\theoremstyle{plain}
\newtheorem{theorem}{Theorem}[section]
\newtheorem{lemma}[theorem]{Lemma}
\newtheorem{corollary}[theorem]{Corollary}
\theoremstyle{definition}
\newtheorem{definition}[theorem]{Definition}
\newtheorem{assumption}[theorem]{Assumption}
\theoremstyle{remark}
\newtheorem{remark}{Remark}
\begin{document}

\title{\Large{A Variance-Reduced and Stabilized Proximal Stochastic Gradient Method with Support Identification Guarantees for Structured Optimization}}

\author{Yutong Dai\thanks{E-mail: yud319@lehigh.edu}}
\affil{Department of Industrial and Systems Engineering, Lehigh University}
\author{Guanyi Wang\thanks{E-mail: guanyi.w@nus.edu.sg}}
\affil{Department of Industrial Systems Engineering and Management, National University of Singapore}
\author{Frank E.~Curtis\thanks{E-mail: frank.e.curtis@lehigh.edu}}
\affil{Department of Industrial and Systems Engineering, Lehigh University}
\author{Daniel P.~Robinson\thanks{E-mail: daniel.p.robinson@lehigh.edu}}
\affil{Department of Industrial and Systems Engineering, Lehigh University}
\titlepage

\maketitle

\begin{abstract}
  This paper introduces a new proximal stochastic gradient method with variance reduction and stabilization for minimizing the sum of a convex stochastic function and a group sparsity-inducing regularization function. Since the method may be viewed as a stabilized version of the recently proposed algorithm \pstorm{}, we call our algorithm \spstorm{}. Our analysis shows that  \spstorm{} has strong convergence results. In particular, we prove an upper bound on the number of iterations required by \spstorm{}  before its  iterates correctly identify (with high probability) an optimal support (i.e., the zero and nonzero structure of an optimal solution).  Most algorithms in the literature with such a support identification property use variance reduction techniques that require either periodically evaluating an \emph{exact} gradient or storing a history of stochastic gradients. Unlike these methods, \spstorm{} achieves variance reduction without requiring either of these, which is advantageous. Moreover, our support-identification result for \spstorm{} shows that, with high probability, an optimal support will be identified correctly in \emph{all} iterations with index above a threshold. We believe that this type of result is new to the literature since the few existing other results prove that the optimal support is identified with high probability at each iteration with a sufficiently large index (meaning that the optimal support might be identified in some iterations, but not in others).  Numerical experiments on regularized logistic loss problems show that  \spstorm{} outperforms existing methods in various metrics that measure how efficiently and robustly iterates of an algorithm identify an optimal support.
\end{abstract}

\section{INTRODUCTION}
We consider the regularized stochastic learning problem
\begin{align}\label{prob:main}
\min_{x\in\R{n}} F(x) := & ~ f(x) + r(x),
\end{align}
where $f(x):=\Embb_{\xi\sim \Pcal}[\ell(x;\xi)]$ with $\xi$ being a random vector following a probability distribution $\Pcal$, $\ell(\cdot,\xi)$ is a smooth convex function almost surely with respect to the distribution of $\xi$, and $r$ is a sparsity-promoting closed convex function with group separable structure, i.e., $r(x) := \sum_{i=1}^{\ngrp}r_i([x]_{g_i})$ for some number of groups $\ngrp > 0$ with $g_i \subseteq\{1,2,\dots, n\}$ for each $i\in\{1,2,\dots,\ngrp\}$,  $\bigcup_{i=1}^{\ngrp} g_i=n$, and $g_i\cap g_j=\emptyset$ for all $i\neq j$. Some commonly used regularization functions have these properties, such as the  weighted $\ell_1$ norm $\sum_{i=1}^n\lambda_i| [x]_i|$ and the weighted non-overlapping \grplone{} norm $\sum_{i=1}^{\ngrp}\lambda_i\norm{[x]_{g_i}}$, where $\{\lambda_i\}$ are positive scalars, $[x]_i$ denotes the $i$th component of $x$, $[x]_{g_i}$ denotes the subvector of $x$ with entries from $g_i$, and $\|\cdot\|$ is the $\ell_2$ norm. Problem~\eqref{prob:main} is general enough to cover a broad class of problems of interest. In particular, when data samples ($\xi$) are available in a streaming manner, problem~\eqref{algo:main} recovers online convex learning~\citep{hazan2016introduction}, and when $\Pcal$ is a uniform distribution over a finite set $\{1,2,\cdots,N\}$, problem~\eqref{prob:main} recovers many  regularized finite-sum problems~\citep{tibshirani1996regression, hastie2009elements}. 

In this work, we are interested in designing an algorithm for solving problem~\eqref{prob:main} that can identify the support of an optimal solution (i.e., the zero and nonzero group structure of an optimal solution) in a finite number of iterations. This can be useful for applications like variable selection in regression problems~\citep{tibshirani1996regression}. 
It can also be used in combination with higher-order methods to design more efficient algorithms. For example, subspace acceleration methods benefit from $F$ being smooth over the variables in the support of an optimal solution, even though $F$ may be non-differentiable over the entire set of variables.  For such methods, once the support is identified, more powerful methods (e.g., truncated Newton's method~\citep{nocedal2006numerical}) can be applied over the variables in the support to accelerate the local convergence rate~\citep{wright2012accelerated,chen2017reduced, curtis2022}.



\subsection{Related Work}

The proximal stochastic gradient method~\citep{rosasco2020convergence} and its variants~\citep{xiao2014proximal,defazio2014saga,wang2019spiderboost,pham2020proxsarah, tran2022hybrid} are perhaps the most popular methods for solving problem~\eqref{prob:main}. Since there is a large body of work on proximal stochastic gradient methods, we will (in alignment with the contributions of our work) focus on methods that have both a convergence guarantee and support identification property. Support identification is also sometimes referred to as manifold identification~\citep{wright2012accelerated, poon2018local, sun19a, lee2012manifold}.

Proximal stochastic gradient-type methods are based on iterations that take the form
\begin{equation}\label{eq:generic.psg}
    y_{k+1} \gets \prox{\alpha_k r}{x_k-\alpha_k d_k} \text{ with } \alpha_k>0,
\end{equation}
where $\prox{\alpha_k r}{\cdot}$ is the proximal operator~\citep[Definition 6.1]{beck2017first} associated with $r$ and step size $\alpha_k > 0$ and $d_k$ is an estimator of $\grad f(x_k)$. If $d_k = \grad \ell(x_k;\xi_k)$ for some realization $\xi_k$ of the random variable $\xi$ and $x_{k+1}=y_{k+1}$, then \eqref{eq:generic.psg} recovers the proximal stochastic gradient method. 

As observed by \citet{poon2018local} and \citet{sun19a}, the proximal stochastic gradient method does not have a support identification property because the error in the stochastic gradient estimator $\epsilon_k=d_k-\grad f(x_k)$ does not vanish as $k$ goes to infinity. One way of overcoming this deficiency is to employ variance reduction techniques.  When $r$ is the weighted $\ell_1$-norm, \citet{sun19a} considers the variance reduction properties of \proxsvrg{}, \saga{}, and \rda{} (i.e, they consider whether $\E{}{\norm{\epsilon_k}} \to 0$),\footnote{These results for \proxsvrg{}, \saga{}, and \rda{} can be found in Table 2, Appendix C.3, and Appendix C.4 of \citep{sun19a}.} and establishes an active-set identification property (in expectation) for these three methods. Specifically, for a given  sufficiently large $k$, they show that the zero groups of $x_k$ agree with the zero groups of the optimal solution (in expectation).  Moreover, when $F$ is strongly convex so that a unique minimizer $x^*$ exists, by knowing the rates at which $\{\E{}{\norm{x_k-x^*}}\}$ and $\{\E{}{\norm{\epsilon_k}}\}$ converge to zero, \citet[Theorem 4]{sun19a} establishes an upper bound, that holds in expectation, on the number of iterations before the zero variables are identified. When $r$ is strongly convex, \citet{lee2012manifold} establishes for \rda{} that, for any given sufficiently large $k$, the support of $x_k$ matches that of $x^*$ with high probability.  (Observe that this means that the supports can match in some such iterations while not in other such iterations.) Later, \citet{huang2022training} extends this result for \rda{} to the non-convex setting by making additional assumptions on the rate of convergence of the iterates and the step sizes.  

A drawback of \proxsvrg{} and \saga{} is that they are only applicable when problem~\eqref{prob:main} has a finite-sum structure, i.e., $\Pcal$ is a uniform distribution over a finite set $\{1,2,\dots,N\}$.  In particular, \proxsvrg{} requires an extra \emph{exact} evaluation of $\grad f$ every epoch,
and \saga{} requires one \emph{exact} evaluation of $\grad f$ in the first iteration and stores a history of stochastic gradients in a matrix of size $N\times n$, where $N$ is the size of the data set and $n$ is the number of optimization variables.  Thus,  \proxsvrg{} and \saga{} are not practical for applications involving streaming data or large~$N$.

The recent work by~\citet{cutkosky2019momentum} and its extension by~\citet{xu2020momentum} consider a new stochastic gradient estimator called {\tt Storm}. When {\tt Storm} is combined with a proper step size selection strategy, it has a variance reduction property, and yet never requires an exact evaluation of $\nabla f$. Our method \spstorm{} draws inspiration from their work 
and introduces an iterate stabilization update to achieve a support identification property 
without having to store a history of stochastic gradients or to compute an exact evaluation of $\nabla f$. The above results are summarized in~\autoref{tab:existingworks}.

\begin{table}[!th]
\centering
\caption{The first column gives the algorithm name.  The second column shows the convergence rate of the iterates with $\rho_{\proxsvrg{}}>0$ and $\rho_{\saga{}}>0$. 
The third column shows the support identification complexity  
where $\Delta^*$ and $\delta^*$ are positive constants (see \eqref{def:Delta^*} and \eqref{def:delta^*}). (The $\Delta^*$ appearing in the result for our method \spstorm{} is a consequence of our accounting for both zero and nonzero groups, whereas the other results are derived based on when only the zero groups are identified.) 
The result for \rda{} is valid when $f$ and $r$ are both strongly convex whereas the result for \spstorm{} only assumes strong convexity of $f$. The fourth column indicates how often a method evaluates an exact gradient, and the fifth column gives the storage costs. The results for \proxsvrg{} and \saga{} hold only when problem~\eqref{prob:main} has a finite-sum structure.}
\begin{tabular}{c|llllll}
\hline
Algorithm   & $\norm{x_k-x^*}^2$                       & Support Identification                    & \# Exact $\nabla \!f$ & Storage     \\ \hline
\proxsvrg{} & $\Ocal\left(\rho_{\proxsvrg{}}^k\right)$ & $\Ocal(\log(1/\delta^*))$ & every epoch     & $\Ocal(n)$  \\
\saga{}     & $\Ocal\left(\rho_{\saga{}}^k\right)$     & $\Ocal(\log(1/\delta^*))$ & once             & $\Ocal(Nn)$ \\
\rda{}      & $\Ocal(\log k/ k)$                       &     $\Ocal\Big(\frac{1}{(\delta^*)^{4}}\Big)$       & never            & $\Ocal(n)$  \\
\rowcolor[HTML]{FFCE93} 
\spstorm{}     & $\Ocal(\log k/ k)$     & $\Ocal \Big(\max\Big\{\frac{1}{(\delta^*)^{4}},\frac{1}{(\Delta^*)^{4}}\Big\} \Big)$   & never            & $\Ocal(n)$  \\ \hline
\end{tabular}
\label{tab:existingworks}
\end{table}

\subsection{Contributions}
This paper makes three main contributions.
\begin{enumerate}
    \item We establish the variance reduction property (with high probability) of the {\tt{Storm}} stochastic gradient estimator (Theorem~\ref{thm:error.in.gradient}), which is missing in~\citet{xu2020momentum}. This is achieved by introducing a simple  stabilization step in line~\ref{line:stablelization} of  Algorithm~\ref{algo:main}, which we show allows for a constant step size to be employed.  This result is interesting in its own right, and the fact that our method allows for a constant step size to be used is a crucial property that we leverage in proving a support identification result.
    \item To the best of our knowledge, \rda{} and our proposed \spstorm{} are the only methods with a support identification property that neither require an exact gradient evaluation nor incur excessive storage costs. Compared with \rda{}, \spstorm{} has a stronger notion of support identification (formalized in Definition~\ref{def:id}). In particular, we show that, with high probability, \emph{all} sufficiently large iterates in \spstorm{} will correctly identify the support of the optimal solution. In contrast, \rda{} proves that each iterate with sufficiently large index will identify the support of the optimal solution with high probability (meaning that the support might be identified correctly in some iterations and not in others).
    We are able to obtain this stronger result as a consequence of the construction of the {\tt Storm} stochastic gradient estimator and the added stabilization step, which 
    allow for a sharp union bound (see Remark~\ref{remark:rda.explain} for additional details).
    \item Our numerical experiments on regularized logistic loss functions with weighted group $\ell_1$-norm regularization show that \spstorm{} outperforms popular methods in metrics that measure how efficiently and robustly iterates of an algorithm identify an optimal support, and in the final objective value achieved.
\end{enumerate}

\subsection{Notation and Preliminaries} 

Throughout the paper we use the following notation. We use $\norm{\cdot}$ to represent the $\ell_2$ norm, $|\Scal|$ to denote the cardinality of a set $\Scal$, and $\N{}_{+}$ and $\R{}_{+}$ to be the sets of positive integers and positive real numbers, respectively. For $N\in\N{}_{+}$, we define $[N] :=\{1,2,\cdots,N\}$.  For $x\in\R{n}$ and index set $\Ical\subseteq[n]$, we use $[x]_{\Ical}\in\R{|\Ical|}$ to denote the subvector of $x$ that corresponds to the elements of $\Ical$. 
For two sequences of non-negative real numbers $\{\phi_k\}_{k \geq 1}$ and $\{\psi_k\}_{k \geq 1}$, we say $\phi_k = \Ocal(\psi_k)$ if and only if there exist constants $k_0\in\N{}_{+}$ and $M\in\R{}_{+}$ such that $\phi_k\leq M\psi_k$ for all $k\geq k_0$.

Let us now formally define what we mean by the support,  a support identification property, and a consistent support identification property for a randomized algorithm.

\begin{definition}[support]
The support of a point $x\in\R{n}$ is denoted by $\Scal(x)$ and defined as
$$
\Scal(x) := \{i \in [\ngrp] ~|~ [x]_{g_i} \neq 0 \},
$$
where $\{g_i\}_{i=1}^{\ngrp}$ forms a non-overlapping partition of $[n]$. We say that $x\in\R{n}$ has optimal support if and only if $\Scal(x) = \Scal(x^*)$ for some solution $x^*\in\R{n}$ to~problem \eqref{prob:main}.  
\end{definition}\label{def:support}


\begin{definition}[support identification property]\label{def:one.iter.id}
A randomized algorithm is said to have the \emph{support identification property} if and only if there exists $K \in \N{}_{+}$ and $p \in (0,1]$ such that, when the algorithm generates a sequence of vectors $\{y_k\}_{k=1}^{\infty}$, one finds for each $k \geq K$ that the event $\{\Scal(y_k)=\Scal(x^*)\}$ occurs with probability at least $p$.
\end{definition}


\begin{definition}[consistent support identification property]\label{def:id}
A randomized algorithm has the \emph{consistent support identification property} if and only if there exist $K \in \N{}_{+}$ and $p \in (0,1]$ so that, when the algorithm generates a sequence of vectors $\{y_k\}_{k=1}^{\infty}$, one finds that the event $\mathcal{E}_{\text{id}} := \bigcap_{k\geq K}^{\infty}\{\Scal(y_k)=\Scal(x^*)\}$ occurs with probability at least $p$.
\end{definition}

While~\citet{lee2012manifold} and~\citet{sun19a} prove the support identification property of their algorithms (see  Definition~\ref{def:one.iter.id}), we prove the stronger consistent support identification property (see Definition~\ref{def:id}) for \spstorm{}.

We next introduce some properties related to the proximal operator. For any $\alpha>0$ and convex function $r$, the proximal operator $\prox{\alpha r}{\cdot}$ is single-valued. We define 
\begin{equation}\label{eq:chi.measure}
\chi(x;\alpha):=\tfrac{1}{\alpha}\norm{\prox{\alpha r}{x-\alpha\grad f(x)}-x},
\end{equation}
which is the norm of the so-called gradient mapping, and is known to serve as an optimality measure for problem~\eqref{prob:main}~\citep[Theorem 10.7 (b)]{beck2017first}.

\section{ALGORITHM}\label{sec.algorithm}

In this section, we present \spstorm{} as   Algorithm~\ref{algo:main} for solving problem~\eqref{prob:main}. At the beginning of iteration $k$, a mini-batch of independently and identically distributed (i.i.d) data samples $\{\xi_{k,i}\}_{i=1}^{m}$ are drawn according to the distribution $\Pcal$, and two stochastic gradients $v_k$ and $u_k$ are formed at the current iterate $x_k$ and the previous iterate $x_{k-1}$ in  \eqref{def.v}--\eqref{line:two.sg}. Then, the {\tt{Storm}} stochastic gradient estimator is constructed in  line~\ref{line:storm}. After performing the proximal stochastic gradient update to obtain $y_k$, a stabilization step is performed in line~\ref{line:stablelization}. As shown in the proof of Theorem~\ref{thm:error.in.gradient}, the stabilization step is critical because it allows for a constant step size strategy to be employed (i.e., $\alpha_k\equiv\ualpha > 0$ for all $k$), which in turn allows us to prove a consistent support identification result for \spstorm{}.

\begin{algorithm}[!th]
\caption{\spstorm{}} 
\label{algo:main}
\begin{algorithmic}[1]
    \State \textbf{Inputs:} Initial point $x_0=x_1\in\R{n}$, size of mini-batch $m \in \mathbb{N}_{+}$,  weight sequence $\{\beta_{k}\}_{k\geq2} \subset (0,1)$, stepsize sequence $\{\alpha_k\} \subset (0,\infty)$, and  parameter $\zeta \in (0,\infty)$.
    \For{$k = 1,2,\dots,$}
        \State Draw $m$ i.i.d samples $\{\xi_{k1}, \cdots, \xi_{km}\}$ w.r.t.~$\Pcal$.
          \State Set
          \vspace{-10pt}
         \begin{equation}\label{def.v}
            v_k \gets \tfrac{1}{m}\sum_{i=1}^m\grad \ell(x_{k};\xi_{ki}).
          \end{equation}
          \vspace{-15pt}
        \If{$k=1$}
          \State Set $d_k \gets v_k$.
        \Else
          \State Set
          \vspace{-15pt}
          \begin{equation}\label{line:two.sg}
            u_k \gets \tfrac{1}{m}\sum_{i=1}^m\grad \ell(x_{k-1};\xi_{ki}).
          \end{equation}
          \vspace{-15pt}
          \State Set $d_{k} \gets v_k + (1-\beta_k)(d_{k-1}-u_{k})$. \label{line:storm}
        \EndIf
        \State Compute $y_{k} \gets \prox{\alpha_{k} r}{x_{k}-\alpha_{k}d_{k}}$.\label{line:yk}
        \State Set $ x_{k+1} \gets x_k + \zeta\beta_k(y_{k}- x_{k})$. \label{line:stablelization}
    \EndFor
\end{algorithmic}
\end{algorithm}

\section{ANALYSIS} \label{sec.analysis}
 
 We begin this section by introducing the assumptions under which our convergence analysis is performed.

\subsection{Assumptions} 

Our first assumption concerns strong convexity of $f$ and Lipschitz continuity of the gradient of the loss function $\ell$. 
 
\begin{assumption}\label{ass:generic.assumptions}
The following hold: 
\begin{enumerate}
    \item $f$ is $\mu_f$-strongly convex over $\R{n}$ and $r_i$ is convex and closed over $\R{n}$ for all $i\in[\ngrp]$.
    \item There exists a constant $L_g>0$ such that, for any $(x,y)\in\R{n}\times\R{n}$ and any $\xi\sim \Pcal$, it holds that
    $$\norm{\grad \ell(x,\xi)-\grad \ell(y,\xi)}\leq L_g\norm{x-y},
    $$
    i.e., $\grad f$ is $L_g$-Lipschitz continuous.\label{ass:generic.assumptions.2}
\end{enumerate}
\end{assumption}
The strong convexity assumption on  $f$ is for deriving a complexity result for consistent support identification. This assumption can be relaxed to $f$ being convex if, similar to \citet{sun19a}, we instead assume that there exists a decreasing sequence $\{\nu_k\}$ such that $\Pmbb(\{\norm{x_k-x^*}\leq \nu_k\})=1$.  Under this assumption, we can also prove a consistent support identification result for \spstorm{}, although without an explicit upper bound on $K$ in Definition~\ref{def:id}---whereas under Assumption~\ref{ass:generic.assumptions} we provide such an upper bound. The smoothness assumption on $\ell(\cdot,\xi)$ is standard~\citep{cutkosky2019momentum,xu2020momentum}.

For our next assumption, we refer to the filtration---defined by the initial point and sequence of mini-batch stochastic gradients---corresponding to the stochastic process generated by the algorithm.  Denoting $\Fcal_1 := \sigma(x_1)$ and, for all $k \geq 2$, denoting $\Fcal_k$ as the $\sigma$-algebra generated by the random variables $\{\{\Xi_{1,i}\}_{i=1}^m,\dots,\{\Xi_{(k-1),i}\}_{i=1}^m\}$ (of which $\{\{\xi_{1,i}\}_{i=1}^{m},\dots,\{\xi_{(k-1),i}\}_{i=1}^{m}\}$ is a realization), it follows that $\{\Fcal_k\}$ is this filtration of interest. Recall that the distribution $\Pcal$ of $\xi$ is independent of the filtration.
\begin{assumption}\label{ass:proxstorm.varinace.reduced}
The following hold:
\begin{enumerate}
    \item For all $k \geq 1$, $\E{{\xi\sim\Pcal}}{\grad \ell(x_{k};\xi)~|~\Fcal_{k} }=\grad f(x_k)$.\label{ass:proxstorm.varinace.reduced.1}
    \item {There exists $G_r\in\R{}_{+}$ such that, for all $k \geq 1$, $\Pmbb_{} \{\|g_r\|_2 \leq G_r,\ \forall g_r \in \partial r(x_k)\} = 1$. \label{ass:proxstorm.varinace.reduced.3} }
    \item There exists $\sigma\in\R{}_{+}$ such that, for all $k \geq 1$, $\Pmbb_{\xi\sim\Pcal}\{{\norm{\grad \ell(x_k,\xi)-\grad f(x_k)}\leq \sigma}~|~\Fcal_k\}=1$.\label{ass:proxstorm.varinace.reduced.4}
    \item There exists $G_d\in\R{}_{+}$ such that, for all $k\geq 1$, $\Pmbb_{\xi\sim\Pcal}\{{\norm{d_k}\leq G_d}~|~\Fcal_k\}=1$.\label{ass:proxstorm.varinace.reduced.5}
\end{enumerate}
\end{assumption}

Assumption~\ref{ass:proxstorm.varinace.reduced}(\ref{ass:proxstorm.varinace.reduced.1}) ensures that the stochastic gradient $\grad \ell(x; \xi)$ is an unbiased estimator of the gradient $\grad f(x)$ for all $x\in\R{n}$. Assumption~\ref{ass:proxstorm.varinace.reduced}(\ref{ass:proxstorm.varinace.reduced.3}) provides a constant upper bound on the norm of an element of $\partial r(x)$ for all $x\in\R{n}$, which exists when $r$ is the weighted $\ell_1$-norm or weighted group $\ell_1$-norm, for example. Assumption~\ref{ass:proxstorm.varinace.reduced}(\ref{ass:proxstorm.varinace.reduced.4}) guarantees (almost surely) a bound on the difference between $\grad \ell(x_k; \xi)$ and $\grad f(x_k)$ for all $k\in \N{}_{+}$. This assumption is implied by the uniform bound assumption on $\grad \ell(x;\xi)$ used in~\citep[Assumption 4]{liu2022meta}. 
It may be possible to relax Assumption~\ref{ass:proxstorm.varinace.reduced}(\ref{ass:proxstorm.varinace.reduced.4}) by assuming that the stochastic gradient error has a sub-exponential tail, e.g., \citet{na2022hessian}, which we leave as future work. Assumption~\ref{ass:proxstorm.varinace.reduced}(\ref{ass:proxstorm.varinace.reduced.5}) is  implied by the following two, perhaps more natural, assumptions: (i) There exists a constant $c_e>0$ such that, for all $k$, it holds that $\Pmbb\{\norm{d_k-\grad f(x_k)}\leq c_e | \Fcal_k\} = 1$, i.e., the error in the stochastic gradient estimator $d_k$ is almost surely bounded; and (ii) There exists a constant $c_{\alpha}$ such that, for a given $\alpha>0$ and all $k\geq 1$, it holds that  $\Pmbb\{\chi(x_k;\alpha)\leq c_{\alpha} | \Fcal_k\}=1$ (also see~\eqref{eq:chi.measure}), i.e., the optimality measure is almost surely bounded. Note that Assumption~\ref{ass:proxstorm.varinace.reduced}(\ref{ass:proxstorm.varinace.reduced.5}) is slightly weaker than a bounded iterates assumption, which is also made in \rda{}~\citep{lee2012manifold}. A proof that Assumption~\ref{ass:proxstorm.varinace.reduced}(\ref{ass:proxstorm.varinace.reduced.5}) follows from (i) and (ii) can be found in Appendix~\ref{lemma:ass.implication}.

Our last assumption is on the parameters of  Algorithm~\ref{algo:main}.
\begin{assumption}\label{ass:algo.choice}
The sequences $\{\beta_k\}$ and $\{\alpha_k\}$ in Algorithm~\ref{algo:main} are chosen, with $c>1$ and $\ualpha\in (0,\infty)$, to satisfy $\beta_k= \min\{ 1/2, c/(k+1) \}$ and $\alpha_{k} \equiv \ualpha$ for all $k \geq 1$.
\end{assumption}
The constant $1/2$ appearing in the definition of $\beta_k$ in Assumption~\ref{ass:algo.choice} can be replaced by any constant between zero and one; the choice of $1/2$ is to simplify expressions appearing throughout our analysis.

\subsection{Convergence Analysis}

The first result establishes the variance reduction property of the {\tt Storm} stochastic gradient estimator.
\begin{theorem} \label{thm:error.in.gradient}

Let Assumption~\ref{ass:generic.assumptions}--Assumption~\ref{ass:algo.choice} hold, let $\epsilon_k=d_k-\grad f(x_k)$ for all $k \in \N{}_{+}$, and define $\underline{k}=\lceil (2c) - 1\rceil$.  Then, for any $k \geq \underline{k}$ and any $\eta_k \in (0,1)$, the event $\Ecal_k := \{\norm{\epsilon_k} \leq U(k)\}$ holds with probability at least $1 - \eta_k$, where for some constant $C\in\R{}_{+}$ independent of $k$, one defines
\begin{align*}
    U(k) =  & ~ C\big( \sigma + L_g  (G_r+G_d) \zeta \ualpha \big)  \cdot \max\left\{ \left( \frac{\underline{k} + 1}{k + 2}\right)^{c}, ~ \frac{c}{\sqrt{k+2}} \right\} \sqrt{\log\frac{2}{\eta_k}}.
\end{align*} 

\end{theorem}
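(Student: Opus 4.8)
The plan is to rewrite the error $\epsilon_k = d_k - \grad f(x_k)$ as a weighted sum of conditionally mean‑zero increments and then apply a vector‑valued martingale concentration inequality. Substituting the {\tt Storm} recursion from line~\ref{line:storm} into the definition of $\epsilon_k$ and adding and subtracting $\grad f(x_{k-1})$ yields, for $k\geq 2$,
\[
\epsilon_k = (1-\beta_k)\epsilon_{k-1} + \theta_k, \qquad \theta_k := \beta_k a_k + (1-\beta_k)(a_k - b_k),
\]
where $a_k := v_k - \grad f(x_k)$ and $b_k := u_k - \grad f(x_{k-1})$, with $\epsilon_1 = a_1 =: \theta_1$. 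By Assumption~\ref{ass:proxstorm.varinace.reduced}(\ref{ass:proxstorm.varinace.reduced.1}) and the fact that $x_{k-1},x_k$ are $\Fcal_k$‑measurable while the batch $\{\xi_{ki}\}$ is drawn fresh, both $a_k$ and $b_k$ are conditionally mean zero, so $\Embb[\theta_k\mid\Fcal_k]=0$. Unrolling the recursion gives $\epsilon_k = \sum_{i=1}^k \Pi_{i,k}\,\theta_i$ with the \emph{deterministic} weights $\Pi_{i,k} := \prod_{j=i+1}^k(1-\beta_j)$ and $\Pi_{k,k}:=1$; since the $\beta_j$ are deterministic (Assumption~\ref{ass:algo.choice}), for each fixed $k$ the terms $\{\Pi_{i,k}\theta_i\}_{i=1}^k$ form a martingale difference sequence adapted to $\{\Fcal_i\}$.

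The second step is an almost‑sure increment bound $\norm{\theta_i}\leq c_i$, and this is where the stabilization step and constant step size are essential. Assumption~\ref{ass:proxstorm.varinace.reduced}(\ref{ass:proxstorm.varinace.reduced.4}) gives $\norm{a_i}\leq\sigma$, while the $L_g$‑Lipschitz continuity of each $\grad\ell(\cdot;\xi)$ (hence of $\grad f$) bounds the variance‑reduced term by $\norm{a_i - b_i}\leq 2L_g\norm{x_i - x_{i-1}}$. The crucial estimate is on the displacement: the optimality condition for the prox step in line~\ref{line:yk} gives $x_{i-1}-y_{i-1} = \ualpha(d_{i-1}+g)$ for some $g\in\partial r(y_{i-1})$, so Assumptions~\ref{ass:proxstorm.varinace.reduced}(\ref{ass:proxstorm.varinace.reduced.3}) and (\ref{ass:proxstorm.varinace.reduced.5}) yield $\norm{y_{i-1}-x_{i-1}}\leq\ualpha(G_r+G_d)$, and the stabilization update in line~\ref{line:stablelization} then gives $\norm{x_i - x_{i-1}} = \zeta\beta_{i-1}\norm{y_{i-1}-x_{i-1}}\leq \zeta\ualpha(G_r+G_d)\beta_{i-1}$. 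Combining these with $\beta_{i-1}\leq 2\beta_i$ produces $c_i \leq K_0\,\beta_i\,(\sigma + L_g(G_r+G_d)\zeta\ualpha)$ for an absolute constant $K_0$. The key point is that the stabilization factor $\zeta\beta_{i-1}$ forces the displacement, and hence $\theta_i$, to decay like $\beta_i$; without it the displacement is only bounded by the constant $\ualpha(G_r+G_d)$ and no variance reduction follows.

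Finally, I would invoke a dimension‑free Azuma--Hoeffding/Pinelis inequality for Hilbert‑space martingales with bounded increments, giving $\Pmbb(\norm{\epsilon_k}\geq t)\leq 2\exp(-t^2/(2V_k))$ with variance proxy $V_k := \sum_{i=1}^k \Pi_{i,k}^2 c_i^2 = K_0^2(\sigma+L_g(G_r+G_d)\zeta\ualpha)^2\, S_k$, where $S_k := \sum_{i=1}^k \Pi_{i,k}^2\beta_i^2$. Setting the right‑hand side equal to $\eta_k$ gives $t = \sqrt{2V_k\log(2/\eta_k)}$, which already produces the $\sqrt{\log(2/\eta_k)}$ factor and the stated prefactor; the constant $C$ then collects $K_0$, the $\sqrt{2}$, and the constants from estimating $S_k$. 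It remains to show $\sqrt{S_k}\lesssim \max\{((\underline{k}+1)/(k+2))^c,\ c/\sqrt{k+2}\}$, which I expect to be the main technical obstacle. For $k\geq\underline{k}=\lceil 2c-1\rceil$ I would use the Gamma‑function (telescoping) estimate $\Pi_{i,k}\asymp ((i+2)/(k+2))^c$ in the tail regime $\beta_j=c/(j+1)$, and split $S_k$ into the initial segment $i<\underline{k}$ (where $\beta_i=\tfrac12$), contributing the boundary term $\asymp((\underline{k}+1)/(k+2))^{2c}$, and the bulk segment, where $\sum_i (i+2)^{2c}(i+1)^{-2}(k+2)^{-2c}\asymp c^2/(k+2)$ since $2c-1>0$; taking square roots and bounding a sum of two nonnegative terms by twice their maximum delivers the $\max$ expression. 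The two genuinely delicate points are therefore (i) obtaining the \emph{almost‑sure} (not merely in‑expectation) decay of the displacement via the stabilization step, and (ii) the sharp evaluation of $S_k$ matching both branches of the maximum.
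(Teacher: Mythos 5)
Your proposal is correct and follows essentially the same route as the paper's proof: your weighted increments $\Pi_{i,k}\theta_i$ coincide exactly with the paper's martingale differences $e_{ki}$ (Lemma~\ref{lemma:giant}), your displacement bound via the prox optimality condition plus the stabilization step, the split of the variance proxy into the $\beta_i=\tfrac12$ segment and the $\beta_i=c/(i+1)$ tail (Lemmas~\ref{lemma:prod.1.minus.beta} and~\ref{lemma:sum.eki.norm}), and the final dimension-free Azuma--Hoeffding/Pinelis bound with $t=\sqrt{2V_k\log(2/\eta_k)}$ all mirror the paper's argument. The only cosmetic difference is that you normalize the increment bound to $\beta_i$ via $\beta_{i-1}\leq 2\beta_i$ while the paper normalizes to $\beta_{i-1}$ via $\beta_i\prod_{j=i+1}^k(1-\beta_j)\leq 2\beta_{i-1}\prod_{j=i}^k(1-\beta_j)$, which is equivalent up to a constant.
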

The proof of Theorem~\ref{thm:error.in.gradient} is presented in Appendix~\ref{app:error.in.gradient}. 

\begin{remark}
The upper bound $U(k)$ in Theorem~\ref{thm:error.in.gradient} is independent of the mini-batch size $m$. This is due to the 
bound in Assumption~\ref{ass:proxstorm.varinace.reduced}(\ref{ass:proxstorm.varinace.reduced.4}) that holds almost surely. 
\end{remark}
\begin{remark}\label{rem:ek-error}
By setting $\eta_k =\frac{\eta_0}{k^2}$ for all $k \in \N{}_{+}$ with constant $\eta_0 \in (0, 6 / \pi^2)$, one obtains  $U(k) = \Ocal(\max\{\sqrt{\log k}/k^c, ~ \sqrt{\log k / k}\} )$ so that
$\{\norm{\epsilon_k}\} \to 0$ with high probability. This is formalized in the next result.
\end{remark}
\begin{corollary}\label{cor:variance.all.iters}
Let Assumption~\ref{ass:generic.assumptions}--Assumption~\ref{ass:algo.choice} hold, $\eta_k =\frac{\eta_0}{k^2}$ for all $k\geq 1$ with $\eta_0 \in (0, 6 / \pi^2)$, and $\Ecal_k$ be defined as in Theorem~\ref{thm:error.in.gradient}.
Then, the event $\Ecal := \bigcap_{k\geq\underline{k}}^{\infty} \mathcal{E}_k$ happens with probability at least $1 - \frac{\eta_0 \pi^2}{6}$.
\end{corollary}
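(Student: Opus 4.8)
The plan is to derive the result directly from Theorem~\ref{thm:error.in.gradient} via a union bound (Boole's inequality) applied to the complementary events. First I would record that, by Theorem~\ref{thm:error.in.gradient} applied with the choice $\eta_k = \eta_0/k^2$, for each fixed $k \geq \underline{k}$ the event $\mathcal{E}_k = \{\norm{\epsilon_k} \leq U(k)\}$ holds with probability at least $1 - \eta_k$; equivalently, its complement satisfies $\Pmbb(\mathcal{E}_k^c) \leq \eta_k = \eta_0/k^2$. Before invoking the theorem I would verify that each $\eta_k$ lies in the admissible range $(0,1)$: since $\eta_0 \in (0, 6/\pi^2)$ and $6/\pi^2 < 1$, we have $0 < \eta_k = \eta_0/k^2 \leq \eta_0 < 1$ for all $k \geq 1$, so the hypotheses of Theorem~\ref{thm:error.in.gradient} are met for every index in the intersection.

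Next I would pass to complements and apply Boole's inequality. Writing $\mathcal{E}^c = \bigcup_{k \geq \underline{k}} \mathcal{E}_k^c$, subadditivity of probability gives
\[
\Pmbb(\mathcal{E}^c) = \Pmbb\Big( \bigcup_{k \geq \underline{k}} \mathcal{E}_k^c \Big) \leq \sum_{k \geq \underline{k}} \Pmbb(\mathcal{E}_k^c) \leq \sum_{k \geq \underline{k}} \frac{\eta_0}{k^2}.
\]
Because all summands are nonnegative, I would bound the tail series by the full series and use the Basel identity $\sum_{k=1}^{\infty} 1/k^2 = \pi^2/6$ to get $\sum_{k \geq \underline{k}} \eta_0/k^2 \leq \eta_0 \sum_{k=1}^{\infty} 1/k^2 = \eta_0 \pi^2/6$. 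Taking complements once more yields $\Pmbb(\mathcal{E}) = 1 - \Pmbb(\mathcal{E}^c) \geq 1 - \eta_0 \pi^2/6$, which is the claimed bound.

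I do not expect a substantive obstacle; the corollary is essentially an immediate consequence of Theorem~\ref{thm:error.in.gradient} combined with the summability of $\{\eta_k\}$. The only points meriting care are conceptual rather than computational: (i) Boole's inequality is valid here even though the events $\{\mathcal{E}_k\}$ are statistically dependent through the shared filtration $\{\Fcal_k\}$, since subadditivity requires no independence; (ii) the constraint $\eta_0 < 6/\pi^2$ is exactly what makes the resulting lower bound $1 - \eta_0 \pi^2/6$ a nontrivial (strictly positive) probability; and (iii) the fact that the convergent series $\sum 1/k^2$ dominates the per-iteration failure probabilities is what allows the high-probability guarantee to hold \emph{simultaneously} over all $k \geq \underline{k}$ rather than merely at each such $k$ individually. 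This simultaneous control over the whole tail of the sequence is precisely the mechanism that later upgrades the support identification property into the stronger \emph{consistent} support identification property of Definition~\ref{def:id}.
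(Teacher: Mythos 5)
Your proposal is correct and follows essentially the same route as the paper's proof: De Morgan's law to pass to the union of complements, the union bound, and the Basel identity $\sum_{k=1}^{\infty} 1/k^2 = \pi^2/6$ applied after extending the tail sum to the full series. Your added checks---that each $\eta_k = \eta_0/k^2$ lies in $(0,1)$ and that Boole's inequality needs no independence---are sound observations the paper leaves implicit, but they do not change the argument.
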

The proof of Corollary~\ref{cor:variance.all.iters} can be found in Appendix~\ref{app:proof.for.corllary}.

Next, we establish the rate of convergence of the iterate sequence $\{x_k\}$ with high probability (for small $\eta_0$). 

\begin{theorem}\label{thm:iterate.complexity}

Let $\ualpha = \mu_f / L_g^2$, $\zeta \in (0, 2)$, $\theta\geq 2$, $c = (2\theta L_g^2) / (\zeta \mu_f^2) > 2$,  and $\underline{k}=\lceil 2c-1\rceil$. Set $\eta_k = \eta_0 / k^2$ for all $k\geq 1$ with $\eta_0 \in (0, 6 / \pi^2)$. Then, under Assumption~\ref{ass:generic.assumptions}--Assumption~\ref{ass:algo.choice}, there exists a constant $C_3\in\R{}_{+}$ independent of $k$, such that the event $\Ecal^x_{k}:=\left\{\norm{x_{k}-x^*}^2 \leq {\bar c}_1 \frac{\norm{x_{\underline{k}}-x^*}^2}{k^\theta} + {\bar c}_2\cdot\frac{\log\frac{2k}{\eta_0}}{k}\right\}$ with ${\bar c}_1:=(\underline{k} + 2)^\theta$ and ${\bar c}_2:=C_3\zeta\left(\frac{\mu_f^2}{L_g^4} + \frac{2}{L_g^2}\left(1+\frac{\mu_f}{L_g}\right)^2\right)(\sigma + L_g(G_r + G_d)\zeta \ualpha)^2$ satisfies
$$
\P{\bigcap_{k\geq\underline{k}}^{\infty}\Ecal^x_{k}} \geq 1 - \eta_0 \pi^2 / 6 > 0.
$$
\end{theorem}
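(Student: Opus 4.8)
The plan is to reduce the probabilistic statement to a pathwise recursion that holds on the high-probability event $\Ecal=\bigcap_{k\geq\underline{k}}\Ecal_k$ supplied by Corollary~\ref{cor:variance.all.iters}. On $\Ecal$ we have $\norm{\epsilon_k}\leq U(k)$ simultaneously for all $k\geq\underline{k}$, and — crucially — once $\eta_k=\eta_0/k^2$ is substituted, $U(k)^2=\Ocal(\log k/k)$, because for large $k$ the branch $c/\sqrt{k+2}$ of the maximum dominates the geometric branch $((\underline{k}+1)/(k+2))^{c}$ (as $c>2>1/2$) and $\log(2/\eta_k)=\Ocal(\log k)$. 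Since every bound below is deterministic given $\Ecal$, on every sample path in $\Ecal$ the recursion forces the event $\Ecal^x_k$ for all $k\geq\underline{k}$, so $\Ecal\subseteq\bigcap_{k\geq\underline{k}}\Ecal^x_k$ and hence $\P{\bigcap_{k\geq\underline{k}}\Ecal^x_k}\geq\P{\Ecal}\geq 1-\eta_0\pi^2/6$. This is the only place probability enters.

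First I would set $a_k:=\norm{x_k-x^*}$ and build a one-step contraction. Writing $\hat y_k:=\prox{\ualpha r}{x_k-\ualpha\grad f(x_k)}$ and using that $x^*$ is a fixed point of $\prox{\ualpha r}{\cdot-\ualpha\grad f(\cdot)}$, nonexpansiveness of the proximal operator together with $\mu_f$-strong convexity and $L_g$-Lipschitzness of $\grad f$ gives $\norm{\hat y_k-x^*}\leq\rho\,a_k$ with $\rho:=\sqrt{1-\mu_f^2/L_g^2}$ for the choice $\ualpha=\mu_f/L_g^2$; a second application of nonexpansiveness gives $\norm{y_k-\hat y_k}\leq\ualpha\norm{\epsilon_k}$, so $\norm{y_k-x^*}\leq\rho\,a_k+\ualpha\norm{\epsilon_k}$. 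Because $\zeta\beta_k\leq\zeta/2<1$ (as $\beta_k\leq 1/2$ and $\zeta<2$), the stabilization step in line~\ref{line:stablelization} writes $x_{k+1}-x^*$ as a convex combination of $x_k-x^*$ and $y_k-x^*$, yielding
$$a_{k+1}\leq\left(1-\zeta\beta_k(1-\rho)\right)a_k+\zeta\beta_k\ualpha\norm{\epsilon_k}.$$
For $k\geq\underline{k}=\lceil 2c-1\rceil$ one has $\beta_k=c/(k+1)$, so this is a time-varying linear recursion with contraction coefficient $\omega/(k+1)$, where $\omega:=\zeta c(1-\rho)$.

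The key quantitative check is that the prescribed $c=2\theta L_g^2/(\zeta\mu_f^2)$ forces $\omega\geq\theta$: substituting gives $\omega=2\theta L_g^2(1-\rho)/\mu_f^2$, and the elementary inequality $1-\sqrt{1-t}\geq t/2$ with $t=\mu_f^2/L_g^2$ shows $1-\rho\geq\mu_f^2/(2L_g^2)$, hence $\omega\geq\theta$. I would then unroll the recursion as
$$a_k\leq\Big(\prod_{j=\underline{k}}^{k-1}\big(1-\tfrac{\omega}{j+1}\big)\Big)a_{\underline{k}}+\sum_{j=\underline{k}}^{k-1}\Big(\prod_{i=j+1}^{k-1}\big(1-\tfrac{\omega}{i+1}\big)\Big)\frac{\zeta c\,\ualpha}{j+1}\norm{\epsilon_j},$$
estimating the products by $\prod_{i=j+1}^{k-1}(1-\omega/(i+1))\leq((j+1)/k)^{\omega}$ (from $1-x\leq e^{-x}$ and a harmonic-sum bound). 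The squared first term then satisfies $((\underline{k}+1)/(k+1))^{2\omega}a_{\underline{k}}^2\leq\bar{c}_1\,a_{\underline{k}}^2/k^{\theta}$ since $2\omega\geq\theta$, producing the $\bar{c}_1\norm{x_{\underline{k}}-x^*}^2/k^{\theta}$ term with $\bar{c}_1=(\underline{k}+2)^{\theta}$. For the noise term, substituting $\norm{\epsilon_j}\leq U(j)=\Ocal(\sqrt{\log j/j})$ on $\Ecal$ reduces the sum to $k^{-\omega}\sum_j j^{\omega-3/2}\sqrt{\log j}$, which (since $\omega>1/2$) obeys $\sum_j j^{\omega-3/2}\sqrt{\log j}\asymp k^{\omega-1/2}\sqrt{\log k}$, giving an $\Ocal(\sqrt{\log k/k})$ bound on $a_k$ minus its initial part; squaring via $(p+q)^2\leq 2p^2+2q^2$ yields the $\bar{c}_2\log(2k/\eta_0)/k$ term, where the factor $2$, the $\ualpha^2=\mu_f^2/L_g^4$ contribution, and the $(1+\ualpha L_g)^2=(1+\mu_f/L_g)^2$ contribution all emerge from the coefficient $\zeta c\,\ualpha/(1-\rho)$ and the explicit constant $(\sigma+L_g(G_r+G_d)\zeta\ualpha)$ appearing in $U(k)$, collected into a single $k$-independent $C_3$.

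The main obstacle is the careful execution of this unrolling: one must bound the time-varying products $\prod(1-\omega/(i+1))$ uniformly in $k$, control the logarithmically weighted power sum $\sum_j j^{\omega-3/2}\sqrt{\log j}$ with an explicit constant (handling also the finitely many early indices where the geometric branch of $U(j)$ dominates), and keep every constant independent of $k$ so they fold into $C_3$, $\bar{c}_1$, and $\bar{c}_2$. Everything after the gradient-error control is deterministic on $\Ecal$, so the probabilistic conclusion follows immediately from Corollary~\ref{cor:variance.all.iters} exactly as described in the first paragraph.
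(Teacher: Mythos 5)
Your proposal is sound in its essentials and reaches the theorem by a genuinely different route than the paper. The paper works in the squared distance $s_k^2=\norm{x_k-x^*}^2$ from the start: it expands $\norm{y_k-x^*}^2$ via inner products, applies strong convexity to the cross term $(x_k-x^*)^T(\grad f(x_k)-\grad f(x^*))$, uses convexity of $\norm{\cdot}^2$ for the stabilization step, and then needs a Young's-inequality step with a tuned parameter $\rho$ to absorb the mixed term $U(k)\,s_k$, arriving at $s_{k+1}^2\leq(1-\gamma_k)s_k^2+c_3\beta_kU(k)^2$ with $\gamma_k=\tfrac{\zeta\mu_f^2}{2L_g^2}\beta_k$, whose exponent $\zeta\mu_f^2c/(2L_g^2)$ equals $\theta$ exactly. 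You instead work with the unsquared distance $a_k$, split the error through the auxiliary exact-gradient point $\hat y_k=\prox{\ualpha r}{x_k-\ualpha\grad f(x_k)}$ (contraction factor $\rho=\sqrt{1-\mu_f^2/L_g^2}$ plus nonexpansiveness for the $\ualpha\norm{\epsilon_k}$ term), and get a clean linear recursion $a_{k+1}\leq(1-\tfrac{\omega}{k+1})a_k+\tfrac{\zeta c\ualpha}{k+1}\norm{\epsilon_k}$ with $\omega=\zeta c(1-\rho)\geq\theta$; your verification of $\omega\geq\theta$ via $1-\sqrt{1-t}\geq t/2$ is correct, and your final probability step (event inclusion $\Ecal\subseteq\bigcap_{k\geq\underline{k}}\Ecal^x_k$ plus monotonicity) is a cleaner rendering of the paper's Bayes-rule manipulation. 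What your route buys is transparency: no cross-term absorption, and the contraction rate is explicit. What the paper's route buys is that the bound is already on the squared quantity appearing in $\Ecal^x_k$, so no squaring is needed at the end.

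That last point is the one wrinkle you must fix. Your final step $(p+q)^2\leq 2p^2+2q^2$ turns the initial-condition term into (roughly) $2(\underline{k}+2)^\theta\norm{x_{\underline{k}}-x^*}^2/k^\theta$, and the factor $2$ cannot be hidden: $\bar c_1=(\underline{k}+2)^\theta$ is pinned in the statement with no free constant, and one can check that $2\left(\frac{\underline{k}+1}{k+1}\right)^{2\omega}\leq\left(\frac{\underline{k}+2}{k}\right)^\theta$ fails for $k$ near $\underline{k}$ once $\underline{k}$ is moderately large (the slack $((\underline{k}+2)/(\underline{k}+1))^\theta$ tends to $1$, not $2$). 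The repair is routine: use the weighted split $(p+q)^2\leq(1+\lambda)p^2+(1+\lambda^{-1})q^2$ with $\lambda=\theta/(\underline{k}+1)$, noting by Bernoulli that $(1+\tfrac{1}{\underline{k}+1})^\theta\geq 1+\lambda$, so the first term stays below $\bar c_1\norm{x_{\underline{k}}-x^*}^2/k^\theta$, while the factor $1+\lambda^{-1}=1+(\underline{k}+1)/\theta$ is $k$-independent and folds into $C_3$ (which, as in the paper's own proof, is free to depend on $c$, $\theta$, and $\underline{k}$). With that adjustment, together with the careful bookkeeping you already flag for the early indices where the geometric branch of $U(j)$ dominates, your argument delivers the theorem with the stated constants.
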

The proof of Theorem~\ref{thm:iterate.complexity} is presented in Appendix~\ref{app:iterate.complexity}.

\begin{remark}\label{rem:xk_rate}
Theorem~\ref{thm:iterate.complexity} provides a $\Ocal(\sqrt{\log k/k})$ convergence rate for $\norm{x_k - x^*}$ for all $k\geq\underline{k}$ with high probability. It is worth noting that the constant $\underline{k}$ depends on the square of the condition number $L_g/\mu_f$. We also note that the first term $\cbar_1 \| x_{\underline{k}}-x^*\|^2/k^\theta$ can be made to converge to zero arbitrarily fast by choosing $\theta$ as large as desired, although this results in larger $\underline{k}$.  It is the second term $\cbar_2 \log(\frac{2k}{\eta_0}) / k$ that dictates the overall convergence rate of the iterates.  This rate of convergence is obtained by using the rate at which the error in the {\tt Storm} stochastic gradient estimator converges to zero (see Remark~\ref{rem:ek-error}). 
\end{remark}

\begin{remark}
Theorem~\ref{thm:iterate.complexity}  establishes a sub-linear rate of convergence for the iterates with high probability for strongly convex loss functions. However, it remains unknown whether there exists a method that has a linear convergence rate for strongly convex functions and avoids huge storage and exact gradient evaluations.   
\end{remark}

\subsection{Support Identification} \label{sec.analysis.id.support}
In this section, we restrict our attention to $r$ being the weighted non-overlapping group $\ell_1$ regularizer, i.e., $r(x)
=\sum_{i=1}^{\ngrp}\lambda_i\norm{[x]_{g_i}}$ with $\ngrp > 0$, $\{g_i\}\subseteq[n]$ for each $i\in[\ngrp]$,  $\bigcup_{i=1}^{\ngrp} g_i=[n]$, $g_i\cap g_j=\emptyset$ for all $i\neq j$, and $\{\lambda_i\}_{i=1}^{\ngrp}$ strictly positive group weights.

Let us now introduce quantities that are crucial for establishing our support identification result. Specifically, let $x^*$ be the unique solution to problem \eqref{prob:main}. Define 
\begin{align}
    \Delta &:= 
    \begin{cases}
    \displaystyle{\min_{i\in\Scal(x^*)}} \norm{[x^*]_{g_i}}  & \text{if $\Scal(x^*)\neq\emptyset$,} \\
    1 & \text{if $\Scal(x^*) = \emptyset$,}\\
    \end{cases}
    \nonumber
    \\[0.3em]
    \Delta^* &:= \min\{1,\Delta\}, \label{def:Delta^*}
    \\[0.3em]
    \delta_{\min} &:= 
    \begin{cases}
    {\displaystyle\min_{ i\not \in \supp{x^*}}}\{\lambda_i - \norm{\grad_{g_i} f(x^*)}\} &\text{ if } \supp{x^*} \subsetneqq [\ngrp],\\
    1 &\text{ if }  \supp{x^*} = [\ngrp],
    \end{cases}\nonumber
    \\[0.3em]
    \delta^* &:=\min\{\delta_{\min}, 1\}. \label{def:delta^*}
\end{align}
Geometrically, $\Delta$ captures the minimum $\ell_2$-norm of the groups that are non-zero at $x^*$, taking into account the possibility that $\supp{x^*}$ is empty. The definition of $\delta_{\min}$ measures the minimum distance between $\lambda_i$ and the corresponding optimal dual variables (see~\eqref{prob:prox.dual}) for groups not in $\Scal(x^*)$. 
To see this, without loss of generality, suppose that $\supp{x^*} \subsetneqq [\ngrp]$. 
For any $\alpha>0$ define $z^* := x^*-\alpha\grad f(x^*)$ and then  consider the proximal problem
\begin{align}
\min_{x\in \R{n}} \ \phi_p(x;x^*,\alpha) 
:= \tfrac{1}{2\alpha}\|x-z^*\|^2 + r(x)\label{prob:prox.primal}
\end{align}
and its dual problem
\begin{equation}\label{prob:prox.dual}
\max_{\omega\in \R{n}}
\ \phi_d(\omega;x^*,\alpha):=-\left(\tfrac{\alpha}{2}\|\omega\|_2^2+ \omega^T z^* \right)
\ \ \text{s.t.} \ \  r_*(\omega)\leq 1
\end{equation}
where
$r_*(\omega) = \max_{i\in[\ngrp]} \frac{\|[\omega]_{g_i}\|}{\lambda_i}$ is the dual norm of the weighted group $\ell_1$ norm.  
It can be seen that $x^*$ is the optimal solution to the primal problem~\eqref{prob:prox.primal}~\citep[Theorem 10.7]{beck2017first}. Denoting $\omega^*$ as the optimal solution to the dual problem~\eqref{prob:prox.dual}, it follows that
\begin{equation}\label{eq:prox.dual.sol}
\!\![\omega^*]_{g_i} = -\min\!\Big\{\tfrac{1}{\alpha}, \tfrac{\lambda_i}{\norm{[z^*]_{g_i}}}\Big\}[z^*]_{g_i}
\ \ \text{for all $i\in[\ngrp]$.} 
\end{equation}
Then, by the Fenchel-Young inequality~\citep[Theorem 31.1]{rockafellar1970convex}, 
it follows that
\begin{equation}\label{eq:linking-eq}
x^* = \alpha \omega^* + z^*.
\end{equation}
Combining the definition of $z^*$ and \eqref{eq:linking-eq}, one establishes that $\omega^*=\grad f(x^*)$. Therefore, $\delta_{\min}$ measures the minimum distance from $[\omega^*]_{g_i}$ to the boundary of the ball centered at origin with distance $\lambda_i$ for all $i\not\in\supp{x^*}$.

The discussion above leads to a non-degeneracy assumption: For  groups of variables not in $\supp{x^*}$, their corresponding dual variables are strictly feasible, i.e., $\norm{[\omega^*]_{g_i}}<\lambda_i$ for all $i\not\in\supp{x^*}$. Let us formally state this non-degeneracy assumption using $\omega^* = \grad f(x^*)$ to make it consistent with the literature~\citep{lee2012manifold, poon2018local, sun19a, curtis2022}.
\begin{assumption}\label{ass:support.id}
The scalar $\delta^*$ in~\eqref{def:delta^*} satisfies $\delta^*> 0$. 
\end{assumption}



With the non-degeneracy assumption in hand, we may now give a sufficient condition for support identification.
\begin{theorem} \label{thm:id.sufficient.cond}
Let Assumption~\ref{ass:support.id} hold.
Given $\alpha>0$, $d\in\R{n}$, and the optimal solution $x^*$ to problem~\eqref{prob:main}, let us define $z=x-\alpha d$ and $y=\prox{\alpha r}{z}$. If
$$
\norm{\frac{[z - x^*]_{g_i}}{\alpha}+\grad_{g_i} f(x^*)}<\delta^* \text{ for all } i\not\in \supp{x^*},
$$
then $\supp{y}\subseteq \supp{x^*}$. Furthermore, if $\norm{y-x^*}< \Delta^*$, then $\supp{x^*}\subseteq \supp{y}$ so that, in fact, $\supp{y} = \supp{x^*}$.
\end{theorem}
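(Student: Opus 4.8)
The plan is to reduce everything to the explicit, block-separable form of the proximal operator of the weighted group $\ell_1$ regularizer. Because $r(x)=\sum_{i=1}^{\ngrp}\lambda_i\norm{[x]_{g_i}}$ separates across the disjoint groups, the proximal map acts blockwise: for each $i\in[\ngrp]$ one has $[y]_{g_i}=\prox{\alpha\lambda_i\norm{\cdot}}{[z]_{g_i}}$, which is the block soft-thresholding operator. The only fact I need from it is the vanishing criterion: $[y]_{g_i}=0$ if and only if $\norm{[z]_{g_i}}\leq \alpha\lambda_i$ (and $[y]_{g_i}$ is a strictly positive multiple of $[z]_{g_i}$ otherwise). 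The whole argument then becomes a group-by-group check of this single scalar inequality.

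For the first inclusion $\supp{y}\subseteq\supp{x^*}$, I would fix an arbitrary $i\notin\supp{x^*}$ and show $[y]_{g_i}=0$. Since $[x^*]_{g_i}=0$ for such $i$, we have $[z-x^*]_{g_i}=[z]_{g_i}$, so the hypothesis reads $\norm{[z]_{g_i}/\alpha+\grad_{g_i} f(x^*)}<\delta^*$. Applying the triangle inequality gives $\norm{[z]_{g_i}}/\alpha<\delta^*+\norm{\grad_{g_i} f(x^*)}$, and the definition of $\delta^*$ in~\eqref{def:delta^*} supplies $\delta^*\leq\delta_{\min}\leq\lambda_i-\norm{\grad_{g_i} f(x^*)}$ for every $i\notin\supp{x^*}$. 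Combining these yields $\norm{[z]_{g_i}}<\alpha\lambda_i$, and the block soft-thresholding criterion forces $[y]_{g_i}=0$. Since $i\notin\supp{x^*}$ was arbitrary, no group outside $\supp{x^*}$ survives in $y$, giving the inclusion.

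For the reverse inclusion $\supp{x^*}\subseteq\supp{y}$ under the additional hypothesis $\norm{y-x^*}<\Delta^*$, I would argue by contraposition and avoid the soft-thresholding formula entirely. Suppose $[y]_{g_i}=0$ for some $i\in\supp{x^*}$. Then $\norm{[y-x^*]_{g_i}}=\norm{[x^*]_{g_i}}\geq\Delta\geq\Delta^*$ by the definitions of $\Delta$ and $\Delta^*$ in~\eqref{def:Delta^*}, hence $\norm{y-x^*}\geq\norm{[y-x^*]_{g_i}}\geq\Delta^*$, contradicting the assumed bound. Therefore $[y]_{g_i}\neq0$ for every $i\in\supp{x^*}$, which is exactly $\supp{x^*}\subseteq\supp{y}$. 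Together with the first part this gives $\supp{y}=\supp{x^*}$.

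There is no deep obstacle here; the argument is bookkeeping once the block soft-thresholding vanishing criterion is in hand. The step demanding the most care is the translation in the first inclusion of the hypothesis $\norm{[z-x^*]_{g_i}/\alpha+\grad_{g_i} f(x^*)}<\delta^*$ into the threshold inequality $\norm{[z]_{g_i}}<\alpha\lambda_i$ for the zero groups, which hinges on the identity $\omega^*=\grad f(x^*)$ established around~\eqref{eq:linking-eq} that motivates the definition of $\delta^*$. The only other place to be careful is to keep the degenerate cases in the definitions of $\Delta^*$ and $\delta^*$ (when $\supp{x^*}$ is empty or equals $[\ngrp]$) consistent, so that the chain $\Delta^*\leq\Delta$ and $\delta^*\leq\lambda_i-\norm{\grad_{g_i} f(x^*)}$ remains valid in every case.
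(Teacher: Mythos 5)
Your proof is correct and follows essentially the same route as the paper's: both reduce each inclusion to a group-by-group check, using the triangle inequality plus the definition of $\delta^*$ to get $\norm{[z]_{g_i}}<\alpha\lambda_i$ for $i\notin\Scal(x^*)$, and a contradiction with $\Delta^*$ for the reverse inclusion. The only (cosmetic) difference is that you invoke the block soft-thresholding vanishing criterion where the paper invokes the equivalent subgradient optimality condition $\frac{[z-x^*]_{g_i}}{\alpha}\in\partial r_i([x^*]_{g_i})$; also note your proof never actually needs the dual identity $\omega^*=\grad f(x^*)$, only the definition of $\delta_{\min}$ itself.
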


The proof of Theorem~\ref{thm:id.sufficient.cond} is presented in Appendix~\ref{app:id.sufficient.cond}.

\begin{remark}
Theorem~\ref{thm:id.sufficient.cond} extends the result in \citep[Lemma 1]{sun19a} from the $\ell_1$ regularizer to the group $\ell_1$ regularizer considered here. Also, our result slightly strengthens theirs since they only discuss the result  $\Scal(y) \subseteq \Scal(x^*)$.
\end{remark}

Using the sufficient conditions for support identification from  Theorem~\ref{thm:id.sufficient.cond}, the result of consistent support identification (Definition~\ref{def:id}) can now be established.

\begin{theorem} \label{thm:support.identification}

Let Assumption~\ref{ass:generic.assumptions}--Assumption~\ref{ass:support.id} hold, $\zeta \in (0, 2)$, $\theta\geq 2$, $c = (2\theta L_g^2) / (\zeta \mu_f^2) > 2$,  and $\underline{k}=\lceil 2c-1\rceil$. Consider the sequence $\{y_k\}$ of Algorithm~\ref{algo:main} and define the event $\Ecal_{k}^{\text{id}}=\{\Scal(y_k)=\Scal(x^*)\}$ for all $k\geq 1$. Then, there exists constants $\{C_{41}, C_{42}\}\subseteq\R{2}_{+}$ that are independent of $k$,
$k_{\delta^*}=(C_{41}/\delta^*)^{4}$ and $k_{\Delta^*}=(C_{42}/\Delta^*)^{4}$ such that, with $K := \max\{k_{\delta^*}, k_{\Delta^*}, \underline{k}$\}, it follows that
$$
\P{\bigcap_{k\geq K}^{\infty} \Ecal_{k}^{\text{id}} }\geq 1-\frac{\eta_0\pi^2}{6} > 0.
$$
\end{theorem}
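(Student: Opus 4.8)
The plan is to verify, on a single high-probability event, that both sufficient conditions of Theorem~\ref{thm:id.sufficient.cond} hold at every iterate $y_k$ with index $k\geq K$. I would apply that theorem with $\alpha=\ualpha$, $x=x_k$, $d=d_k$, $z=z_k:=x_k-\ualpha d_k$, and $y=y_k=\prox{\ualpha r}{z_k}$ (line~\ref{line:yk} of Algorithm~\ref{algo:main}). Writing $\epsilon_k=d_k-\grad f(x_k)$, it then suffices to establish, for all $k\geq K$, the two conditions
\begin{align*}
\text{(A):}\quad & \norm{\tfrac{[z_k-x^*]_{g_i}}{\ualpha}+\grad_{g_i}f(x^*)}<\delta^*\ \ \text{for all } i\notin\Scal(x^*),\\
\text{(B):}\quad & \norm{y_k-x^*}<\Delta^*,
\end{align*}
which together give $\Scal(y_k)=\Scal(x^*)$, i.e.\ the event $\Ecal_k^{\text{id}}$.

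First I would reduce (A) and (B) to bounds on the two quantities whose decay we already control, namely $\norm{x_k-x^*}$ and $\norm{\epsilon_k}$. Substituting $d_k=\grad f(x_k)+\epsilon_k$ into $z_k=x_k-\ualpha d_k$ gives, for each group $i$,
$$
\tfrac{[z_k-x^*]_{g_i}}{\ualpha}+\grad_{g_i}f(x^*)=\tfrac{[x_k-x^*]_{g_i}}{\ualpha}-\big(\grad_{g_i}f(x_k)-\grad_{g_i}f(x^*)\big)-[\epsilon_k]_{g_i},
$$
so by the triangle inequality and the $L_g$-Lipschitz continuity of $\grad f$ (Assumption~\ref{ass:generic.assumptions}(\ref{ass:generic.assumptions.2})), the left side of (A) is at most $(\tfrac{1}{\ualpha}+L_g)\norm{x_k-x^*}+\norm{\epsilon_k}$. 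For (B), optimality of $x^*$ yields the fixed-point identity $x^*=\prox{\ualpha r}{x^*-\ualpha\grad f(x^*)}$, so nonexpansiveness of $\prox{\ualpha r}{\cdot}$ together with $L_g$-Lipschitzness gives
$$
\norm{y_k-x^*}\le\norm{(x_k-x^*)-\ualpha\big(d_k-\grad f(x^*)\big)}\le(1+\ualpha L_g)\norm{x_k-x^*}+\ualpha\norm{\epsilon_k}.
$$
Hence (A) holds whenever $(\tfrac{1}{\ualpha}+L_g)\norm{x_k-x^*}+\norm{\epsilon_k}<\delta^*$, and (B) holds whenever $(1+\ualpha L_g)\norm{x_k-x^*}+\ualpha\norm{\epsilon_k}<\Delta^*$.

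Next I would pass to the event $\Ecal=\bigcap_{k\geq\underline{k}}\{\norm{\epsilon_k}\le U(k)\}$ of Corollary~\ref{cor:variance.all.iters}, which has probability at least $1-\eta_0\pi^2/6$. The essential point, and the main obstacle, is that I must obtain \emph{both} the gradient-error bound and the iterate-error bound on this \emph{single} event, so that the failure probability is paid only once; a naive union bound over the separately stated events of Theorem~\ref{thm:iterate.complexity} and Corollary~\ref{cor:variance.all.iters} would give only $1-\eta_0\pi^2/3$. This is exactly where the {\tt Storm} estimator and the stabilization step are used: the bound of Theorem~\ref{thm:iterate.complexity} is a deterministic consequence of $\Ecal$ (its proof plugs $U(j)$ into the recursion for $\norm{x_j-x^*}^2$, as reflected by ${\bar c}_2$ containing $(\sigma+L_g(G_r+G_d)\zeta\ualpha)^2$), so in fact $\Ecal\subseteq\bigcap_{k\geq\underline{k}}\Ecal_k^x$. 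Working on $\Ecal$, we therefore have for each $k\geq\underline{k}$ both $\norm{\epsilon_k}\le U(k)=\Ocal(\sqrt{\log k/k})$ (Remark~\ref{rem:ek-error}, using $c>2$) and, since $\theta\ge 2$, $\norm{x_k-x^*}=\Ocal(\sqrt{\log k/k})$; consequently the left sides of the two reduced inequalities are each $\Ocal(\sqrt{\log k/k})$.

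Finally I would convert this rate into the stated thresholds. Using the elementary bound $\log k\le \sqrt{k}$ (valid for all $k\ge1$), we have $\sqrt{\log k/k}\le k^{-1/4}$, so the left side of the reduced (A) is at most $C_{41}\,k^{-1/4}$ and that of (B) at most $C_{42}\,k^{-1/4}$ for deterministic constants $C_{41},C_{42}$ independent of $k$, $\delta^*$, and $\Delta^*$. Requiring $C_{41}k^{-1/4}<\delta^*$ and $C_{42}k^{-1/4}<\Delta^*$ yields precisely $k>k_{\delta^*}=(C_{41}/\delta^*)^4$ and $k>k_{\Delta^*}=(C_{42}/\Delta^*)^4$. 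Thus, on $\Ecal$, for every $k\ge K=\max\{k_{\delta^*},k_{\Delta^*},\underline{k}\}$ both (A) and (B) hold, so $\Scal(y_k)=\Scal(x^*)$ by Theorem~\ref{thm:id.sufficient.cond}; that is, $\Ecal\subseteq\bigcap_{k\ge K}\Ecal_k^{\text{id}}$. Taking probabilities gives $\P{\bigcap_{k\ge K}\Ecal_k^{\text{id}}}\ge\P{\Ecal}\ge1-\eta_0\pi^2/6>0$, as required. One technical point to handle carefully is that the first term of the Theorem~\ref{thm:iterate.complexity} bound involves the random quantity $\norm{x_{\underline{k}}-x^*}^2$; since $\theta\ge2>1$ this term decays at rate $k^{-\theta}$, faster than $\log k/k$, and under the almost-sure boundedness implied by Assumption~\ref{ass:proxstorm.varinace.reduced}(\ref{ass:proxstorm.varinace.reduced.5}) it is dominated by the second term, so $C_{41}$ and $C_{42}$ can indeed be taken deterministic.
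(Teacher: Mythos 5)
Your proposal is correct and follows essentially the same route as the paper: both reduce conditions (A) and (B) of Theorem~\ref{thm:id.sufficient.cond} to bounds of the form $(\tfrac{1}{\ualpha}+L_g)\norm{x_k-x^*}+\norm{\epsilon_k}$ and $\norm{x_k-x^*}+\ualpha\norm{d_k-\grad f(x^*)}$, invoke Corollary~\ref{cor:variance.all.iters} and Theorem~\ref{thm:iterate.complexity} on a single event so the failure probability $\eta_0\pi^2/6$ is paid only once (the paper phrases this via $\P{\bigcap_k\Ecal^x_k \mid \bigcap_k\Ecal_k}=1$ and a Bayes'-rule computation, which is exactly your set inclusion $\Ecal\subseteq\bigcap_k\Ecal^x_k$), and convert the $\Ocal(\sqrt{\log k/k})$ rate into the fourth-power thresholds via $\sqrt{\log k/k}\le k^{-1/4}$. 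Your closing remark about the random quantity $\norm{x_{\underline{k}}-x^*}$ being almost surely bounded (so that $C_{41},C_{42}$ are deterministic) is a point the paper's proof glosses over, and your handling of it is sound.
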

The proof of Theorem~\ref{thm:support.identification} is presented in Appendix~\ref{app:support.identification}.
\begin{remark}\label{remark:rda.explain}
Using Theorem~\ref{thm:support.identification} and results from \citet{xiao2009dual} and \citet{lee2012manifold}, we can also derive a high probability support identification complexity bound for \rda{} for any given iterate $x_k$, which is different from the result in \citet[Theorem 5]{sun19a}. To do so, we need extra assumptions on the function $r$ that do not hold for the weighted group $\ell_1$-norm, and boundedness of $\{\nabla \ell(x_k;\xi_k)\}$ generated by \rda{}\footnote{See  Lemma~\ref{lemma:rda.related}(\ref{lemma:rda.related.2}) for precise details of the assumptions.}. Specifically, we consider the update of \rda{} as
$x_{k+1}=\prox{\alpha_kr}{-\alpha_kd_k} \text{ with } \alpha_k = \frac{\sqrt{k}}{\underline{\alpha}}$, 
where $d_k=\frac{1}{k}\sum_{i=1}^{k}\grad \ell(x_i; \xi_i)$\footnote{See  Lemma~\ref{lemma:rda.related}(\ref{lemma:rda.related.1}) to see how this form of the update is equivalent to the \rda{} update presented in~\citet{xiao2009dual}.}.
It follows from Lemma~\ref{lemma:rda.related}(\ref{lemma:rda.related.3}) that 
\begin{equation*}
\P{\supp{x_{k+1}} 
=\supp{x^*}}\geq 1 - \eta_k^{\rda{}}
\end{equation*}
where
\begin{equation*}
\eta_k^{\rda{}}
=\max\left\{\Ocal\left(\frac{1}{\delta^*\cdot k^{1/4}}\right), \Ocal\left(\frac{1}{\Delta^*\cdot k^{1/4}}\right)\right\}.
\end{equation*}
Since $\sum_{k=1}^{\infty}\eta_{k}^{\rda{}}$ diverges, one cannot give a lower bound on  $\P{\cap_{k\geq K^{\rda{}}}^{\infty}\{\supp{x_k}=\supp{x^*}\}}$ for some sufficiently large $K^{\rda{}}$. Instead, for any $\eta_0\in (0,1)$, there exists a $\kbar =  \Ocal\Big(\max\Big\{\left(\frac{1}{\eta_0\delta^*}\right)^4,  \left(\frac{1}{\eta_0\Delta^*}\right)^4 \Big\}\Big)$ such that any given $k \geq \kbar$ satisfies $\P{\Scal(x_{k+1})=\Scal(x^*)}\geq 1-\eta_0$. This establishes the support identification property (see Definition~\ref{def:one.iter.id}). However, in Theorem~\ref{thm:support.identification} we show that \spstorm{} has a consistent support identification property (see Definition~\ref{def:id}), which is a stronger result. Lastly, we note that the $K$ value appearing in Theorem~\ref{thm:support.identification} grows with the condition number $L_g/\mu_f$.
\end{remark}
\begin{remark}
Similar to~\citet{sun19a}, under additional assumptions, it is possible to extend Theorem~\ref{thm:support.identification} to the case that $f$ is convex. In particular, if we assume that $\norm{x_k-x^*}\leq A_k$ for some optimal solution $x^*$ and a decreasing sequence $\{A_k\}$ with some positive probability (for example, with probability $1-\eta_k$) for all $k\geq 1$, then we can prove a support identification result, but we no longer have a complexity bound.
\end{remark}

\section{NUMERICAL EXPERIMENTS}\label{sec.experiment}

\subsection{Problems, Baselines, and Implementation Details}\label{sec.problem.and.datasets}
\textbf{Problems.}~We consider solving problem~\eqref{prob:main} with $f(x)$ and $r(x)$ given by the regularized binary logistic loss and group-$\ell_1$ regularizer, respectively, resulting in the problem
$$
\min_{x \in \mathbb{R}^{n}} \tfrac{1}{N} \sum_{j=1}^{N} \log \left(1+e^{-y_{j} x^{T} d_{j}}\right) + 10^{-5}\|x\|^2 + \sum_{i=1}^{\ngrp}\lambda_i\norm{[x]_{g_i}}
$$ 
where $N$ is the number of data points, $d_j\in\R{n}$ is the $j$th data point, $y_j\in \{-1, 1\}$ is the class label for the $j$th data point, and $\lambda_i>0$ for all $(j,i)\in [N]\times[\ngrp]$. 
Data sets for the logistic regression problems were obtained from the LIBSVM repository.\footnote{https://www.csie.ntu.edu.tw/cjlin/libsvmtools/datasets} We excluded all multi-class (greater than two) classification datasets, datasets with feature less than 50 or samples less than 10000, and all data sets that were too large ($\geq$ 16GB)\footnote{Memory usage is counted by a Python object instead of the raw txt file. We also exclude the dataset \href{https://www.csie.ntu.edu.tw/~cjlin/libsvmtools/datasets/binary/epsilon_normalized.xz}{epsilon} since we had an error message indicating a wrong data format in line 33334.}. Finally, for the adult data (a1a--a9a) and webpage data (w1a--w8a), we used only the largest instances, namely a9a and w8a. This left us with our final subset of $10$ data sets that can be found in Table~\ref{tab:test-db}. Following~\citet{xiao2014proximal}, we scaled each data point to have a unit norm, i.e., $\norm{d_j}=1$ for all $j\in[N]$.


\begin{table}[thbp]
\centering
\caption{Description of the data sets. }
\label{tab:test-db}
\begin{tabular}{|l|r|r|}
\hline
data set           & N          & n         \\ \hline
a9a                & 32561      & 123        \\
avazu-app.tr       & 12,642,186 & 1,000,000 \\
covtype     & 581,012    & 54           \\
kdd2010            & 8,407,752 	& 20,216,830 \\
news20      & 19,996     & 1,355,191  \\
phishing           & 11,055     & 68     \\ 
rcv1        & 20,242     & 47,236      \\
real-sim           & 72,309     & 20,958 \\  
url      & 2,396,130 	& 3,231,961     \\
w8a                & 49,749     & 300  \\\hline
\end{tabular}
\end{table}

For each dataset, we considered four group structures and two different solution sparsity levels, which led to 80 test instances in total.  We considered the four different numbers of groups in $\{\lfloor 0.25n\rfloor, \lfloor 0.50n\rfloor, \lfloor 0.75n\rfloor, n\}$, where $n$ is the problem dimension; notice that the last setting recovers $\ell_1$-norm regularization. Then, for a given number of groups, the variables were sequentially distributed (as evenly as possible) to the groups; e.g., $10$ variables among $3$ groups would have been distributed as $g_1 = \{1,2,3\}$, $g_2 = \{4,5,6\}$, and $g_3 = \{7,8,9,10\}$. 
We considered two different solution sparsity levels obtained by adjusting the group weights $\{\lambda_i\}$. 
Specifically, we considered group weights $\lambda_i= \Lambda\sqrt{|g_i|}$ for all $i\in[\ngrp]$ with  $\Lambda=0.1\Lambda_{\min}$ and $\Lambda=0.01\Lambda_{\min}$, where $\Lambda_{\min}$ is the minimum positive number such that the solution to the logistic problem with $\lambda_i = \Lambda_{\min}\sqrt{|g_i|}$ is $x=0$. See~\citet[equation (23)]{Yang2015} for the formula to compute $\Lambda_{\min}$. 

\noindent\textbf{Baselines.} We choose \proxsvrg{}~\citep{xiao2014proximal}, \saga{}~\citep{defazio2014saga}, and \rda{}~\citep{xiao2009dual} as baselines since they have theoretical guarantees for identifying the support. We also include \pstorm{}~\citep{xu2020momentum} to demonstrate the empirical importance of the modification we made in \spstorm{} (i.e., the stabilization step in Line~\ref{line:stablelization}).  
We use \farsagroup{}~\citep{curtis2022}, a deterministic second-order method, to find a highly accurate estimate to the optimal solution $x^*$ for each test instance by solving the problem to high accuracy ($10^{-8}$), as measured by the norm of the  gradient mapping in~\eqref{eq:chi.measure}.

\noindent\textbf{Implementation Details}
We implemented a version of \proxsvrg{} as described in \citet[Equation (8) Option II]{poon2018local}, \saga{} as described in \citet[Equation (6)]{poon2018local}, \rda{} as described in \citet[Algorithm 1]{lee2012manifold}, and \pstorm{} as described in \citet[Algorithm 1]{xu2020momentum}\footnote{The code is publicly available at \url{https://github.com/Yutong-Dai/S-PStorm}.}.
(i)~\textbf{Step size strategy}: For \proxsvrg{}, \saga{}, and \spstorm{}, we used a constant step size strategy by setting $\alpha_k\equiv 0.1/L_g$, which follows the choice made in \citet{xiao2014proximal}. We remark that $L_g$ can be estimated by $1/4$ since the data set is normalized instance-wise (see \citet[Section 4.1]{xiao2014proximal} for the reason). For \rda{}, the step size was set as $\alpha_k=\sqrt{k}/\gamma$.\footnote{The original paper used $\beta_k$ to denote the step size. See part (1) of Lemma~\ref{lemma:rda.related} for how to map $\beta_k$ to $\alpha_k$.} We tuned $\gamma$ by choosing its value from the set $\{10^{j}\}_{j\in\{-4,-3,\ldots,2\}}$ using the 32 test instances obtained from the datasets a9a, covtype, phishing, and w8a, and found that $\gamma=10^{-2}$ worked the best. For \pstorm{}, we used  $\alpha_k=\frac{4^{1/3}/(8L_g)}{(k+4)^{1/3}}$ as suggested in \citet[Theorem 2]{xu2020momentum}. (ii)~\textbf{Algorithm specific parameters}: \proxsvrg{} is a double loop algorithm and we set the inner loop length to 1, i.e., the parameter $P$ in \citet[Equation (8) Option II]{poon2018local} was set to 1. For \rda{} the prox-function $h$ was chosen as the square of the $\ell_2$ norm. For \pstorm{} we used $\beta_k = \frac{1+24\alpha_k^2L_g^2-\frac{\alpha_{k+1}}{\alpha_k}}{1+4\alpha_k^2L_g^2}$, and for \spstorm{} we used  $\beta_k = \frac{1}{k+1}$. The $\zeta$ parameter is chosen in an adaptive way to improve the practical performance. In particular, $\zeta$ is initialized to $1$ and increased by $1$ after an  iteration is completed. Although this choice is not covered by the convergence theory, one could cap the number of adjustments made to $\zeta$, in which case it is covered by the theory.
For all algorithms, the batch size was set to 256 and the starting point was the zero vector.  
(iii) \textbf{Termination conditions}: A test instance was terminated when either 1000 epochs was reached,
or a 12 hour time limit was reached. We note that \saga{} terminated immediately on all test instances associated with the datasets avazu-app.tr, kdd2010, news20, real-sim, and url because the storage of the gradient look-up table exceeded the memory limit. 

\subsection{Numerical Results}
\label{sec:num-results-metrics}

Experiments were run on a cluster with 16 AMD Opteron Processor 6128 2.0 GHz CPUs and 32 GB memory.

\noindent\textbf{Support Identification Performance.} We considered four metrics for measuring an algorithm's performance on support identification. Specifically, we computed the supports of the iterates $\{x_{kb}~|~k=1\cdots,1000\}$ with $b=\lceil N/m \rceil$, where $m$ was the mini-batch size. The sequence $\{x_{kb}\}$ can be thought of as the ``major iterates" resulting after each full data-pass.
The first metric was the \textit{total number of identifications}, which measured the number of iterates in  $\{x_{kb}\}$ that correctly identified the support $\Scal(x^*)$ (the larger the better); the second metric was the \textit{first identification}, which was the smallest $k_0\in[1000]$ such that $x_{k_0b}$ identified the support $\Scal(x^*)$ (the smaller the better); the third metric was the \textit{first consistent identification}, which was the smallest $K\in[1000]$ such that all $\{x_{kb}\}_{k\geq K}$ identified the support $\Scal(x^*)$ (the smaller the better); the last metric was the \textit{last iterate support recovery}, which was defined as  $1-\frac{\left|\Scal(x_{1000b})\Delta \Scal(x^*)\right|}{\left|\Scal(x^*)\right|}$ (the closer to $1$ the better) with $\Delta$ being the set symmetric difference. 
The \textit{last iterate support recovery} metric was introduced because we observed that all five algorithms failed to identify the support $\Scal(x^*)$ on some test instances generated by the larger datasets (e.g., url) as a result of not getting an accurate enough approximate solution.
Nonetheless, when the algorithms terminated, the last iterates still had sparse structure, 
and the \textit{last iterate support recovery} metric measured how close the algorithm was to identifying the true support.  

For every test instance solved by a given algorithm, we repeated the experiments for 3 independent runs and for each run compute the four metrics, which are then averaged to obtain the final values of the metrics for the algorithms. For a given test instance and metric, we assigned scores from $\{1,2,3,4,5\}$ to the 5 algorithms based on their ranked performances. The better an algorithm performed, the higher the score it received. The best performer received a score of $5$, the second best performer received a score of $4$, and so forth.\footnote{When two or more algorithms obtained the same value for a metric, we assign them all the same score.}   For the first three metrics, if an algorithm failed to identify the support before it terminated, we assigned the algorithm a score of $0$. For each metric, we summed over all test instances to get the final scores for each algorithm and then normalized the scores so that the scores for all algorithms under a given metric summed to one.

We present the normalized scores for the 5 algorithms over the 4 metrics in \autoref{fig:suppot.id.performance}, and provide the raw data for these metrics in Appendix~\ref{appendix:exp.additional}. One can see that \spstorm{} consistently outperformed the other algorithms on all 4 metrics by a significant margin.

\begin{figure}[!th]
    \centering
    \includegraphics[width=0.5\textwidth]{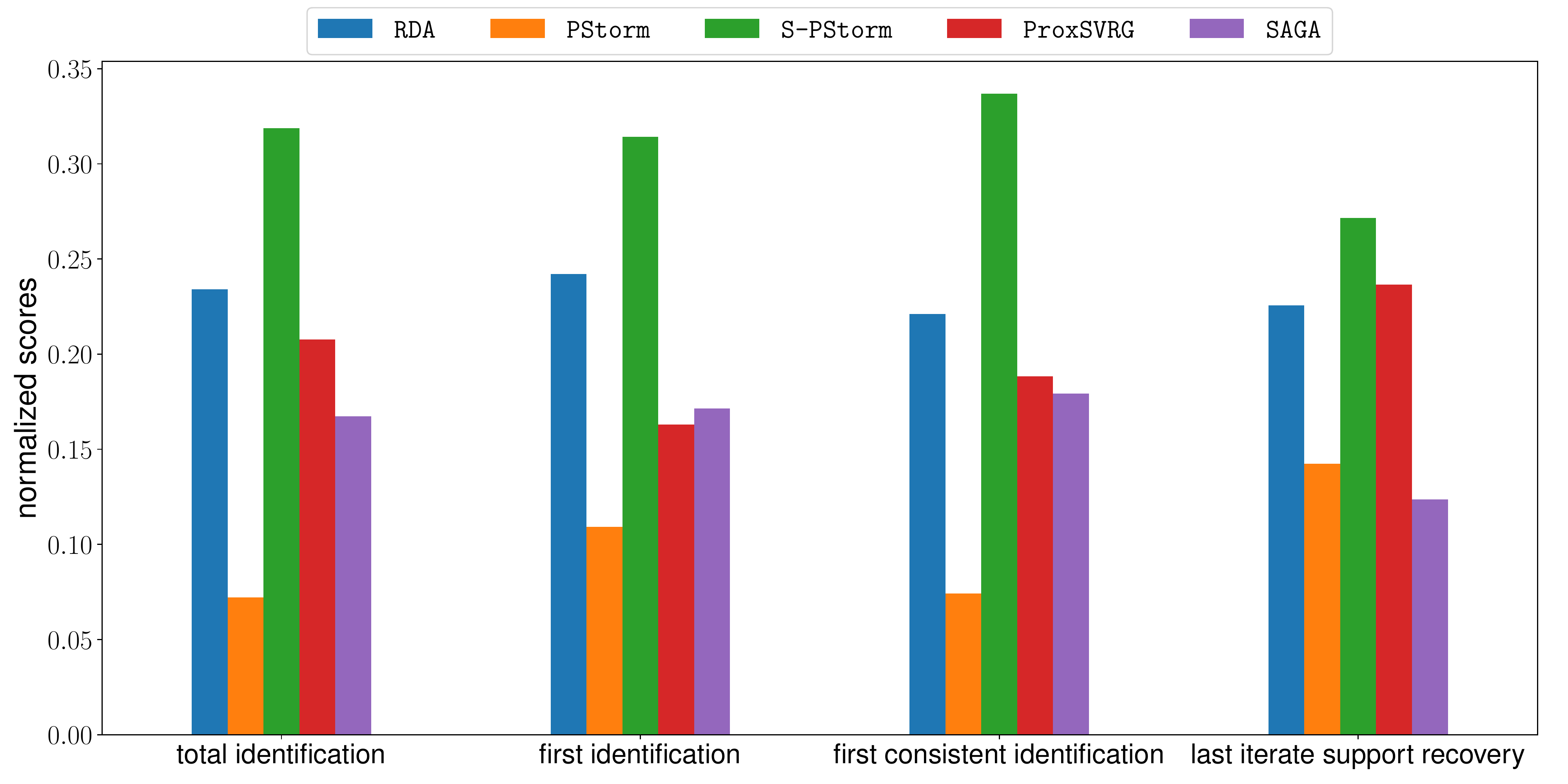}
    \caption{Normalized scores for four metrics that evaluate the performance of the support identification. }
    \label{fig:suppot.id.performance}
\end{figure}

\noindent\textbf{Solution Quality.} We measure the solution quality of an algorithm by computing the optimal objective function value gap. Specifically, for a given test instance, denote $F^*=\min_{j}\{F_{j}^{\text{best}}\}$, where $F_{j}^{\text{best}}=\min_{b\in[1000]}\{F(x_{kb}^{j})\}$ with $j\in$\{\proxsvrg{}, \saga{}, \rda{}, \pstorm{}, \spstorm{}\} and $\{x_{kb}^j\}$ generated by the $j$th algorithm. If algorithm $j$ failed on a given problem instance (due to insufficient memory), we set $F_{j}^{\text{best}}=\infty$. Then, we compute the optimal objective function value gap as $(F_{j}^{\text{best}}-F^*)/{\max\{1, F^*\}}$ for all $j$. The results are visualized in \autoref{fig:Fval.diff}. The deeper the blue color of a rectangle for an algorithm, the better it performed in terms of achieving a lower objective value. On the flip side, the deeper the red color of a rectangle for an algorithm, the worse  it performed in terms of achieving a lower objective value. In Appendix~\ref{appendix:exp.performance}, we provide a discussion on the performance gap for the different methods.
\begin{figure}[!ht]
    \centering
    \includegraphics[width=0.5\textwidth]{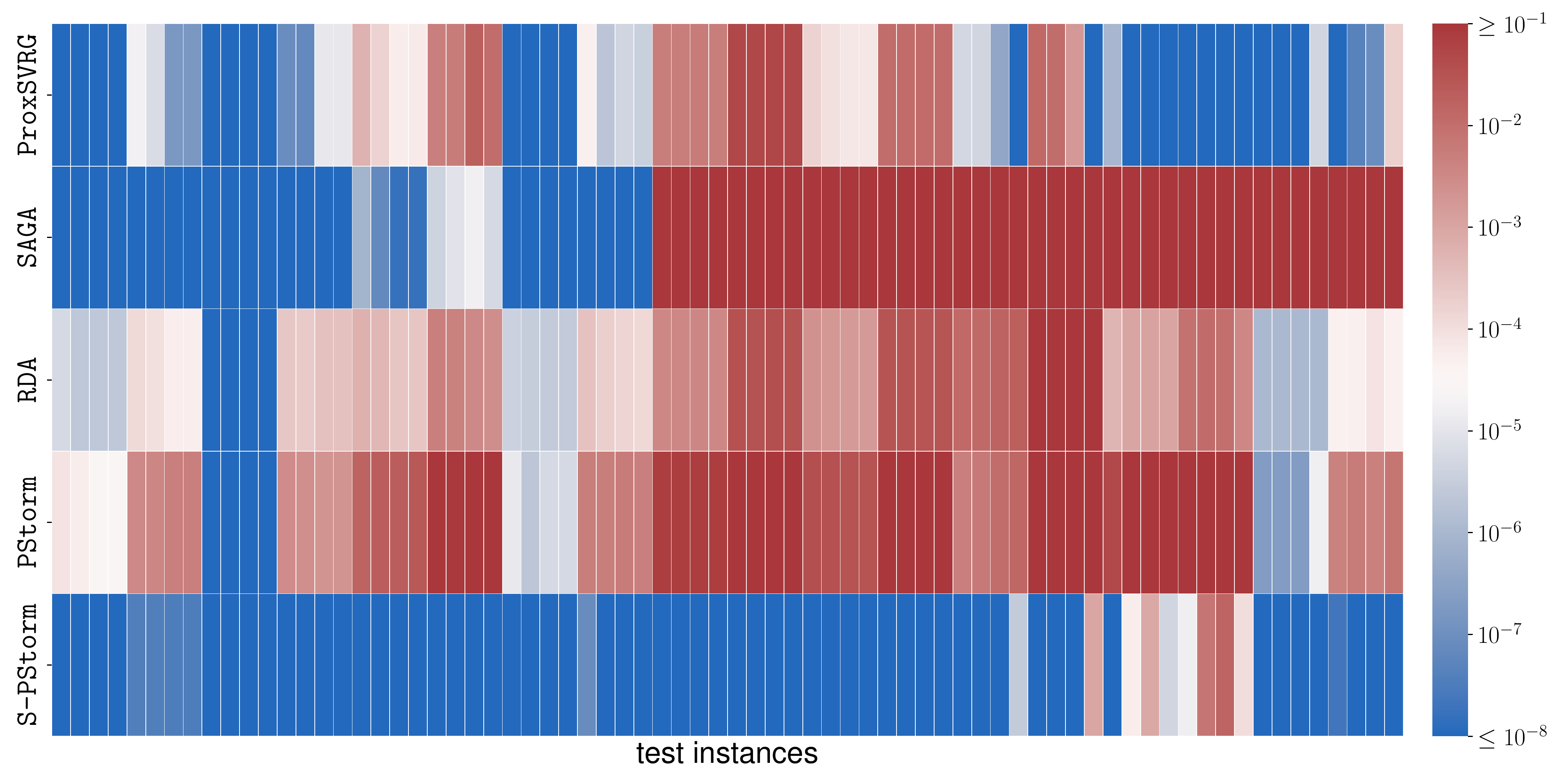}
    \caption{Visualization of objective value gaps for different methods. Each rectangular represents a test instance.
    \vspace{-15pt}
    }
    \label{fig:Fval.diff}
\end{figure}

Together \autoref{fig:suppot.id.performance} and \autoref{fig:Fval.diff} illustrate that \spstorm{} performed significantly better in both support identification and achieving better objective function values. 

Lastly, in Appendix~\ref{appendix:exp.additional}, we illustrate how the distance to the optimal solution $\norm{x_k-x^*}$ ($x^*$ is obtained using the \farsagroup{} algorithm) and error $\epsilon_k$ in the gradient estimator converge to $0$.  It can be observed empirically that the rates at which $\{\epsilon_k\}$ converges to $0$  and $\{x_k\}$ converges to $x^*$ agree with our $\Ocal(\sqrt{\log k/k})$ convergence result (see  Remark~\ref{rem:ek-error} and Remark~\ref{rem:xk_rate}).



\section{CONCLUSION}\label{sec.conclusion}

This paper proposes a new variance-reduced and stabilized stochastic proximal gradient method \spstorm{} for stochastic optimization with structured sparsity. Compared with existing methods, \spstorm{} has two new advantages. In terms of theoretical results, \spstorm{} has the consistent support identification property, which has not been proved for \rda{}. Regarding the efficiency and deployability, \spstorm{} neither requires any exact gradient evaluations nor needs to store a history of stochastic gradients. Numerical experiments on regularized logistic loss problems show that \spstorm{} outperforms popular methods in terms of both support identification and final objective function values obtained.

\noindent\textbf{Future directions.}
First, it would be interesting to investigate whether our consistent support identification results extend to the non-convex setting.  Second, our convergence and support identification results rely on exact evaluations of proximal operator, but some proximal operators, for example, overlapping group $\ell_1$ regularizers~\citep{obozinski2011group, yuan2013efficient}, do not admit closed-form solutions. We believe our results can be extended to this setting provided a subproblem solver is carefully designed to produce inexact proximal operator solutions geared towards support identification~\citep{dai2022inexact}.

\subsubsection*{Acknowledgements}
We thank the reviewers for their constructive comments that helped improve the paper. The authors Yutong Dai, Frank E. Curtis, and Daniel P. Robinson were supported by the US National Science Foundation grant DMS-2012243. The author Guanyi Wang was supported by the Singapore MOE under AcRF Tier-1 grant 22-5539-A0001.

\bibliographystyle{apalike}
\bibliography{reference}

\newpage
\appendix

\section{Proofs of Results in Section~\ref{sec.analysis}}

\subsection{Proof of Theorem~\ref{thm:error.in.gradient}} \label{app:error.in.gradient}
We first establish some useful lemmas. The first lemma establishes an upper bound on $ \left(\prod_{j= i}^{k}(1-\beta_j)\right)$, which will be used later to prove the variance reduction property.

\begin{lemma} \label{lemma:prod.1.minus.beta} 
  Under Assumption~\ref{ass:algo.choice} and with  $\underline{k}=\lceil (2c) - 1\rceil$, it holds for all $k\geq \underline{k}$ and $i\in \{2,3,\cdots,k\}$ that
$$
    \left(\prod_{j= i}^{k}(1-\beta_j)\right) \leq \exp\left( - \frac{\underline{k} - \min\{\underline{k}, i\}}{2} \right) \left( \frac{\max\{\underline{k}, i\} + 1}{k + 2}\right)^{c}.
$$
\end{lemma}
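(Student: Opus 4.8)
The plan is to reduce the whole estimate to the elementary inequality $1-x \le e^{-x}$ combined with a telescoping comparison, after first removing the $\min$ in the definition of $\beta_j$. First I would observe that, since $c/(j+1) \le 1/2$ if and only if $j \ge 2c-1$, and since $\underline{k} = \lceil 2c-1\rceil$ is the least integer that is at least $2c-1$, the weight splits cleanly into two regimes:
\begin{equation*}
\beta_j = \tfrac{1}{2} \ \text{ for } j \le \underline{k}-1, \qquad \beta_j = \tfrac{c}{j+1} \ \text{ for } j \ge \underline{k}.
\end{equation*}
The first regime is valid because $\underline{k}-1 < 2c-1$; this dichotomy is precisely what the constant $\underline{k}$ is engineered to produce, and it is the organizing step of the proof.

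The core estimate is the ``tail'' product over indices where $\beta_j = c/(j+1)$. For any $\ell$ with $\underline{k} \le \ell \le k$, applying $1-\beta_j \le e^{-\beta_j}$ and then the integral comparison $\tfrac{1}{j+1} \ge \int_{j+1}^{j+2}\tfrac{dt}{t} = \ln\tfrac{j+2}{j+1}$ gives, after telescoping,
\begin{equation*}
\prod_{j=\ell}^{k}(1-\beta_j) \le \exp\!\left(-c\sum_{j=\ell}^{k}\tfrac{1}{j+1}\right) \le \exp\!\left(-c\,\ln\tfrac{k+2}{\ell+1}\right) = \left(\frac{\ell+1}{k+2}\right)^{c}.
\end{equation*}
This single display does all the analytic work; everything else is bookkeeping.

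I then split into two cases according to $\min\{\underline{k},i\}$. If $i \ge \underline{k}$, the product runs entirely in the tail regime, so taking $\ell = i$ above yields the claim with prefactor $\exp(0)=1$ and $\max\{\underline{k},i\}=i$. If instead $i \le \underline{k}-1$, I factor the product at $j=\underline{k}$: the first block has $\underline{k}-i$ terms, each equal to $1-\tfrac{1}{2} = \tfrac{1}{2} \le e^{-1/2}$, contributing at most $\exp(-(\underline{k}-i)/2) = \exp(-(\underline{k}-\min\{\underline{k},i\})/2)$, while the second block is bounded by $\big((\underline{k}+1)/(k+2)\big)^{c} = \big((\max\{\underline{k},i\}+1)/(k+2)\big)^{c}$ via the tail estimate with $\ell=\underline{k}$. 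Multiplying the two blocks reproduces exactly the stated bound.

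The computation is routine, and I do not anticipate a genuine obstacle; the two points meriting care are (i) correctly deriving the two-regime form of $\beta_j$ from the value of $\underline{k}$, which is where the hypothesis $k\ge\underline{k}$ matters so that the tail block is nonempty and the split at $j=\underline{k}$ is legitimate, and (ii) matching the constant $\tfrac{1}{2}$ of the first regime to the target factor $\exp(-1/2)$, which rests on $\tfrac{1}{2}\le e^{-1/2}$ (equivalently $\ln 2 \ge \tfrac{1}{2}$). The part that most benefits from attention is simply writing the case analysis so that the $\min$ and $\max$ in the exponent and the base are handled uniformly across both cases.
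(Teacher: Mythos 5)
Your proposal is correct and follows essentially the same route as the paper's proof: splitting $\beta_j$ into the constant regime ($j<\underline{k}$) and the $c/(j+1)$ regime ($j\geq\underline{k}$), applying $1-x\leq e^{-x}$, and using an integral (equivalently, telescoping logarithm) comparison to convert the harmonic-type sum into the factor $\left(\frac{\max\{\underline{k},i\}+1}{k+2}\right)^{c}$. The only cosmetic difference is that you factor the product into two blocks and bound them separately, whereas the paper absorbs everything into a single exponential of the summed $\beta_j$'s before splitting the sum; the resulting bounds are identical.
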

\begin{proof}
One can see from Assumption~\ref{ass:algo.choice} that
\begin{align*}
    \beta_j = \left\{
    \begin{array}{lll}
        \frac{1}{2} & \textup{if} ~ j < \underline{k} \\
        \frac{c}{j + 1} &  \textup{if} ~ j \geq \underline{k}.
    \end{array}
    \right.
\end{align*}
It follows from the above inequality and the fact that $1 - x \leq \exp(- x)$ for all $x \in \mathbb{R}$ that
\begin{align*}
    \left(\prod_{j= i}^{k}(1-\beta_j)\right) 
    \leq & ~ \exp\left(-\sum_{j=i}^{k}\beta_j\right)
    = 
    \begin{cases}
    \exp\left(-\sum_{j=i}^{k} \frac{c}{j+1}\right) & \text{ if } i \geq \underline{k},\\
    \exp\left(-\sum_{j=i}^{\underline{k}-1}\frac{1}{2}-\sum_{j=\underline{k}}^{k} \frac{c}{j+1}\right) & \text{ if } i < \underline{k},\\
    \end{cases}
    \\
    = & ~ \exp \left( - \frac{\underline{k} - \min\{\underline{k}, i\}}{2} - \sum_{j = \max\{\underline{k}, i\}}^k \frac{c}{j + 1} \right) \\ 
    \leq & ~ \exp\left( - \frac{\underline{k} - \min\{\underline{k}, i\}}{2} - \int_{x = \max\{\underline{k}, i\}}^{k+1} \frac{c}{x + 1} \mathrm{d}x \right) \\
    = & ~ \exp\left( - \frac{\underline{k} - \min\{\underline{k}, i\}}{2} \right) \left( \frac{\max\{\underline{k}, i\} + 1}{k + 2}\right)^{c},
\end{align*}
where the second inequality follows from $\int_{a}^{b+1} \frac{1}{x} d x<\sum_{j=a}^b \frac{1}{i}$ for any $0<a\leq b$. This completes the proof. 
\end{proof}

The next lemma establishes, for all $k$, a relationship between the stochastic gradient error $\epsilon_k=d_k-\grad f(x_k)$ and a martingale. This is useful for an Azuma-Hoeffding-type inequality that will be used to prove a variance reduction property.

\begin{lemma}\label{lemma:giant}
For all $k\geq 2$, with the convention that $\prod_{i=l}^{u}a_i=1$ if $l>u$, consider $\{e_{ki}\}_{i=0}^{k}$ with
\begin{align*}
    e_{ki} := 
    \begin{cases}
    0 & i = 0, \\
    \left(\prod_{j=2}^{k}(1-\beta_j)\right) A_1 & i = 1,\\
    \left(\prod_{j=i+1}^{k}(1-\beta_j)\right) A_i + \left(\prod_{j=i}^{k}(1-\beta_j)\right)B_i & 2\leq i\leq k,
    \end{cases}
\end{align*}
where $A_i := v_i - \grad f(x_i)$ and $B_{i} := \grad f(x_{i-1}) - u_{i}$ for all $i\geq 1$ with $v_i$ and $u_i$ defined as in Algorithm~\ref{algo:main}.
\begin{enumerate}
    \item Consider $\{S_{kt}\}_{t=0}^\infty$ with $S_{kt} := \sum_{i=0}^t e_{ki}$ for all $0 \leq t \leq k$ and $S_{kt} = S_{kk}$ for all $t > k$. Under Assumption~\ref{ass:proxstorm.varinace.reduced}(\ref{ass:proxstorm.varinace.reduced.1}), $\{S_{kt}\}_{t=0}^\infty$ forms a martingale with respect to the filtration $\{\Fcal_{t}\}_{t=0}^\infty$. Specifically, with $\Fcal_0 = \Fcal_1 = \sigma(x_1)$ and $\Fcal_t$ is the $\sigma$-algebra generated by $\{\{\Xi_{1,i}\}_{i=1}^{m}, \dots, \{\Xi_{(t-1),i}\}_{i=1}^{m}\}$ (of which $\{\{\xi_{1,i}\}_{i=1}^{m}, \cdots, \{\xi_{(t-1),i}\}_{i=1}^{m}\}$ is a realization) for all $t\in\{2,\cdots,k\}$, and $\Fcal_t = \Fcal_k$ for all $t>k$.
    \label{lemma:grad.error.martingale}
    
    \item With $\{S_{kt}\}_{t=0}^\infty$ defined as in part \ref{lemma:grad.error.martingale}, one has that $S_{kk}=\epsilon_{k}$. \label{lemma:grad.error.decomposition}
    
    \item\label{lemma:eki.norm.bound} Under Assumption~\ref{ass:proxstorm.varinace.reduced} and Assumption~\ref{ass:algo.choice} and with $\underline{k}=\lceil (2c) - 1\rceil$, it holds almost surely that
    \begin{align*}
    \norm{e_{ki}} \leq
    \begin{cases}
    \sigma \exp\left( - \frac{\underline{k} - 2}{2} \right) \left( \frac{\underline{k} + 1}{k + 2}\right)^{c} & \textup{if} ~~ i = 1,  \\
        \big(2 \sigma + 2L_g  (G_r+G_d)\zeta \ualpha \big) \frac{1}{2} \exp\left( - \frac{\underline{k} - i}{2} \right) \left( \frac{\underline{k} + 1}{k + 2}\right)^{c} & \textup{if} ~~ 2 \leq i \leq \underline{k}, \\
        \big(2 \sigma + 2L_g  (G_r+G_d)\zeta \ualpha \big) \frac{c}{i} \left( \frac{i + 1}{k + 2}\right)^{c} & \textup{if} ~~ \underline{k} + 1 \leq i \leq k. 
    \end{cases}
    \end{align*}
\end{enumerate}
\end{lemma}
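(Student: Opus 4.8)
The three parts of Lemma~\ref{lemma:giant} build toward an Azuma--Hoeffding argument for $\epsilon_k$: Part~\ref{lemma:grad.error.decomposition} exhibits $\epsilon_k$ as the terminal value $S_{kk}$ of a sum, Part~\ref{lemma:grad.error.martingale} shows this sum is a martingale, and Part~\ref{lemma:eki.norm.bound} bounds its increments. The plan is to derive the recursion for $\epsilon_k$ first, since it simultaneously produces the decomposition (Part~\ref{lemma:grad.error.decomposition}) and makes the martingale structure (Part~\ref{lemma:grad.error.martingale}) transparent. Substituting the {\tt Storm} update $d_i = v_i + (1-\beta_i)(d_{i-1} - u_i)$ (line~\ref{line:storm}) into $\epsilon_i = d_i - \grad f(x_i)$ and inserting $\pm\grad f(x_{i-1})$ into $d_{i-1} - u_i$ gives, for $i \geq 2$, the one-step recursion $\epsilon_i = \big[A_i + (1-\beta_i)B_i\big] + (1-\beta_i)\epsilon_{i-1}$, with base case $\epsilon_1 = d_1 - \grad f(x_1) = v_1 - \grad f(x_1) = A_1$. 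Unrolling this recursion and using $\prod_{j=i}^k(1-\beta_j) = (1-\beta_i)\prod_{j=i+1}^k(1-\beta_j)$ to attach the products $\prod_{j=i+1}^k$ and $\prod_{j=i}^k$ to the $A_i$ and $B_i$ contributions, respectively, reproduces exactly $\sum_{i=0}^k e_{ki} = S_{kk}$; this proves Part~\ref{lemma:grad.error.decomposition} (an induction on $k$ is an equivalent route).

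For Part~\ref{lemma:grad.error.martingale}, the increments of $\{S_{kt}\}$ are the $e_{ki}$, each of which is determined by the mini-batches through iteration $i$. Since every product $\prod_j(1-\beta_j)$ is deterministic under Assumption~\ref{ass:algo.choice}, it suffices to verify $\E{}{A_i \mid \Fcal_i} = 0$ and $\E{}{B_i \mid \Fcal_i} = 0$. Here $x_i$ and $x_{i-1}$ are $\Fcal_i$-measurable (they are formed from the mini-batches of iterations before $i$), while the iteration-$i$ samples are independent of $\Fcal_i$; hence Assumption~\ref{ass:proxstorm.varinace.reduced}(\ref{ass:proxstorm.varinace.reduced.1}) gives $\E{}{v_i \mid \Fcal_i} = \grad f(x_i)$ and $\E{}{u_i \mid \Fcal_i} = \grad f(x_{i-1})$, so both conditional means vanish. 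Integrability of the partial sums follows from the almost-sure bounds established in Part~\ref{lemma:eki.norm.bound}, and together with adaptedness this yields the martingale property.

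The technical heart is Part~\ref{lemma:eki.norm.bound}. I would factor $e_{ki} = \big(\prod_{j=i+1}^k(1-\beta_j)\big)\big[A_i + (1-\beta_i)B_i\big]$ for $2 \leq i \leq k$. The case $i=1$ is immediate: $\|A_1\| \leq \sigma$ by Assumption~\ref{ass:proxstorm.varinace.reduced}(\ref{ass:proxstorm.varinace.reduced.4}), and Lemma~\ref{lemma:prod.1.minus.beta} bounds $\prod_{j=2}^k(1-\beta_j)$ by $\exp(-(\underline{k}-2)/2)\,((\underline{k}+1)/(k+2))^c$. For $2 \leq i \leq k$ the key is that the {\tt Storm} combination $A_i + (1-\beta_i)B_i$ is of order $\beta_{i-1}$ rather than $\Ocal(1)$: writing it as $(A_i + B_i) - \beta_i B_i$, the cancellation in $A_i + B_i = (v_i - u_i) - (\grad f(x_i) - \grad f(x_{i-1}))$ together with $L_g$-Lipschitz continuity (Assumption~\ref{ass:generic.assumptions}(\ref{ass:generic.assumptions.2})) gives $\|A_i + B_i\| \leq 2L_g\|x_i - x_{i-1}\|$, while $\|\beta_i B_i\| \leq \beta_i\sigma$ by Assumption~\ref{ass:proxstorm.varinace.reduced}(\ref{ass:proxstorm.varinace.reduced.4}). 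The consecutive-iterate gap is controlled by the stabilization step (line~\ref{line:stablelization}): $x_i - x_{i-1} = \zeta\beta_{i-1}(y_{i-1} - x_{i-1})$, and the optimality condition for $y_{i-1} = \prox{\ualpha r}{x_{i-1} - \ualpha d_{i-1}}$ yields $y_{i-1} - x_{i-1} = -\ualpha(d_{i-1} + g_r)$ for some $g_r \in \partial r(y_{i-1})$, whence $\|y_{i-1} - x_{i-1}\| \leq \ualpha(G_d + G_r)$ by Assumptions~\ref{ass:proxstorm.varinace.reduced}(\ref{ass:proxstorm.varinace.reduced.5}) and~\ref{ass:proxstorm.varinace.reduced}(\ref{ass:proxstorm.varinace.reduced.3}). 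Using $\beta_i \leq \beta_{i-1}$ then produces the factor $\beta_{i-1}$ (which equals $1/2$ for $i \leq \underline{k}$ and $c/i$ for $i \geq \underline{k}+1$) times the coefficient $2\sigma + 2L_g(G_r + G_d)\zeta\ualpha$. Finally the product $\prod_{j=i+1}^k(1-\beta_j)$ is bounded through Lemma~\ref{lemma:prod.1.minus.beta}, split according to the two regimes $i \leq \underline{k}$ and $i > \underline{k}$.

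The main obstacle is this last step: recognizing and exploiting that $A_i + (1-\beta_i)B_i = \Ocal(\beta_{i-1})$. This is precisely where the {\tt Storm} estimator and the stabilization step interact---the $\zeta\beta_{i-1}$ scaling in line~\ref{line:stablelization} keeps $x_i$ and $x_{i-1}$ close enough for the Lipschitz cancellation to contribute the decisive $\beta_{i-1}$ factor. Without it the increments would only be $\Ocal(1)$ and the downstream Azuma--Hoeffding bound could not force $\|\epsilon_k\| \to 0$. The remaining difficulty is purely bookkeeping: aligning the deterministic product bounds from Lemma~\ref{lemma:prod.1.minus.beta} across the index ranges $i=1$, $2\leq i\leq\underline{k}$, and $\underline{k}<i\leq k$ with the stated constants (tolerating mild looseness such as $\sigma \leq 2\sigma$).
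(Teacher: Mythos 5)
Your proposal is correct and follows essentially the same route as the paper's proof: unroll the {\tt Storm} recursion $\epsilon_i = A_i + (1-\beta_i)B_i + (1-\beta_i)\epsilon_{i-1}$ with $\epsilon_1 = A_1$ to obtain the decomposition of part~2, verify the martingale property from Assumption~\ref{ass:proxstorm.varinace.reduced}(\ref{ass:proxstorm.varinace.reduced.1}) since the products of $(1-\beta_j)$ are deterministic, and bound the increments by combining the Lipschitz cancellation in $A_i+B_i$, the prox optimality condition together with the stabilization step giving $\norm{x_i - x_{i-1}} \leq \zeta\beta_{i-1}\ualpha(G_r+G_d)$, and Lemma~\ref{lemma:prod.1.minus.beta}. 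The only deviation is your split $A_i + (1-\beta_i)B_i = (A_i+B_i) - \beta_i B_i$, whereas the paper uses $\beta_i A_i + (1-\beta_i)(A_i+B_i)$, which keeps the full product $\prod_{j=i}^{k}(1-\beta_j)$ attached to the Lipschitz term and thereby recovers the stated constant $2\sigma + 2L_g(G_r+G_d)\zeta\ualpha$ exactly; your version incurs an extra factor of $2$ (or an index shift in Lemma~\ref{lemma:prod.1.minus.beta}) on the $L_g$ term, which, as you note, is harmless for all downstream results.
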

\begin{proof}
  Consider part~\ref{lemma:grad.error.martingale}.  We have  $S_{k0} = e_{k0} = 0$, and for all $1 \leq t \leq k$, one finds $S_{kt}-S_{k(t-1)}=e_{kt}$, so that
\begin{equation}\label{eq:martingale}
\E{\xi\sim\Pcal}{S_{kt} | \Fcal_t} = \E{\xi\sim\Pcal}{S_{k(t-1)} + e_{kt} | \Fcal_t} = S_{k(t-1)} + \E{\xi\sim\Pcal}{e_{kt} | \Fcal_t}.
\end{equation} 
Assumption~\ref{ass:proxstorm.varinace.reduced}(\ref{ass:proxstorm.varinace.reduced.1}) implies that $\E{\xi\sim\Pcal}{e_{kt} | \Fcal_t}=0$, which may then be combined with \eqref{eq:martingale} to conclude that $\E{\xi\sim\Pcal}{S_{kt} | \Fcal_t} = S_{k(t-1)}$ for all $1 \leq t \leq k$.  On the other hand, for all $t > k$, we trivially have $\E{\xi\sim\Pcal}{S_{kt} | \Fcal_t} = \E{\xi\sim\Pcal}{S_{k(t-1)}| \Fcal_t} = S_{k(t-1)}$.  Therefore,  $\{S_{kt}\}_{t=0}^\infty$ forms a martingale. 

Consider part~\ref{lemma:grad.error.decomposition}. For all $k\geq 2$, one finds that
\begin{align*}
\epsilon_k & = d_k-\grad f(x_k)    \\
& = (1-\beta_k)\epsilon_{k-1} + A_k + (1-\beta_k)B_{k} \\
& = (1-\beta_k)(1-\beta_{k-1})\epsilon_{k-2} + (1-\beta_k)A_{k-1} + A_k + (1-\beta_k) (1-\beta_{k-1})B_{k-1} + (1-\beta_k)B_{k}\\
& = \left(\prod_{j=2}^{k}(1-\beta_j)\right)\epsilon_{1} + \sum_{i=2}^{k}\left(\prod_{j=i+1}^{k}(1-\beta_j)\right)A_i + \sum_{i=2}^{k}\left(\prod_{j=i}^{k}(1-\beta_j)\right)B_i.
\end{align*}
Since $\epsilon_1 = A_1$, the desired conclusion follows that $\epsilon_k = \sum_{i=0}^k e_{ki} = S_{kk}$.

We now prove part~\ref{lemma:eki.norm.bound}.
Consider the following two cases: 
\paragraph{Case I:}For $i = 1$, it follows from the triangular inequality and Assumption~\ref{ass:proxstorm.varinace.reduced}(\ref{ass:proxstorm.varinace.reduced.4}) that, almost surely, one finds
\begin{align*}
    \norm{e_{k1}}
    &=\norm{\left(\prod_{j=2}^{k}(1-\beta_j)\right)\epsilon_{1}}\leq \left(\prod_{j=2}^{k}(1-\beta_j)\right)\norm{\epsilon_1}\\
    &= \left(\prod_{j=2}^{k}(1-\beta_j)\right)\norm{\frac{1}{m}\sum_{i'=1}^m\grad\ell(x_1;\xi_{1i'})-\grad f(x_1)}\\
    &\leq \sigma \left(\prod_{j=2}^{k}(1-\beta_j)\right). 
\end{align*}
It follows from Lemma~\ref{lemma:prod.1.minus.beta} that, almost surely, one finds
\begin{align*}
    \norm{e_{k1}} \leq \sigma \exp\left( - \frac{\underline{k} - 2}{2} \right) \left( \frac{\underline{k} + 1}{k + 2}\right)^{c}. 
\end{align*}

\paragraph{Case II:} For any $i$ with $2 \leq i \leq k$, it follows almost surely that
\begin{align}
&\phantom{iii} \norm{e_{ki}} \\
& =\norm{\left(\prod_{j=i+1}^{k}(1-\beta_j)\right)A_i + \left(\prod_{j=i}^{k}(1-\beta_j)\right)B_i}\nonumber\\
& =\norm{\left(\prod_{j=i+1}^{k}(1-\beta_j)\right)(1-\beta_i+\beta_i)A_i + \left(\prod_{j=i}^{k}(1-\beta_j)\right)B_i}\nonumber\\
& = \norm{\beta_i\left(\prod_{j=i+1}^{k}(1-\beta_j)\right)A_i + \left(\prod_{j=i}^{k}(1-\beta_j)\right)(A_i+B_i)}\nonumber\\
& \leq \sigma\beta_i\left(\prod_{j=i+1}^{k}(1-\beta_j)\right) + \left(\prod_{j=i}^{k}(1-\beta_j)\right)\norm{\frac{1}{m}\sum_{i'=1}^{m}\grad \ell(x_i;\xi_{ii'})- \frac{1}{m}\sum_{i'=1}^{m}\grad \ell(x_{i-1};\xi_{ii'}) - (\grad f(x_i)- \grad f(x_{i-1}))}\nonumber\\
& \leq \sigma\beta_i\left(\prod_{j=i+1}^{k}(1-\beta_j)\right) + 2L_g\left(\prod_{j=i}^{k}(1-\beta_j)\right)\norm{x_{i}-x_{i-1}},\label{eq:bound.eki.1}
\end{align}
where the first inequality holds by Assumption~\ref{ass:proxstorm.varinace.reduced}(\ref{ass:proxstorm.varinace.reduced.4}) and the second inequality holds by Assumption~\ref{ass:generic.assumptions}(\ref{ass:generic.assumptions.2}). Since $y_{i-1}=\prox{\alpha_{i-1}r}{x_{i-1}-\alpha_{i-1}d_{i-1}}$, it follows from \citet[Theorem 6.39]{beck2017first} that $\frac{x_{i-1}-y_{i-1}}{\alpha_{i-1}}-d_{i-1}\in\partial r(y_{i-1})$.  Hence, it follows from  Assumption~\ref{ass:proxstorm.varinace.reduced}(\ref{ass:proxstorm.varinace.reduced.3}) that  $\norm{\frac{x_{i-1}-y_{i-1}}{\alpha_{i-1}}-d_{i-1}}\leq G_r$. It follows from line~\ref{line:stablelization} of Algorithm~\ref{algo:main},  Assumption~\ref{ass:proxstorm.varinace.reduced}(\ref{ass:proxstorm.varinace.reduced.5}), the triangular inequality, and the previous inequality that
$$
\begin{aligned}
 \norm{x_{i} - x_{i-1}} 
 & = \zeta \beta_{i-1}\norm{x_{i-1}-y_{i-1}} \\
 & \leq \zeta \beta_{i-1}(\norm{x_{i-1}-y_{i-1}-\alpha_{i-1}d_{i-1}} + \alpha_{i-1}\norm{d_{i-1}})\\
 &  \leq \zeta \beta_{i-1}\alpha_{i-1}(G_r+G_d).
\end{aligned}
$$
Combining \eqref{eq:bound.eki.1} and the above inequality, one finds almost surely that
\begin{equation}\label{eq:bound.eki.2}
\begin{aligned}
\norm{e_{ki}} 
& \leq \sigma\beta_i\left(\prod_{j=i+1}^{k}(1-\beta_j)\right) + 2L_g(G_r+G_d)\zeta \beta_{i-1}\alpha_{i-1}\left(\prod_{j=i}^{k}(1-\beta_j)\right).
\end{aligned}   
\end{equation}
It follows from Assumption~\ref{ass:algo.choice} that $\beta_k = \min\{ \frac{1}{2}, ~  \frac{c}{(k+1)} \}$ for all $k\geq 2$. Therefore, since $2 (1 - \beta_i) \geq 1$, one finds
\begin{equation}\label{eq:bound.eki.3}
    \beta_{i}\left(\prod_{j=i+1}^{k}(1-\beta_j)\right)
    \leq 2 \beta_{i} \left(\prod_{j=i}^{k}(1-\beta_j)\right) \leq 2 \beta_{i - 1} \left(\prod_{j=i}^{k}(1-\beta_j)\right) ,
\end{equation}
It follows from \eqref{eq:bound.eki.2}, \eqref{eq:bound.eki.3}, and $\alpha_i\equiv \ualpha$ that almost surely one finds
$$
\norm{e_{ki}}\leq \big(2 \sigma + 2L_g  (G_r+G_d)\zeta \ualpha \big) \beta_{i-1}\left(\prod_{j=i}^{k}(1-\beta_j)\right).
$$
Applying Lemma~\ref{lemma:prod.1.minus.beta} to the above inequality, one finds almost surely that
\begin{align*}
    \norm{e_{ki}} \leq \left\{
    \begin{array}{lll}
        \big(2 \sigma + 2L_g  (G_r+G_d)\zeta \ualpha \big) \frac{1}{2} \exp\left( - \frac{\underline{k} - i}{2} \right) \left( \frac{\underline{k} + 1}{k + 2}\right)^{c} & \textup{if} ~~ 2 \leq i \leq \underline{k}, \\
        \big(2 \sigma + 2L_g  (G_r+G_d)\zeta \ualpha \big) \frac{c}{i} \left( \frac{i + 1}{k + 2}\right)^{c} & \textup{if} ~~ \underline{k} + 1 \leq i \leq k.  \\
    \end{array}
    \right. 
\end{align*}
Combining the two cases above give the results claimed in part~\ref{lemma:eki.norm.bound}.
\end{proof}

The last lemma bounds $\sum_{i=1}^k \norm{e_{ki}}^2$, which will appear in the Azuma-Hoeffding type inequality.
\begin{lemma} \label{lemma:sum.eki.norm}
Under Assumption~\ref{ass:proxstorm.varinace.reduced} and Assumption~\ref{ass:algo.choice}, there exits a constant $C_1>0$ that is independent of $k$ such that, for all $k\geq\underline{k}=\lceil (2c) - 1\rceil$, one finds
\begin{align*}
    \sum_{i=1}^k \norm{e_{ki}}^2  \leq C_1 \big( \sigma + L_g  (G_r+G_d)\zeta \ualpha \big)^2 \max\left\{ \left( \frac{\underline{k} + 1}{k + 2}\right)^{2c}, ~ \frac{c^2}{k+2} \right\} \ \ \text{almost surely}.
\end{align*}
\end{lemma}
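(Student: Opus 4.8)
The plan is to bound the sum termwise using the three-case estimate on $\norm{e_{ki}}$ from Lemma~\ref{lemma:giant}(\ref{lemma:eki.norm.bound}), splitting $\sum_{i=1}^k\norm{e_{ki}}^2$ into the single term $i=1$, the block $2\le i\le\underline{k}$, and the tail $\underline{k}+1\le i\le k$. Throughout I abbreviate $D:=\sigma+L_g(G_r+G_d)\zeta\ualpha$, so that the $i=1$ bound carries the factor $\sigma\le D$ while the other two carry $2D$. The first two blocks will be shown to contribute a term of order $D^2\big(\tfrac{\underline{k}+1}{k+2}\big)^{2c}$, and the tail a term of order $D^2\tfrac{c^2}{k+2}$; adding them and using $a+b\le 2\max\{a,b\}$ then yields the claimed maximum with a single $k$-independent constant $C_1$.

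For the first block, squaring the $i=1$ and $2\le i\le\underline{k}$ bounds produces a common factor $\big(\tfrac{\underline{k}+1}{k+2}\big)^{2c}$ multiplied either by $\exp(-(\underline{k}-2))\le 1$ (using $\underline{k}\ge 2$, which holds since $c>1$) or by $\tfrac14\exp(-(\underline{k}-i))$. Since $\sum_{i=2}^{\underline{k}}\exp(-(\underline{k}-i))=\sum_{j=0}^{\underline{k}-2}e^{-j}\le(1-e^{-1})^{-1}$ is bounded by a universal constant, these two blocks together are at most $C' D^2\big(\tfrac{\underline{k}+1}{k+2}\big)^{2c}$ for an explicit $C'$ independent of $k$.

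The tail is the crux. Squaring the third-case bound and pulling out $(k+2)^{-2c}$ gives $\sum_{i=\underline{k}+1}^k\norm{e_{ki}}^2\le 4D^2 c^2 (k+2)^{-2c}\sum_{i=\underline{k}+1}^k (i+1)^{2c}/i^2$, so everything reduces to controlling $\sum_{i=\underline{k}+1}^k (i+1)^{2c}/i^2$ with constants independent of $k$ and with the correct power of $c$. The key observation is that $\underline{k}=\lceil 2c-1\rceil$ forces $\underline{k}+1\ge 2c$, hence for every $i\ge\underline{k}+1$ we have $(1+1/i)^{2c}\le(1+1/(2c))^{2c}\le e$, giving $(i+1)^{2c}\le e\,i^{2c}$ and therefore $(i+1)^{2c}/i^2\le e\,i^{2c-2}$. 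Since $c>1$ makes $x\mapsto x^{2c-2}$ increasing, an integral comparison gives $\sum_{i=\underline{k}+1}^k i^{2c-2}\le\int_0^{k+1}x^{2c-2}\,dx=(k+1)^{2c-1}/(2c-1)\le (k+2)^{2c-1}/(2c-1)$. Substituting back and using $c^2/(2c-1)\le c^2$ bounds the tail by $4e\,D^2 c^2/(k+2)$.

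Combining the three blocks and absorbing all $k$-independent constants into a single $C_1$ completes the proof. The only genuinely delicate point is the tail estimate: a naive $(i+1)^{2c}\le 2^{2c}i^{2c}$ would leave a $2^{2c}$ factor that grows with $c$, so it is essential to exploit the coupling $\underline{k}+1\ge 2c$ to replace it by the universal constant $e$; the remaining steps (the integral comparison and the elementary inequality $c^2/(2c-1)\le c^2$) are routine.
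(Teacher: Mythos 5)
Your proof is correct, and it follows the same skeleton as the paper's: the identical three-block split ($i=1$, $2\leq i\leq \underline{k}$, $\underline{k}+1\leq i\leq k$) based on Lemma~\ref{lemma:giant}(\ref{lemma:eki.norm.bound}), the same geometric-series bound $\sum_{i=2}^{\underline{k}}\exp(-(\underline{k}-i))\leq \frac{e}{e-1}$ for the middle block, and an integral comparison for the tail. Where you genuinely diverge is the dominant tail estimate, and there your argument is sharper than the paper's. The paper bounds the enlarged sum $\sum_{i=1}^{k}(i+1)^{2c}/i^2$ (see~\eqref{eq:sum-of-polynomial}) by writing $(i+1)^{2c}\leq (1.5\,i)^{2c}$ for $i\geq 2$ and treating $i=1$ separately as $4^c$, so its constant $C_{11}$ absorbs factors like $(1.5)^{2c}$ and $4^c$: these are independent of $k$, which is all the lemma claims, but they grow exponentially in $c$, and in the later theorems $c = 2\theta L_g^2/(\zeta\mu_f^2)$ scales with the squared condition number. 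You instead keep the tail sum starting at $i\geq \underline{k}+1$ and exploit the coupling $\underline{k}+1\geq 2c$ to get $(1+1/i)^{2c}\leq (1+1/(2c))^{2c}\leq e$, replacing the $c$-exponential factor by the universal constant $e$; after the integral comparison $\sum_{i=\underline{k}+1}^{k} i^{2c-2}\leq (k+2)^{2c-1}/(2c-1)$ and the trivial $c^2/(2c-1)\leq c^2$, your $C_1$ is independent of both $k$ and $c$. Both proofs establish the lemma as stated, since only $k$-independence is required, but your version yields a strictly cleaner constant, which would matter if one tracked how the bound in Theorem~\ref{thm:error.in.gradient} degrades with the condition number $L_g/\mu_f$.
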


\begin{proof}
It follows from Lemma~\ref{lemma:giant}(\ref{lemma:eki.norm.bound}) that, almost surely,
\begin{align}
    \sum_{i=1}^k \norm{e_{ki}}^2 
    = & ~  \norm{e_{k1}}^2 + \sum_{i=2}^{\underline{k}} \norm{e_{ki}}^2 + \sum_{i= \underline{k} + 1}^{k} \norm{e_{ki}}^2 \nonumber\\
    \leq & ~ \sigma^2 \exp\left( - (\underline{k} - 2) \right) \left( \frac{\underline{k} + 1}{k + 2}\right)^{2c} + \sum_{i = 2}^{\underline{k}} \big(2 \sigma + 2L_g  (G_r+G_d)\zeta \ualpha \big)^2 \frac{1}{4} \exp\left( - (\underline{k} - i) \right) \left( \frac{\underline{k} + 1}{k + 2}\right)^{2c} \nonumber\\
    & ~ + \sum_{i= \underline{k} + 1}^{k} \big(2 \sigma + 2L_g  (G_r+G_d)\zeta \ualpha \big)^2 \frac{c^2}{i^2} \left( \frac{i + 1}{k + 2}\right)^{2c}.\label{eq:sum.eki.norm.1}
\end{align}
With respect to each of three terms above, for some $C_{11}$ that is independent of $k$, one finds
\begin{align}
    & \sigma^2 \exp\left( - (\underline{k} - 2) \right) \cdot \left( \frac{\underline{k} + 1}{k + 2}\right)^{2c} =  \sigma^2 e^2 \exp(- \underline{k}) \cdot \left( \frac{\underline{k} + 1}{k + 2}\right)^{2c}\label{eq:sum.eki.norm.2} \\
    & \sum_{i = 2}^{\underline{k}} \big(2 \sigma + 2L_g  (G_r+G_d)\zeta \ualpha \big)^2 \frac{1}{4} \exp\left( - (\underline{k} - i) \right) \cdot \left( \frac{\underline{k} + 1}{k + 2}\right)^{2c} \leq  \big( \sigma + L_g  (G_r+G_d)\zeta \ualpha \big)^2 \frac{e}{e - 1} \cdot \left( \frac{\underline{k} + 1}{k + 2}\right)^{2c}\label{eq:sum.eki.norm.3} \\
    & \sum_{i= \underline{k} + 1}^{k} \big(2 \sigma + 2L_g  (G_r+G_d)\zeta \ualpha \big)^2 \frac{c^2}{i^2} \cdot \left( \frac{i + 1}{k + 2}\right)^{2c} 
    \leq   \frac{\big(2 \sigma + 2L_g  (G_r+G_d)\zeta \ualpha \big)^2 c^2 C_{11}}{k+2},
    \label{eq:sum.eki.norm.4}
\end{align}
where \eqref{eq:sum.eki.norm.3} holds since the geometric series $\sum_{i = 2}^{\underline{k}}  \exp\left( - (\underline{k} - i) \right) = \sum_{i=2}^{k}\frac{\exp(i)}{\exp(k)} = \frac{ e - e^{2 - \underline{k}}}{e - 1} \leq \frac{e}{e - 1}$ and \eqref{eq:sum.eki.norm.4} hold since 
\begin{align}
    \sum_{i = 1}^k \frac{(i + 1)^{2c}}{i^2} = & ~ 
    \sum_{i = 1}^{1} \frac{(i + 1)^{2c}}{i^2} + \sum_{i = 2}^{k} \frac{(i + 1)^{2c}}{i^2} \nonumber \\
    \leq & ~ 4^c + \sum_{i = 2}^{k} \frac{(1.5 i)^{2c}}{i^2} \nonumber \\
    \leq & ~ 4^c + (1.5)^{2c} \int_{i = 2}^{k + 1} i^{2c - 2} \mathrm{d}i \nonumber \\
    = & ~ 4^c + (1.5)^{2c} \left( \frac{(k + 1)^{2c - 1}}{2c - 1} - \frac{2^{2c - 1}}{2c - 1} \right) 
    \leq C_{11} (k+1)^{2c - 1} \leq C_{11} (k+2)^{2c - 1}.
    \label{eq:sum-of-polynomial}
\end{align}
Combining \eqref{eq:sum.eki.norm.1}-\eqref{eq:sum.eki.norm.4}, one finds almost surely that 
\begin{align*}
    \sum_{i=1}^k \norm{e_{ki}}^2  
    & \leq  \big( \sigma + L_g  (G_r+G_d)\zeta \ualpha \big)^2 \left( C_{12}\left( \frac{\underline{k} + 1}{k + 2}\right)^{2c} + \left(4C_{11}+\frac{e}{e-1}\right) \frac{c^2}{k+2} \right) \\
    & \leq \big( \sigma + L_g  (G_r+G_d)\zeta \ualpha \big)^2 \left(C_{12} + 4C_{11}+\frac{e}{e-1}\right)  \max\left\{ \left( \frac{\underline{k} + 1}{k + 2}\right)^{2c}, ~ \frac{c^2}{k+2} \right\},
\end{align*} 
where we use the fact that $\sigma^2 e^2 \exp(- \underline{k}) \leq C_{12} \big( \sigma + L_g  (G_r+G_d)\zeta \ualpha \big)^2$ for some $C_{12}>0$ that is independent of $k$. We complete the proof by setting $C_1=\left(C_{12} + 4C_{11}+\frac{e}{e-1}\right)$.
\end{proof}
Now, we are ready to formally prove Theorem~\ref{thm:error.in.gradient}.
\paragraph{Theorem~\ref{thm:error.in.gradient}.} 
Let Assumption~\ref{ass:generic.assumptions}--Assumption~\ref{ass:algo.choice} hold, let $\epsilon_k=d_k-\grad f(x_k)$ for all $k \in \N{}_{+}$, and define $\underline{k}=\lceil (2c) - 1\rceil$.  Then, for any $k \geq \underline{k}$ and any $\eta_k \in (0,1)$, the event $\Ecal_k := \{\norm{\epsilon_k} \leq U(k)\}$ holds with probability at least $1 - \eta_k$, where for some constant $C$ independent of $k$, one defines
\begin{align*}
   U(k) = C \big( \sigma + L_g  (G_r+G_d) \zeta \ualpha \big) \cdot \max\left\{ \left( \frac{\underline{k} + 1 }{k + 2}\right)^{c}, ~ \frac{c}{\sqrt{k+2}} \right\} \sqrt{\log\frac{2}{\eta_k}}.
\end{align*}
(Specifically, the constant is $C = \sqrt{2C_1}$, where $C_1$ is defined in Lemma~\ref{lemma:sum.eki.norm}.)

\begin{proof}
It follows from Lemma~\ref{lemma:sum.eki.norm} that almost surely one finds
$$
\sum_{i=1}^k \norm{e_{ki}}^2  \leq C_1 \big( \sigma + L_g  (G_r+G_d)\zeta \ualpha \big)^2 \max\left\{ \left( \frac{\underline{k} + 1}{k + 2}\right)^{2c}, ~ \frac{c^2}{k+2} \right\} =: h(k).
$$

Based on Lemma~\ref{lemma:giant}(\ref{lemma:grad.error.martingale}), we have for $k \geq \underline{k}$ that $\{S_{kt}\}_{t=0}^k$ forms a martingale with respect to the filtration $\{\Fcal_{t}\}_{t = 0}^{k}$. For any $\rho_k>0$, using the Azuma-Hoeffding type inequality~\citep[\href{https://arxiv.org/pdf/1208.2200.pdf}{Theorem 3.5}]{pinelis1994optimum}\footnote{See Remark~\ref{remark:apply.ah.thm} for details on applying this theorem.} on the martingale $\{S_{kt}\}_{t = 0}^k$,  together with $\norm{e_{ki}}_{\infty}\leq \norm{e_{ki}}$ ($e_{ki}$ is defined in  Lemma~\ref{lemma:giant}) and the fact that $S_{kk}=\epsilon_k$ (Lemma~\ref{lemma:giant}(\ref{lemma:grad.error.decomposition})), we have 
\begin{equation}\label{eq:azuma}
    \P{\norm{\epsilon_k}\geq \rho_k} = \P{\norm{S_{kk}}\geq \rho_k} \leq \P{\sup_{t\in[k]}\norm{S_{kt}}\geq \rho_k} 
    \leq 2\exp\left(-\frac{\rho_k^2}{2h(k)}\right).
\end{equation}

For any $\eta_k \in (0,1)$, by setting $\rho_k = U(k) = \sqrt{2 h(k) \log(2 / \eta_k)}$ in~\eqref{eq:azuma}, we have $\mathbb{P}\big[ \norm{\epsilon_k} \geq U(k) \big] \leq \eta_k$, which implies that the event  $\mathcal{E}_k = \{\norm{\epsilon_k} \leq U(k)\}$ holds with probability at least $1 - \eta_k$. This completes the proof. 
\end{proof}

\begin{remark}\label{remark:apply.ah.thm}
We define the $f$ used in~\citep[Theorem~3.5]{pinelis1994optimum} when cited in the proof of Theorem~\ref{thm:error.in.gradient} above as  $f=\{S_{k0}, S_{k1},\cdots,S_{kk},S_{kk},\dots\}$ with $f_j=S_{kj}$ for all $1\leq j\leq k$ and $f_j = S_{kk}$ for all $j > k$. As proved in Lemma~\ref{lemma:giant}(\ref{lemma:grad.error.martingale}), $f$ is a martingale. 
Consequently, the $d_j$ and $f^*$ appearing in \citep[Theorem~3.5]{pinelis1994optimum} are defined as $d_j=S_{kj}-S_{k(j-1)}=e_{kj}$ and $f^*=\sup_{j\in[k]}\{\norm{f_j}\}=\sup_{j\in[k]}\{\norm{S_{kj}}\}$, respectively. As proved in Lemma~\ref{lemma:sum.eki.norm}, we have $\sum_{j=1}^{k}\norm{d_j}^2_{\infty}\leq \sum_{j=1}^{k}\norm{d_j}^2_{2}=\sum_{j=1}^{k}\norm{e_{kj}}^2_{2}\leq h(k)$ almost surely. 
\end{remark}

\subsection{Proof of Corollary~\ref{cor:variance.all.iters}}\label{app:proof.for.corllary}
\paragraph{Corollary~\ref{cor:variance.all.iters}}Let $\eta_k =\frac{\eta_0}{k^2}$ for all $k\geq 1$ with $\eta_0 \in (0, 6 / \pi^2)$. Define the event $\Ecal_k := \{\norm{\epsilon_k} \leq U(k)\}$ and recall that $\underline{k}=\lceil (2c) - 1\rceil$. Under Assumption~\ref{ass:generic.assumptions}--Assumption~\ref{ass:algo.choice}, the event $\Ecal := \bigcap_{k\geq\underline{k}}^{\infty} \mathcal{E}_k$ holds with probability at least $1 - \frac{\eta_0 \pi^2}{6}$.
\begin{proof}
It follows from the stated conditions, the union bound from probability, and Theorem~\ref{thm:error.in.gradient} that
$$
\begin{aligned}
\P{\bigcap_{k=\underline{k}}^{\infty}\left\{\norm{\epsilon_k} \leq U(k)\right\}}
& = \P{\bigcap_{k=\underline{k}}^{\infty}\Ecal_k} = 1 - \P{\left(\bigcap_{k=\underline{k}}^{\infty}\Ecal_k\right)^c} \qquad(\text{here $c$ is the set complement operator})\\
& = 1 - \P{\bigcup_{k=\underline{k}}^{\infty}\Ecal_k^c}\geq 1 - \sum_{k\geq \underline{k}}^{\infty}\P{\Ecal_k^c}
= 1 - \sum_{k\geq \underline{k}}^{\infty}\P{\norm{\epsilon_k} > U(k)}\\
& \geq 1- \sum_{k\geq \underline{k}}^{\infty}\eta_k \geq 1-\sum_{k=1}^{\infty}\frac{\eta_0}{k^2} =  1 - \frac{\eta_0\pi^2}{6},
\end{aligned}
$$
where the last equality holds by the Basel equality $\sum_{k=1}^{\infty} \frac{1}{k^2}=\frac{\pi^2}{6}$.
\end{proof}
\subsection{Proof of Theorem~\ref{thm:iterate.complexity}} \label{app:iterate.complexity}

\paragraph{Theorem~\ref{thm:iterate.complexity}.} 
Let $\ualpha = \mu_f / L_g^2$, $\zeta \in (0, 2)$, $\theta\geq 2$, $c = (2\theta L_g^2) / (\zeta \mu_f^2) > 2$,  and $\underline{k}=\lceil 2c-1\rceil$. Set $\eta_k = \eta_0 / k^2$ for all $k\geq 1$ with $\eta_0 \in (0, 6 / \pi^2)$. Then, under Assumption~\ref{ass:generic.assumptions}--Assumption~\ref{ass:algo.choice}, there exists a constant $C_3>0$ independent of $k$, such that the event $\Ecal^x_{k}:=\left\{\norm{x_{k}-x^*}^2 \leq {\bar c}_1 \frac{\norm{x_{\underline{k}}-x^*}^2}{k^\theta} + {\bar c}_2\cdot\frac{\log\frac{2k}{\eta_0}}{k}\right\}$ with ${\bar c}_1:=(\underline{k} + 2)^\theta$ and ${\bar c}_2:=C_3\zeta\left(\frac{\mu_f^2}{L_g^4} + \frac{2}{L_g^2}\left(1+\frac{\mu_f}{L_g}\right)^2\right)(\sigma + L_g(G_r + G_d)\zeta \ualpha)^2$ satisfies
$$
\P{\bigcap_{k\geq\underline{k}}^{\infty}\Ecal^x_{k}} \geq 1 - \eta_0 \pi^2 / 6 > 0 \, .
$$

\begin{proof}
Since the proximal operator is non-expansive~\citep[Theorem 6.42]{beck2017first} and $x^*=\prox{\alpha_kr}{x^*-\alpha_k\grad f(x^*)}$, it follows that
\begin{align}
   \norm{y_{k}-x^*}^2
   & = \norm{\prox{\alpha_kr}{x_k-\alpha_kd_k} - \prox{\alpha_kr}{x^*-\alpha_k\grad f(x^*)}}^2\nonumber\\
   & \leq \norm{x_k-x^*-\alpha_k(d_k -\grad f(x^*))}^2\nonumber\\
   & = \norm{x_k-x^*}^2 - 2\alpha_k(x_k-x^*)^T(d_k -\grad f(x^*)) + \alpha_k^2\norm{d_k -\grad f(x^*)}^2\nonumber\\
   & = \norm{x_k-x^*}^2 - 2\alpha_k(x_k-x^*)^T(\epsilon_k + \grad f(x_k) -\grad f(x^*)) + \alpha_k^2\norm{d_k -\grad f(x^*)}^2. \label{eq:proxstorm.iter.expand1}
\end{align}
It follows from Assumption~\ref{ass:generic.assumptions} that $f$ is $\mu_f$-strongly convex, and therefore
\begin{equation}\label{eq:proxstorm.iter.expand2.pre}
    (x_k-x^*)^T(\grad f(x_k) -\grad f(x^*))\geq \mu_f\norm{x_k-x^*}^2.
\end{equation}
It follows from \eqref{eq:proxstorm.iter.expand1} that
\begin{align}
  &\norm{y_{k}-x^*}^2 \\
  & \leq \norm{x_k-x^*}^2 -  2\alpha_k(x_k-x^*)^T(\grad f(x_k) -\grad f(x^*)) - 2\alpha_k(x_k-x^*)^T\epsilon_k + \alpha_k^2\norm{d_k -\grad f(x^*)}^2 \nonumber\\
  & \overset{(i)}{\leq} (1- 2\mu_f\alpha_k)\norm{x_k-x^*}^2 - 2\alpha_k(x_k-x^*)^T\epsilon_k + \alpha_k^2\norm{d_k -\grad f(x^*)}^2\nonumber\\
  & =(1- 2\mu_f\alpha_k)\norm{x_k-x^*}^2 + 2\alpha_k(x^*-x_k)^T\epsilon_k + \alpha_k^2\norm{d_k -\grad f(x_k)+ \grad f(x_k)-\grad f(x^*)}^2\nonumber\\
  & \overset{(ii)}{=}(1- 2\mu_f\alpha_k)\norm{x_k-x^*}^2 + 2\alpha_k(x^*-x_k)^T\epsilon_k + \alpha_k^2\left(\norm{\epsilon_k}^2+ 2\epsilon_k^T(\grad f(x_k)-\grad f(x^*)) +\norm{\grad f(x_k)-\grad f(x^*)}^2\right)\nonumber\\
  &\overset{(iii)}{\leq}(1- 2\mu_f\alpha_k)\norm{x_k-x^*}^2 + 2\alpha_k\norm{x_k-x^*}\norm{\epsilon_k} + \alpha_k^2\left(\norm{\epsilon_k}^2 + 2L_g\norm{\epsilon_k}\norm{x_k-x^*}+L_g^2\norm{x_k-x^*}^2\right)\nonumber\\
  & = (1-2\mu_f\alpha_k+\alpha_k^2L_g^2)\norm{x_k-x^*}^2 + (2\alpha_k + 2L_g\alpha_k^2)\norm{x_k-x^*}\norm{\epsilon_k} + \alpha_k^2\norm{\epsilon_k}^2,
  \label{eq:proxstorm.iter.expand2}
\end{align}
where $(i)$ follows from~\eqref{eq:proxstorm.iter.expand2.pre}, $(ii)$ follows from the definition of $\epsilon_k$, and $(iii)$ follows from Assumption~\ref{ass:generic.assumptions} and the Cauchy-Schwarz inequality.
When the event $\mathcal{E}_k = \{\norm{\epsilon_k} \leq U(k)\}$ happens ($U(k)$ defined in Theorem~\ref{thm:error.in.gradient}), it follows from line~\ref{line:stablelization} in Algorithm~\ref{algo:main}, $\zeta\beta_k< 1$, \eqref{eq:proxstorm.iter.expand2}, and Theorem~\ref{thm:error.in.gradient} that
\begin{align}
  &\norm{x_{k+1}-x^*}^2\nonumber\\
  & = \norm{\zeta \beta_k(y_{k}-x^*) + (1-\zeta \beta_k)(x_k-x^*)}^2  \nonumber\\
  & \leq \zeta \beta_k\norm{y_k-x^*}^2 + (1- \zeta \beta_k)\norm{x_k-x^*}^2 \nonumber\\
  & \leq \zeta \beta_k\left((1-2\mu_f\alpha_k+\alpha_k^2L_g^2)\norm{x_k-x^*}^2 + (2\alpha_k + 2L_g\alpha_k^2)\norm{x_k-x^*}\norm{\epsilon_k} + \alpha_k^2\norm{\epsilon_k}^2\right) + (1- \zeta \beta_k)\norm{x_k-x^*}^2 \nonumber\\
  &\leq (1- \zeta \beta_k(2\mu_f\alpha_k-\alpha_k^2L_g^2))\norm{x_k-x^*}^2 + (2\alpha_k + 2L_g\alpha_k^2)\norm{x_k-x^*} \zeta \beta_kU(k) + \alpha_k^2 \zeta \beta_kU(k)^2; 
  \label{eq:proxstorm.iter.expand3} 
\end{align}
we emphasize that the first inequality above follows from the convexity of the 2-norm-squared. Therefore, \eqref{eq:proxstorm.iter.expand3} holds with probability at least $1-\eta_k$ since  the event $\mathcal{E}_k = \{\norm{\epsilon_k} \leq U(k)\}$ happens with probability at least $1-\eta_k$.

Define $s_k^2 = \norm{x_k-x^*}^2$ and since $\alpha_k \equiv \ualpha = \mu_f / L_g^2$, then \eqref{eq:proxstorm.iter.expand3} becomes
\begin{align}
    s_{k + 1}^2
    & \leq \left(1- \zeta \beta_k\frac{\mu_f^2}{L_g^2}\right)s_k^2 + \frac{2 \mu_f}{L_g^2} \left(1 + \frac{\mu_f}{L_g} \right)\zeta \beta_k U(k) s_k + \frac{\mu_f^2}{L_g^4}\zeta \beta_kU(k)^2. \nonumber\\
    &= (1 - c_0 \zeta \beta_k) s_k^2 + c_1 \zeta \beta_k U(k) s_k + c_2\zeta \beta_k U(k)^2,\label{eq:eq:proxstorm.iter.expand4}
\end{align}
with $c_0 = \frac{\mu_f^2}{L_g^2}, c_1 = \frac{2 \mu_f}{L_g^2} \left(1 + \frac{\mu_f}{L_g} \right)$, and $c_2 = \frac{\mu_f^2}{L_g^4}$.
The second term in the above inequality can be upper bounded as
\begin{align*}
    c_1 \zeta \beta_k U(k) s_k= 2 \left(\frac{c_1\sqrt{\rho\zeta\beta_k}}{2}s_k\right) \left(\frac{\sqrt{\zeta\beta_k}}{\sqrt{\rho}} U(k)\right) \leq \frac{\rho \zeta\beta_kc_1^2}{4}  s_k^2 + \frac{\zeta \beta_k}{\rho} U(k)^2 \;\;\text{ for all } \rho > 0, 
\end{align*}
by using Young's inequality. Combining this result with \eqref{eq:eq:proxstorm.iter.expand4}, one obtains
\begin{align*}
    & ~ s_{k + 1}^2 \leq \left[1 - \left(c_0 \zeta - \rho \zeta \frac{c_1^2}{4}\right) \beta_k\right] s_k^2 + \left[c_2 \zeta + \frac{\zeta}{\rho}\right] \beta_k U(k)^2. 
\end{align*}
Now setting $\rho = \frac{2 \mu_f^2}{ L_g^2 c_1^2}$, it follows from this inequality that
\begin{align}\label{eq:proxstorm.iter.expand5}
    & s_{k + 1}^2 \leq \left[ 1 - \frac{\zeta \mu_f^2}{2 L_g^2} \beta_k \right] s_k^2 + \left[ \frac{\zeta \mu_f^2}{L_g^4} + \frac{\zeta c_1^2 L_g^2}{2 \mu_f^2} \right]  \beta_k U(k)^2 = (1-\gamma_k)s_k^2 + c_3\beta_k U(k)^2,
\end{align}
where $\gamma_k=\frac{\zeta \mu_f^2}{2 L_g^2} \beta_k$ and $c_3=\frac{\zeta \mu_f^2}{L_g^4} + \frac{\zeta c_1^2 L_g^2}{2 \mu_f^2}$.

{\bf{Conditioning on the event $\mathcal{E} = \bigcap_{i\geq \underline{k}}^{\infty} \mathcal{E}_i$ happens}}, it follows from~\eqref{eq:proxstorm.iter.expand5}, for all $k\geq \underline{k}$, that
\begin{align}\label{eq:proxstorm.iter.expand6}
    s_{k + 1}^2 
    &\leq  (1 - \gamma_k) s_k^2 + c_3 \beta_k U(k)^2\nonumber \\
    & \leq (1 - \gamma_k)(1 - \gamma_{k-1})s_{k-1}^2 + c_3 \sum_{i=k-1}^k\left(\prod_{j=i+1}^{k}(1-\gamma_j)\beta_iU(i)^2\right)\nonumber\\
    &\leq  \text{(expanding recursively on $s_{k-1}$)}\nonumber\\
    & \leq \left[ \prod_{i = \underline{k}}^k (1 - \gamma_i) \right] \cdot s_{\underline{k}}^2 + c_3 \cdot \sum_{i = \underline{k}}^k \left[ \prod_{j = i + 1}^k (1 - \gamma_j) \right] \beta_i U(i)^2,
\end{align}
where we use the convention that $\prod_{i=l}^{u}a_i=1$ if $l>u$ for any $a_i\in\R{}$ and $(l,u)\in \Zmbb_{+}^2$.
Then using a similar argument as from Lemma~\ref{lemma:prod.1.minus.beta}, one can establish, for any $i\geq 2$, that
\begin{align*}
    \prod_{j = i}^k (1 - \gamma_j) \leq & ~  \exp\left(- \sum_{j = i}^k \gamma_j \right) = ~ \exp\left(- \frac{\zeta \mu_f^2}{2 L_g^2} \cdot \sum_{j = i}^k \min\left\{ \frac{1}{2}, ~  \frac{c}{j+1} \right\} \right) 
    \\
    = & ~ \exp\left(- \frac{\zeta \mu_f^2}{2 L_g^2} \cdot \frac{\underline{k} - \min\{\underline{k}, i\}}{2} - \frac{\zeta \mu_f^2}{2 L_g^2} \cdot \sum_{j = \max\{\underline{k}, i\}}^k \frac{c}{j + 1}  \right)  \\
    \leq & ~ 
    \exp\left(- \frac{\zeta \mu_f^2}{2 L_g^2} \cdot \frac{\underline{k} - \min\{\underline{k}, i\}}{2} \right) \cdot \left( \frac{\max\{\underline{k}, i\} + 2}{k + 1} \right)^{\zeta \mu_f^2 c / (2 L_g^2)}\\
    = & ~ 
    \exp\left(- \frac{\zeta \mu_f^2}{2 L_g^2} \cdot \frac{\underline{k} - \min\{\underline{k}, i\}}{2} \right) \cdot \left( \frac{\max\{\underline{k}, i\} + 2}{k + 1} \right)^{\theta}.
\end{align*}
Combing the above inequality with \eqref{eq:proxstorm.iter.expand6} we obtain, for any $k \geq \underline{k}$, that 
\begin{align}\label{eq:proxstorm.iter.expand7}
    s_{k + 1}^2 
    \leq & ~ \left( \frac{\underline{k} + 2}{k + 1} \right)^{} \cdot s_{\underline{k}}^2  + c_3 \cdot \sum_{i = \underline{k}}^{k} \left[ \left( \frac{(i + 1) + 2}{k + 1} \right)^{\theta} \right] \frac{c}{i + 1} U(i)^2.
\end{align}
It follows from Theorem~\ref{thm:error.in.gradient} that
\begin{equation}\label{eq:proxstorm.iter.expand8}
    U(i)^2 =
    \begin{cases}
    G^2\left(\frac{\underline{k}+1}{i+2}\right)^{2c}\log\frac{2}{\eta_i} & \text{if $i<\bar{k}$},\\
    G^2\frac{c^2}{i+2}\log\frac{2}{\eta_i} & \text{if $i\geq\bar{k}$,}
    \end{cases}
\end{equation}
where $G=C_2(\sigma + L_g(G_r + G_d)\zeta \ualpha)$ and $\bar{k}=\max\left\{\underline{k},\left\lceil\frac{(\underline{k}+1)^{2c/(2c-1)}}{c^{2/(2c-1)}}-2\right\rceil\right\}$. Then it follows from \eqref{eq:proxstorm.iter.expand8} that
\begin{align}
    & c_3 \cdot \sum_{i = \underline{k}}^{k} \left[ \left( \frac{(i + 1) + 2}{k + 1} \right)^{\theta} \right] \frac{c}{i + 1} U(i)^2 \nonumber\\
    & \leq \frac{c_3\cdot c\cdot G^2}{(k+1)^{\theta}}\left[\sum_{i=\underline{k}}^{\min\{\bar{k}-1, k\}}\frac{(i+3)^{\theta}}{i+1}\frac{(\underline{k}+1)^{2c}}{(i+2)^{2c}}\log\frac{2}{\eta_i} + \sum_{i=\min\{\bar{k}-1, k\}+1}^{k} \frac{(i+3)^{\theta}}{i+1}\frac{c^2}{i+2}\log\frac{2}{\eta_i}\right],\label{eq:proxstorm.iter.expand9}
\end{align}
where we use the convention that $\sum_{i=l}^{u}a_i=0$ if $l>u$ for any $a_i\in\R{}$ and $(l,u)\in \Zmbb_{+}^2$.

It follows from~\eqref{eq:proxstorm.iter.expand9} and $\theta\geq 2$, one obtains, for all $k\geq \underline{k}$, there exists constants $\{C_{30}, C_{31}, C_{32}, C_{34}\}\subset\R{}_{++}$, which are independent of $k$, such that,
\begin{align}
    & c_3 \cdot \sum_{i = \underline{k}}^{k} \left[ \left( \frac{(i + 1) + 2}{k + 1} \right)^{\theta} \right] \frac{c}{i + 1} U(i)^2 \nonumber\\
    & = \frac{c_3\cdot c\cdot G^2}{(k + 1 )^\theta}\left[\sum_{i=\underline{k}}^{\min\{\bar{k}-1, k\}}\frac{(i+3)^\theta}{i+1}\frac{(\underline{k}+1)^{2c}}{(i+2)^{2c}}\log\frac{2}{\eta_i} + \sum_{i=\min\{\bar{k}-1, k\}+1}^{k} \frac{(i+3)^\theta}{i+1}\frac{c^2}{i+2}\log\frac{2}{\eta_i}\right]\nonumber\\
    & =
    \begin{cases}
    \frac{c_3\cdot c\cdot G^2}{(k + 1 )^\theta}\left[\sum_{i=\underline{k}}^{k}\frac{(i+3)^\theta}{i+1}\frac{(\underline{k}+1)^{2c}}{(i+2)^{2c}}\log\frac{2}{\eta_i}\right] & \text{ if $\underline{k}\leq k<\bar k$},\\
    \frac{c_3\cdot c\cdot G^2}{(k + 1 )^\theta}\left[\sum_{i=\underline{k}}^{\bar{k}-1}\frac{(i+3)^\theta}{i+1}\frac{(\underline{k}+1)^{2c}}{(i+2)^{2c}}\log\frac{2}{\eta_i} + \sum_{i=\bar{k}}^{k} \frac{(i+3)^\theta}{i+1}\frac{c^2}{i+2}\log\frac{2}{\eta_i}\right] & \text{ if $\underline{k}\leq \bar k \leq k$,}
    \end{cases}\nonumber\\
    & \leq 
    \begin{cases}
    \frac{c_3\cdot c\cdot G^2(\underline{k}+1)^{2c}\log\frac{2}{\eta_k}}{(k + 1 )^\theta}\left[\sum_{i=\underline{k}}^{k}\frac{(i+3)^\theta}{i+1}\frac{1}{(i+2)^{2c}}\right] & \text{ if $\underline{k}\leq k<\bar k$}, \qquad \text{(due to $\eta_i\geq \eta_k$)}\\
    \frac{c_3\cdot c^3\cdot G^2\log\frac{2}{\eta_k}}{(k + 1 )^\theta}\left[C_{30} + \sum_{i=\bar{k}}^{k} \frac{(i+3)^\theta}{i+1}\frac{1}{i+2}\right] & \text{ if $\underline{k}\leq \bar k \leq k$}, \qquad \text{(due to $\eta_i\geq \eta_k$,  $\bar{k}$, and $\underline{k}$ are both constants)}
    \end{cases}\nonumber\\
    & \leq 
    \begin{cases}
    C_{31}\frac{c_3\cdot c\cdot G^2(\underline{k}+1)^{2c}\log\frac{2}{\eta_k}}{(k + 1 )^\theta}\left(\int_{1}^{k}t^{\theta-1-2c}dt\right) & \text{ if $\underline{k}\leq k<\bar k$}, \\
    C_{32}\frac{c_3\cdot c^3\cdot G^2\log\frac{2}{\eta_k}}{(k + 1 )^\theta}\left(\int_{1}^{k}t^{\theta-2}dt\right)& \text{ if $\underline{k}\leq \bar k \leq k$,}
    \end{cases}\nonumber\\
    & \leq 
    \begin{cases}
    C_{31}\frac{c_3\cdot c\cdot G^2(\underline{k}+1)^{2c}\log\frac{2}{\eta_k}}{(k + 1 )^\theta}\cdot\frac{1}{2c-\theta} & \text{ if $\underline{k}\leq k<\bar k$}, \qquad \text{(due to $c>\theta$)}\\
    C_{32} \frac{c_3\cdot c^3\cdot G^2\log\frac{2}{\eta_k}}{(k + 1)^\theta}k^{\theta-1} \cdot\frac{1}{\theta-1} & \text{ if $\underline{k}\leq \bar k \leq k$,} 
    \end{cases}\nonumber\\
    & \leq 
    \begin{cases}
    C_{31}\frac{c_3\cdot c\cdot G^2(\underline{k}+1)^{2c}\log\frac{2}{\eta_k}}{k + 1 }\cdot\frac{1}{2c-\theta} & \text{ if $\underline{k}\leq k<\bar k$}, \qquad \text{(due to $\theta\geq 2>1$)}\\
    C_{32} \frac{c_3\cdot c^3\cdot G^2\log\frac{2}{\eta_k}}{k + 1} \cdot\frac{1}{\theta-1} & \text{ if $\underline{k}\leq \bar k \leq k$,} 
    \end{cases}\nonumber\\
    &\leq c_3G^2C_{34}\frac{\log\frac{2}{\eta_k}}{k + 1}.\label{eq:proxstorm.iter.expand10}
\end{align}
Combining \eqref{eq:proxstorm.iter.expand7} with \eqref{eq:proxstorm.iter.expand10}, for all $k\geq\underline{k}$, gives
\begin{align}\label{eq:proxstorm.iter.expand11}
    s_{k + 1}^2 
    \leq & ~ \left( \frac{\underline{k} + 2}{k + 1} \right)^{\theta} \cdot s_{\underline{k}}^2  + c_3G^2C_{34}\frac{\log\frac{2}{\eta_k}}{k + 1},
\end{align}
which implies, for all $k\geq\underline{k}$, that
$$
\begin{aligned}
\norm{x_{k}-x^*}^2 
&\leq \left( \frac{\underline{k} + 2}{k} \right)^{\theta} \norm{x_{\underline{k}}-x^*}^2  + c_3G^2C_{34}\frac{2\log\frac{2k}{\eta_0}}{k}\\
&= \left( \frac{\underline{k} + 2}{k} \right)^{\theta}\norm{x_{\underline{k}}-x^*}^2  + \zeta\left(\frac{\mu_f^2}{L_g^4} + \frac{2}{L_g^2}\left(1+\frac{\mu_f}{L_g}\right)^2\right)(\sigma + L_g(G_r + G_d)\zeta \ualpha)^2C_{34}\frac{\log\frac{2k}{\eta_0}}{k}\\
&= {\bar c}_1 \frac{\norm{x_{\underline{k}}-x^*}^2}{k^\theta } + {\bar c}_2\frac{\log\frac{2k}{\eta_0}}{k},
\end{aligned}
$$
where we set $C_3=C_{34}$.
It follows from the definition of $\Ecal^x_k$ and the above result that $\P{\bigcap_{k\geq\underline{k}}^{\infty}\Ecal^x_{k}~\Huge{|}~\bigcap_{k\geq\underline{k}}^{\infty}\Ecal_{k}}=1$.
In conclusion,  for any given $\eta_0 \in (0, 6/\pi^2)$, it follows from the Bayes' Rule and Corollary~\ref{cor:variance.all.iters} that
\begin{align*}
    \P{\bigcap_{k\geq\underline{k}}^{\infty}\Ecal^x_{k}}
    & = \frac{\P{\bigcap_{k\geq\underline{k}}^{\infty}\Ecal^x_{k},\bigcap_{k\geq\underline{k}}^{\infty}\Ecal_{k}}}{\P{\bigcap_{k\geq\underline{k}}^{\infty}\Ecal_{k}~\Huge{|}~\bigcap_{k\geq\underline{k}}^{\infty}\Ecal^x_{k}}}\geq \P{\bigcap_{k\geq\underline{k}}^{\infty}\Ecal^x_{k},\bigcap_{k\geq\underline{k}}^{\infty}\Ecal_{k}} \\
    & = \P{\bigcap_{k\geq\underline{k}}^{\infty}\Ecal^x_{k}~\Huge{|}~\bigcap_{k\geq\underline{k}}^{\infty}\Ecal_{k}}\P{\bigcap_{k\geq\underline{k}}^{\infty}\Ecal_{k}} = \P{\bigcap_{k\geq\underline{k}}^{\infty}\Ecal_{k}}\geq 1 - \eta_0 \pi^2 / 6 > 0,
\end{align*}
which completes the proof.
\end{proof}

\subsection{Proof of Theorem~\ref{thm:id.sufficient.cond}} \label{app:id.sufficient.cond}

\paragraph{Theorem~\ref{thm:id.sufficient.cond}.} 
Given $\alpha>0$, $d\in\R{n}$, and the optimal solution $x^*$ to problem~\eqref{prob:main}, denote $z=x-\alpha d$ and $y=\prox{\alpha r}{z}$. Let  Assumption~\ref{ass:support.id} hold. If
$$
\norm{\frac{[z - x^*]_{g_i}}{\alpha}+\grad_{g_i} f(x^*)}<\delta^* \text{ for all } i\not\in \supp{x^*},
$$
then $\supp{y}\subseteq \supp{x^*}$. Furthermore, if $\norm{y-x^*}< \Delta^*$, then $\supp{x^*}\subseteq \supp{y}$ so that, in fact, $\supp{y} = \supp{x^*}$.
\begin{proof}
We start with the first claim $\supp{y}\subseteq \supp{x^*}$. It follows from Assumption~\ref{ass:support.id} and the triangular inequality that, for all $i\not\in \supp{x^*}$,  one has 
\begin{align*}
\norm{\frac{[z - x^*]_{g_i}}{\alpha}} 
&= \norm{\frac{[z -x^*]_{g_i}}{\alpha}+\grad_{g_i} f(x^*)-\grad_{g_i} f(x^*)}\\
&\leq \norm{\frac{[z -x^*]_{g_i}}{\alpha}+\grad_{g_i} f(x^*)}+\norm{\grad_{g_i} f(x^*)}\\
& < \delta^* + \norm{\grad_{g_i} f(x^*)} \leq \delta_{\min} + \norm{\grad_{g_i} f(x^*)}
< \lambda_i. 
\end{align*}
Since $[x^*]_{g_i}=0$ for all $i\not\in \supp{x^*}$, it follows that $\frac{[z-x^*]_{g_i}}{\alpha}\in \partial r_i([x^*]_{g_i})$\footnote{The subdifferential is given by $\partial \norm{x}=\{v\in\R{n}~|~\norm{v}\leq 1\}$.}. It follows from the optimality condition for the proximal problem~\citep[Theorem 6.39]{beck2017first} that this is true if and only if 
$[x^*]_{g_i} = \prox{\alpha r_i}{[z]_{g_i}}$ for all $i\not\in \supp{x^*}$,
which further implies $[y]_{g_i}=[x^*]_{g_i}=0$ for all $i\not\in \supp{x^*}$. Consequently, $(\supp{x^*})^c \subseteq (\supp{y})^c$, which implies $\supp{y}\subseteq \supp{x^*}$. 

Now we prove the second claim $\supp{x^*} \subseteq \supp{y}$. Note that $\norm{[y-x^*]_{g_i}} \leq \norm{y_k-x^*}$ for any $i\in[\ngrp]$. Therefore, when $\norm{y-x^*}<\Delta^*$, for $i\in \supp{x^*}$,
$[y]_{g_i}$ cannot be 0 for all $i\in \supp{x^*}$. Otherwise, $\Delta^*\leq \norm{[x^*]_{g_i}}<\Delta^*$ for $i\in \supp{x^*}$. This proves that $\supp{x^*}\subseteq \supp{y_k}$.
\end{proof}

\subsection{Proof of Theorem~\ref{thm:support.identification}} \label{app:support.identification}

\paragraph{Theorem~\ref{thm:support.identification}.} 
Let Assumption~\ref{ass:generic.assumptions}--Assumption~\ref{ass:support.id} hold, $\zeta \in (0, 2)$, $\theta\geq 2$, $c = (2\theta L_g^2) / (\zeta \mu_f^2) > 2$,  and $\underline{k}=\lceil 2c-1\rceil$. Consider the sequence $\{y_k\}$ of Algorithm~\ref{algo:main} and define the event $\Ecal_{k}^{\text{id}}=\{\Scal(y_k)=\Scal(x^*)\}$ for all $k\geq 1$. Then, there exists constants $\{C_{41}, C_{42}\}\subseteq\R{n}_{++}$ that are independent of $k$,
$k_{\delta^*}=(C_{41}/\delta^*)^{4}$ and $k_{\Delta^*}=(C_{42}/\Delta^*)^{4}$ such that, with $K := \max\{k_{\delta^*}, k_{\Delta^*}, \underline{k}$\}, it follows that
$$
\P{\bigcap_{k\geq K}^{\infty} \Ecal_{k}^{\text{id}} }\geq 1-\frac{\eta_0\pi^2}{6} > 0.
$$
\begin{proof}
Denote $z_k = x_k-\alpha_kd_k$ for all $k\geq 1$, then it follows from Assumption~\ref{ass:generic.assumptions}(\ref{ass:generic.assumptions.2}) and the triangular inequality that
\begin{align}
\norm{\frac{z_k-x^*}{\alpha_k}+\grad f(x^*)}
& \leq \frac{1}{\alpha_k}\norm{x_k-x^*} + \norm{d_k -\grad f(x^*) } \nonumber\\
& \leq \frac{1}{\alpha_k}\norm{x_k-x^*} + \norm{d_k -\grad f(x_k) } + \norm{\grad f(x_k) -\grad f(x^*) }\nonumber\\
& \leq \left(\frac{1}{\alpha_k}+L_g\right)\norm{x_k-x^*} + \norm{d_k -\grad f(x_k) }.\label{eq:finite.id.1}
\end{align}
\textbf{Conditioning on the events $\bigcap_{k\geq \underline{k}}^{\infty} \Ecal_i$ and $\bigcap_{k\geq \underline{k}}^{\infty} \Ecal^x_i$ happening} (with $\Ecal_k$ defined in Theorem~\ref{thm:error.in.gradient} and $\Ecal^x_k$ defined in Theorem~\ref{thm:iterate.complexity}), it follows from $\alpha_k\equiv \ualpha$ for all $k$ (Assumption~\ref{ass:algo.choice}), Corollary~\ref{cor:variance.all.iters}, and  Theorem~\ref{thm:iterate.complexity} that, there exists a constant $C_{41}>0$ that is independent of $k$, for all $k\geq\underline{k}$,
\begin{equation} \label{eq:id.support.ub}
    \left(\frac{1}{\alpha_k}+L_g\right)\norm{x_k-x^*} + \norm{d_k -\grad f(x_k) } \leq C_{41}\sqrt{\frac{\log k}{k}}.
\end{equation}
Combining \eqref{eq:finite.id.1} and \eqref{eq:id.support.ub}, we know for all $k\geq k_{\delta^*}$ that $C_{41}\sqrt{\frac{\log k}{k}}\leq C_{41}/k^4<C_{41}/k_{\delta^*}^4=\delta^*$
\footnote{We use the inequality $\sqrt{\frac{\log x}{x}}<\frac{1}{x^{1/4}}$ for all $x>1$.}. 
Together with Theorem~\ref{thm:id.sufficient.cond} and the definition of $y_k$ (line~\ref{line:yk} of Algorithm~\ref{algo:main}), we have $\supp{y_k}\subseteq\supp{x^*}$ for all $k\geq\max\{k_{\delta^*},\underline{k}\}$. 

It follows from the  non-expansiveness~\citep[Theorem 6.42]{beck2017first} of the proximal operator, $x^*=\prox{\alpha_kr}{x^*-\alpha_k\grad f(x^*)}$, the definition of $y_k$ (line~\ref{line:yk} of Algorithm~\ref{algo:main}), and the triangular inequality that
\begin{align}
\norm{y_k-x^*}
& = \norm{\prox{\alpha_kr}{x_k-\alpha_k\grad f(x_k)}-\prox{\alpha_kr}{x^*-\alpha_k\grad f(x^*)}}\nonumber\\
& \leq \norm{(x_{k}-x^*)-\alpha_k(d_k-\grad f(x^*))}\nonumber\\
& \leq \norm{x_{k}-x^*} + \alpha_k\norm{d_k-\grad f(x^*)}\label{eq:finite.id.2}.
\end{align}
Again, \textbf{conditioning on the events $\bigcap_{k\geq \underline{k}}^{\infty} \Ecal_i$ and $\bigcap_{k\geq \underline{k}}^{\infty} \Ecal^x_i$ happening}, it follows from $\alpha_k\equiv \ualpha$ for all $k$ (Assumption~\ref{ass:algo.choice}), Corollary~\ref{cor:variance.all.iters}, Theorem~\ref{thm:iterate.complexity}, and \eqref{eq:finite.id.2} that, there exist a constant $C_{42}>0$ that is independent of $k$, such that for all $k\geq\underline{k}$, $\norm{y_k-x^*}\leq C_{42}\sqrt{\frac{\log k}{k}}$. Therefore, when $k\geq k_{\Delta^*}$, it follows that $C_{42}\sqrt{\frac{\log k}{k}} \leq C_{42}/k^4<C_{42}/k_{\Delta^*}^4 = \Delta^*$.
Together with Theorem~\ref{thm:id.sufficient.cond} and the definition of $y_k$ (line~\ref{line:yk} of Algorithm~\ref{algo:main}), we have $\supp{x^*}\subseteq\supp{y_k}$ for all $k\geq\max\{k_{\Delta^*}, \underline{k}\}$.
Therefore, when $k\geq K=\max\{k_{\delta^*}, k_{\Delta^*}, \underline{k}\}$, together with the fact that $\P{\bigcap_{k\geq K}^{\infty} \Ecal_{k}^{\text{id}}~\Huge{|}~ \bigcap_{k\geq \underline{k}}^{\infty}\Ecal_i, \bigcap_{k\geq \underline{k}}^{\infty} \Ecal^x_i}=1$, it follows that 
\begin{align*}
    \P{\bigcap_{k\geq K}^{\infty} \Ecal_{k}^{\text{id}}} 
    &= \frac{\P{\bigcap_{k\geq K}^{\infty} \Ecal_{k}^{\text{id}}, \bigcap_{k\geq \underline{k}}^{\infty}\Ecal_i, \bigcap_{k\geq \underline{k}}^{\infty} \Ecal^x_i}}{\P{\bigcap_{k\geq \underline{k}}^{\infty}\Ecal_i, \bigcap_{k\geq \underline{k}}^{\infty} \Ecal^x_i ~\Huge{|}~ \bigcap_{k\geq K}^{\infty} \Ecal_{k}^{\text{id}} }}\geq \P{\bigcap_{k\geq K}^{\infty} \Ecal_{k}^{\text{id}}, \bigcap_{k\geq \underline{k}}^{\infty}\Ecal_i, \bigcap_{k\geq \underline{k}}^{\infty} \Ecal^x_i} \\
    &= \P{\bigcap_{k\geq K}^{\infty} \Ecal_{k}^{\text{id}}~\Huge{|}~ \bigcap_{k\geq \underline{k}}^{\infty}\Ecal_i, \bigcap_{k\geq \underline{k}}^{\infty} \Ecal^x_i}\P{\bigcap_{k\geq \underline{k}}^{\infty}\Ecal_i, \bigcap_{k\geq \underline{k}}^{\infty} \Ecal^x_i}\\
    &=\P{\bigcap_{k\geq \underline{k}}^{\infty}\Ecal_i, \bigcap_{k\geq \underline{k}}^{\infty} \Ecal^x_i}\geq 1 - \frac{\eta_0 \pi^2}{6},
\end{align*}
which completes the proof.
\end{proof}

\subsection{Proofs for additional lemmas}

\begin{lemma}\label{lemma:ass.implication}
Denote $\Fcal_1 = \sigma(x_1)$ and, for all $k \geq 2$, denote $\Fcal_k$ as the $\sigma$-algebra generated by the random variables $\{\{\Xi_{1,i}\}_{i=1}^m,\dots,\{\Xi_{(k-1),i}\}_{i=1}^m\}$ (of which $\{\{\xi_{1,i}\}_{i=1}^{m},\dots,\{\xi_{(k-1),i}\}_{i=1}^{m}\}$ is a realization) so that $\{\Fcal_k\}$ is a filtration.
If (i) there exists a constant $c_e>0$ such that for all $k\geq 1$, $\Pmbb\{\norm{d_k-\grad f(x_k)}\leq c_e~|~{\Fcal_k}\} = 1$ and  (ii) there exists a constant $c_{\alpha}$ such that for a given $\alpha>0$ and all $k\geq 1$, $\Pmbb\{\chi(x_k;\alpha)\leq c_{\alpha}~|~{\Fcal_k}\}=1$, then there exists a constant $G_d>0$ such that for all $k\geq 1$, it holds that $\Pmbb\{{\norm{d_k}\leq G_d}~|~{\Fcal_k}\}=1$.
\end{lemma}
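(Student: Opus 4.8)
The plan is to bound $\norm{d_k}$ by passing through $\grad f(x_k)$ and then controlling the gradient via the optimality measure $\chi(x_k;\alpha)$. First I would apply the triangle inequality to write $\norm{d_k} \leq \norm{d_k - \grad f(x_k)} + \norm{\grad f(x_k)}$; assumption (i) controls the first summand, giving $\norm{d_k - \grad f(x_k)} \leq c_e$ with conditional probability one given $\Fcal_k$. It then remains to produce, for each realization of $x_k$, a bound on $\norm{\grad f(x_k)}$ in terms of $\chi(x_k;\alpha)$.

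To do this, I would introduce $p_k := \prox{\alpha r}{x_k - \alpha \grad f(x_k)}$, so that by the definition in~\eqref{eq:chi.measure} one has $\chi(x_k;\alpha) = \tfrac{1}{\alpha}\norm{p_k - x_k}$. Using the optimality condition for the proximal operator (\citet[Theorem 6.39]{beck2017first}, exactly as invoked in the proof of Theorem~\ref{thm:error.in.gradient}), the point $p_k$ satisfies $\tfrac{(x_k - \alpha \grad f(x_k)) - p_k}{\alpha} \in \partial r(p_k)$, which rearranges to $\grad f(x_k) = \tfrac{x_k - p_k}{\alpha} - g_r$ for some $g_r \in \partial r(p_k)$. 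The triangle inequality then yields $\norm{\grad f(x_k)} \leq \chi(x_k;\alpha) + \norm{g_r}$.

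Next I would bound each term on the right. Assumption (ii) gives $\chi(x_k;\alpha) \leq c_\alpha$ with conditional probability one given $\Fcal_k$. For $\norm{g_r}$, I would use that the subdifferential of $r$ is uniformly bounded, which is the source of the constant $G_r$ in Assumption~\ref{ass:proxstorm.varinace.reduced}(\ref{ass:proxstorm.varinace.reduced.3}): for the weighted $\ell_1$ or weighted group $\ell_1$ norm, every element of $\partial r(y)$ has norm at most $G_r$ for every $y \in \R{n}$, so $\norm{g_r} \leq G_r$. Combining these estimates gives $\norm{d_k} \leq c_e + c_\alpha + G_r$, and I would set $G_d := c_e + c_\alpha + G_r$.

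Finally, for the conditional-probability statement, I would set $A_k = \{\norm{d_k - \grad f(x_k)} \leq c_e\}$ and $B_k = \{\chi(x_k;\alpha) \leq c_\alpha\}$; by (i) and (ii) each has conditional probability one given $\Fcal_k$, hence so does $A_k \cap B_k$, and on this event the chain of inequalities above holds deterministically, yielding $\Pmbb\{\norm{d_k} \leq G_d ~|~ \Fcal_k\} = 1$. The main subtlety to handle carefully is that the subgradient bound must be applied at the prox point $p_k$ rather than at the iterate $x_k$, which is why I rely on the global boundedness of $\partial r$ rather than on the literal statement of Assumption~\ref{ass:proxstorm.varinace.reduced}(\ref{ass:proxstorm.varinace.reduced.3}); everything else is a routine deterministic estimate combined with the intersection of two almost-sure events.
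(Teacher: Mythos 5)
Your proof is correct, and it takes a genuinely different route from the paper's. The paper works through the algorithm's actual prox output: it sets $\tilde y_k=\prox{\alpha r}{x_k-\alpha \grad f(x_k)}$, applies the subgradient bound at $y_k=\prox{\alpha r}{x_k-\alpha d_k}$ to get $\norm{d_k}\leq G_r + \norm{x_k-y_k}/\alpha$, and then splits $\norm{x_k-y_k}\leq \norm{x_k-\tilde y_k}+\norm{y_k-\tilde y_k}$, controlling the first term by $\alpha\chi(x_k;\alpha)$ and the second by $\alpha\norm{d_k-\grad f(x_k)}$ via non-expansiveness of the proximal operator \citep[Theorem 6.42]{beck2017first}. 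You instead peel off the estimator error first, $\norm{d_k}\leq c_e+\norm{\grad f(x_k)}$, and bound $\norm{\grad f(x_k)}\leq \chi(x_k;\alpha)+G_r$ using the subgradient relation at the point $p_k=\tilde y_k$; this avoids non-expansiveness entirely and, notably, never references $y_k$ or the algorithm's step size, so you do not need the paper's auxiliary choice $\alpha_k\equiv\alpha$ --- your bound is a statement purely about any estimator $d_k$ satisfying (i), together with the fixed $\alpha$ from (ii). Both arguments produce the identical constant $G_d=G_r+c_\alpha+c_e$, and both rest on the same reading of Assumption~\ref{ass:proxstorm.varinace.reduced}(\ref{ass:proxstorm.varinace.reduced.3}) as a global bound on $\partial r$ rather than a bound only at the iterates $x_k$ (the paper invokes it at $y_k$, you at $p_k$); your explicit acknowledgment of that subtlety is apt, since the paper glosses over it. Your closing intersection-of-almost-sure-events argument matches the paper's implicit handling and is sound.
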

\begin{proof}
To see why the implication holds, we define $\tilde y_k=\prox{\alpha r}{x_k-\alpha \grad f(x_k)}$ and we make an algorithmic choice $\alpha_k\equiv \alpha$ for all $k$. Since $y_{k}=\prox{\alpha r}{x_{k}-\alpha d_{k}}$, then $\frac{x_{k}-y_{k}}{\alpha}-d_{k}\in\partial r(y_{k})$. It follows from  Assumption~\ref{ass:proxstorm.varinace.reduced}.\ref{ass:proxstorm.varinace.reduced.3} that  $\norm{\frac{x_{k}-y_{k}}{\alpha}-d_{k}}\leq G_r$. By the triangle inequality, we have 
$$
\begin{aligned}
\norm{d_k}
& \leq G_r + \frac{\norm{x_k-y_k}}{\alpha}\leq G_r + \frac{\norm{x_k-\tilde y_k}}{\alpha} + \frac{\norm{y_k-\tilde y_k}}{\alpha}\\
& = G_r+\chi_k(\alpha) + \frac{\norm{\prox{\alpha r}{x_{k}-\alpha d_{k}}-\prox{\alpha r}{x_{k}-\alpha \grad f(x_k)}}}{\alpha}\\
& \leq G_r+\chi_k(\alpha) + \norm{d_k-\grad f(x_k)}\leq G_r + c_\alpha + c_e,
\end{aligned}
$$
where the penultimate inequality holds by the non-expansiveness of the proximal operator\citep[Theorem 6.42]{beck2017first}.
\end{proof}

\begin{lemma}\label{lemma:rda.related}
Consider the \rda{} algorithm 
with its update defined as
$$
x_{k+1} = \arg\min_{x\in\R{n}}\left\{d_k^Tx + r(x)+ \frac{\rho_k}{k}\norm{x}^2\right\} \text{ with } d_k = \frac{k-1}{k}d_{k-1} + \frac{1}{k}\grad\ell(x_k;\xi_k),
$$
where $\xi_k$ is a i.i.d sample from $\Pcal$.
\begin{enumerate}
    \item If $\rho_k=\ualpha\sqrt{k}$ for a given $k\geq 1$ and $\ualpha$ is defined as in Assumption~\ref{ass:algo.choice}, then the update can be equivalently written as
    $$
    x_{k+1}=\prox{\alpha_kr}{-\alpha_kd_k} \text{ with } \alpha_k = \frac{\sqrt{k}}{\underline{\alpha}}.
    $$\label{lemma:rda.related.1}
    \item Assume $r$ is $\mu_r>0$ strongly convex. Further, assume that there are constants $\{G,D\}\subset (0,\infty)$ such that, for all $k\geq 1$, it holds that $\norm{\grad\ell(x_k;\xi_k)}\leq G$ and $\norm{x^*}\leq D$.
    If $\rho_k=\ualpha\sqrt{k}$, then
    $$
    \E{}{\norm{x_k-x^*}^2}\leq \frac{2\left(\ualpha D^2+G^2/\ualpha\right)}{\mu_r}\frac{1}{ \sqrt{k}}.
    $$ Moreover, for any $\epsilon>0$ and $k\geq 1$, it holds that
    $$
    \P{\norm{x_k-x^*}\geq \epsilon} \leq\sqrt{ \frac{2\left(\ualpha D^2+G^2/\ualpha\right)}{\mu_r\epsilon^2}}\frac{1}{k^{1/4}}.
    $$\label{lemma:rda.related.2}
    \item  Assume $f$ and $r$ are $\mu_f > 0$ and $\mu_r>0$ strongly convex, respectively. Further, assume that there are constants $\{G,D\}\subset (0,\infty)$ such that, for all $k\geq 1$, it holds that $\norm{\grad\ell(x_k;\xi_k)}\leq G$ and $\norm{x^*}\leq D$.
    If $\rho_k=\ualpha\sqrt{k}$, then for all $k\geq 1$, it holds that
    $$
    \P{\supp{x_{k+1}}=\supp{x^*}} \geq 1 -\eta_k^{\rda{}},
    $$
    where $\eta_k^{\rda{}}=\max\left\{\Ocal\left(\frac{1}{\delta^*\cdot k^{1/4}}\right), \Ocal\left(\frac{1}{\Delta^*\cdot k^{1/4}}\right)\right\}$.\label{lemma:rda.related.3}
\end{enumerate}
\end{lemma}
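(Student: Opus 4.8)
The plan is to combine the RDA iterate convergence rate of Lemma~\ref{lemma:rda.related}(\ref{lemma:rda.related.2}) with the deterministic sufficient conditions for support identification in Theorem~\ref{thm:id.sufficient.cond}, applied along the RDA sequence. Using part~(\ref{lemma:rda.related.1}) I would first rewrite the update as $x_{k+1}=\prox{\alpha_k r}{-\alpha_k d_k}$ with $\alpha_k=\sqrt{k}/\ualpha$, so that Theorem~\ref{thm:id.sufficient.cond} applies with $\alpha=\alpha_k$, $z=z_k:=-\alpha_k d_k$, and $y=x_{k+1}$ (invoking Assumption~\ref{ass:support.id} so that $\delta^*>0$). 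The theorem then guarantees $\supp{x_{k+1}}=\supp{x^*}$ whenever both of the following events occur: (a) $\norm{\tfrac{[z_k-x^*]_{g_i}}{\alpha_k}+\grad_{g_i} f(x^*)}<\delta^*$ for all $i\notin\supp{x^*}$, and (b) $\norm{x_{k+1}-x^*}<\Delta^*$. A union bound gives $\P{\supp{x_{k+1}}\neq\supp{x^*}}\leq\P{\text{(a) fails}}+\P{\text{(b) fails}}$, so it remains to bound each failure probability by $\Ocal(1/(\delta^* k^{1/4}))$ and $\Ocal(1/(\Delta^* k^{1/4}))$, respectively.

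Condition (b) is immediate from Lemma~\ref{lemma:rda.related}(\ref{lemma:rda.related.2}) applied at index $k+1$ with $\epsilon=\Delta^*>0$, which yields $\P{\norm{x_{k+1}-x^*}\geq\Delta^*}\leq \sqrt{2(\ualpha D^2+G^2/\ualpha)/(\mu_r(\Delta^*)^2)}\,(k+1)^{-1/4}=\Ocal(1/(\Delta^* k^{1/4}))$. The heart of the argument is condition (a). Since $\norm{[v]_{g_i}}\leq\norm{v}$ for every group, it suffices to bound the full-vector quantity, and because $z_k=-\alpha_k d_k$ one has $\tfrac{z_k-x^*}{\alpha_k}+\grad f(x^*)=-(d_k-\grad f(x^*))-x^*/\alpha_k$, hence $\norm{\tfrac{z_k-x^*}{\alpha_k}+\grad f(x^*)}\leq\norm{d_k-\grad f(x^*)}+\ualpha D/\sqrt{k}$. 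I would then split $d_k-\grad f(x^*)=\tfrac{1}{k}\sum_{i=1}^k(\grad\ell(x_i;\xi_i)-\grad f(x_i))+\tfrac{1}{k}\sum_{i=1}^k(\grad f(x_i)-\grad f(x^*))$. The first sum is an average of martingale differences that are mean-zero given $\Fcal_i$ by Assumption~\ref{ass:proxstorm.varinace.reduced}(\ref{ass:proxstorm.varinace.reduced.1}) and bounded in norm by $2G$ (since $\norm{\grad\ell(x_i;\xi_i)}\leq G$ forces $\norm{\grad f(x_i)}\leq G$); the cross terms vanish, so by Jensen's inequality its expected norm is at most $2G/\sqrt{k}$.

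For the second sum I would use $L_g$-Lipschitz continuity of $\grad f$ (Assumption~\ref{ass:generic.assumptions}(\ref{ass:generic.assumptions.2})) to bound it by $\tfrac{L_g}{k}\sum_{i=1}^k\norm{x_i-x^*}$, take expectations, and insert the first-moment estimate $\E{}{\norm{x_i-x^*}}\leq(\E{}{\norm{x_i-x^*}^2})^{1/2}\leq\sqrt{C}\,i^{-1/4}$ with $C=2(\ualpha D^2+G^2/\ualpha)/\mu_r$ from Lemma~\ref{lemma:rda.related}(\ref{lemma:rda.related.2}); since $\sum_{i=1}^k i^{-1/4}=\Ocal(k^{3/4})$, this term is $\Ocal(k^{-1/4})$. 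Combining the two sums with the $\ualpha D/\sqrt{k}$ contribution gives $\E{}{\norm{\tfrac{z_k-x^*}{\alpha_k}+\grad f(x^*)}}=\Ocal(k^{-1/4})$, and Markov's inequality with threshold $\delta^*$ yields $\P{\text{(a) fails}}\leq\Ocal(1/(\delta^* k^{1/4}))$. Adding the two failure probabilities produces $\eta_k^{\rda{}}=\max\{\Ocal(1/(\delta^* k^{1/4})),\,\Ocal(1/(\Delta^* k^{1/4}))\}$, which is the claim.

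I expect the main obstacle to be controlling the iterate-average term $\tfrac{L_g}{k}\sum_{i=1}^k\norm{x_i-x^*}$ cleanly: this is precisely what forces the overall $k^{-1/4}$ rate, whereas the martingale and $x^*/\alpha_k$ contributions are faster ($k^{-1/2}$). It is essential here to use the first-moment (Jensen) form of the part~(\ref{lemma:rda.related.2}) bound together with the integral estimate $\sum_{i\leq k} i^{-1/4}=\Ocal(k^{3/4})$, since a crude worst-case replacement of $\norm{x_i-x^*}$ would not decay. The role of strong convexity of $f$ is limited to guaranteeing that $x^*$, and hence $\supp{x^*}$, $\delta^*$, and $\Delta^*$, are well defined and unique; the quantitative work rests on strong convexity of $r$ (through part~(\ref{lemma:rda.related.2})), the Lipschitz and boundedness hypotheses, and Assumption~\ref{ass:support.id}. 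Finally, I would note that because $\sum_k \eta_k^{\rda{}}$ diverges, this per-iterate bound does not upgrade to a consistent identification statement, explaining the gap with Theorem~\ref{thm:support.identification}.
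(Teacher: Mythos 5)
Your argument for part~\ref{lemma:rda.related.3} is correct and shares its skeleton with the paper's proof: both rewrite the update via part~\ref{lemma:rda.related.1}, apply Theorem~\ref{thm:id.sufficient.cond} with $z_k=-\alpha_k d_k$, handle the $\Delta^*$-condition by part~\ref{lemma:rda.related.2} with $\epsilon=\Delta^*$, and finish with a union bound. The genuine difference is in how the $\delta^*$-condition is controlled. The paper does not prove that step itself: it introduces the event $\Ecal^{\rda{}}_{k,1}=\{\norm{d_k-\grad f(x^*)}<\delta^*/2\}$ and cites \citet[Theorem 11, equation (31)]{lee2012manifold} for the $\Ocal\big(\tfrac{1}{\delta^*k^{1/4}}\big)$ tail bound. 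You instead derive it from scratch: decompose $d_k-\grad f(x^*)$ into the martingale average $\tfrac{1}{k}\sum_{i\le k}\big(\grad\ell(x_i;\xi_i)-\grad f(x_i)\big)$, whose expected norm is $\Ocal(G/\sqrt{k})$ because the increments are conditionally mean-zero, bounded by $2G$, and have vanishing cross terms, plus the Lipschitz term $\tfrac{L_g}{k}\sum_{i\le k}\norm{x_i-x^*}$, which the first-moment bound $\E{}{\norm{x_i-x^*}}\le\big(\E{}{\norm{x_i-x^*}^2}\big)^{1/2}=\Ocal(i^{-1/4})$ from part~\ref{lemma:rda.related.2} and the estimate $\sum_{i\le k}i^{-1/4}=\Ocal(k^{3/4})$ reduce to $\Ocal(k^{-1/4})$; Markov's inequality then gives the same tail. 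This buys self-containedness (no reliance on the Lee--Wright theorem), and it also makes explicit the $-x^*/\alpha_k$ offset caused by RDA's prox being centered at the origin rather than at $x_k$, which the paper's event $\Ecal^{\rda{}}_{k,2}$ treats less transparently. One caveat: to bound the martingale increments by $2G$ you need $\norm{\grad f(x_i)}\le G$, which requires the hypothesis $\norm{\grad\ell(x_i;\xi)}\le G$ to hold almost surely over $\xi$ (then apply Jensen to the conditional expectation), not merely at the realized sample $\xi_i$; the paper implicitly needs the same strengthening to invoke Lee--Wright.

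The gap is that the lemma has three parts, and your proposal establishes only the third, using the first two as inputs. Part~\ref{lemma:rda.related.1} is a short completion-of-squares computation, so that omission is cosmetic. Part~\ref{lemma:rda.related.2} is not: the paper obtains $\E{}{\norm{x_k-x^*}^2}\le\tfrac{2(\ualpha D^2+G^2/\ualpha)}{\mu_r}\tfrac{1}{\sqrt{k}}$ by citing the RDA regret analysis of \citet[Equation 22 and Corollary 2]{xiao2009dual} --- this is exactly where strong convexity of $r$ and the constants $G$ and $D$ enter --- and then applies Jensen's and Markov's inequalities. Since your proof of part~\ref{lemma:rda.related.3} invokes part~\ref{lemma:rda.related.2} twice (once for the $\Delta^*$ event and once for the first-moment iterate bounds inside the Lipschitz term), a complete proof of the lemma still requires that ingredient, either by citation as the paper does or by re-deriving the RDA convergence rate under strong convexity of $r$.
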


\begin{proof}
For part 1, let $\beta_k=\ualpha\sqrt{k}$. Then
\begin{align*}
    \arg\min_{x\in\R{n}}\left\{d_k^Tx + r(x)+ \frac{\rho_k}{k}\norm{x}^2\right\} & = \arg\min_{x\in\R{n}}\left\{d_k^Tx + r(x)+ \frac{1}{\frac{1}{\ualpha}\sqrt{k}}\norm{x}^2\right\} \\
    & = \arg\min_{x\in\R{n}}\left\{d_k^Tx + r(x)+ \frac{1}{\alpha_k}\norm{x}^2\right\} \\
    & = \arg\min_{x\in\R{n}}\left\{\frac{1}{\alpha_k}\norm{x+\alpha_kd_k}^2 + r(x)\right\} = \prox{\alpha_kr}{-\alpha_kd_k}.
\end{align*}
For part 2, it follows from \citet[Equation 22]{xiao2009dual} and \citet[Corollary 2]{xiao2009dual} that for all $k\geq 1$,
$$
\E{}{\norm{x_k-x^*}^2}\leq \frac{2}{\mu_r k}\left(\ualpha D^2+G^2/\ualpha\right)\sqrt{k} = \frac{2\left(\ualpha D^2+G^2/\ualpha\right)}{\mu_r}\frac{1}{ \sqrt{k}}.
$$
It follows from Jensen' inequality that
$$
\E{}{\norm{x_k-x^*}}\leq \sqrt{\E{}{\norm{x_k-x^*}^2}} \leq \sqrt{\frac{2\left(\ualpha D^2+G^2/\ualpha\right)}{\mu_r}}\frac{1}{k^{1/4}},
$$
which together with the Markov inequality implies that
$$
    \P{\norm{x_k-x^*}\geq \epsilon} \leq\sqrt{ \frac{2\left(\ualpha D^2+G^2/\ualpha\right)}{\mu_r\epsilon^2}}\frac{1}{k^{1/4}}.
$$
For part 3,
consider three events  $\Ecal^{\rda{}}_{k,1} := \{\norm{d_k-\grad f(x^*)}< \delta^* / 2 $\}, $\Ecal^{\rda{}}_{k,2} := \{ (1/\alpha_k + L_g)\norm{x_k-x^*}< \delta^* / 2\}$, and $\Ecal^{\rda{}}_{k,3} := \{ \norm{x_{k+1}-x^*}< \Delta^*\}$.  
It follows from~\citet[Theorem 11, equation (31)]{lee2012manifold} and part 2 of this lemma that
\begin{align}
    & \P{\left(\Ecal^{\rda{}}_{k,1}\right)^c}\leq \Ocal\left(\frac{1}{\delta^*\cdot k^{1/4}}\right), \label{eq:rda.1}\\
    & \P{\left(\Ecal^{\rda{}}_{k,2}\right)^c}\leq \Ocal\left(\frac{1}{\delta^*\cdot k^{3/4}}\right), \ \  \text{and} \label{eq:rda.2} \\
    & \P{\left(\Ecal^{\rda{}}_{k,2}\right)^c}\leq \Ocal\left(\frac{1}{\Delta^*\cdot k^{1/4}}\right).\label{eq:rda.3}
\end{align}
It follows from the union bound and \eqref{eq:rda.1}--\eqref{eq:rda.3} that
\begin{align*}
    \P{\Ecal^{\rda{}}_{k,1} \bigcap \Ecal^{\rda{}}_{k,2} \bigcap \Ecal^{\rda{}}_{k,3} }
    & = 1 - \P{\left(\Ecal^{\rda{}}_{k,1}\right)^c \bigcup \left(\Ecal^{\rda{}}_{k,2}\right)^c \bigcup \left(\Ecal^{\rda{}}_{k,3}\right)^c}\\
    &\geq 1 - \left(\P{\left(\Ecal^{\rda{}}_{k,1}\right)^c} + \P{\left(\Ecal^{\rda{}}_{k,2}\right)^c} + \P{\left(\Ecal^{\rda{}}_{k,3}\right)^c}\right) \\
    & = 1 - \max\left\{\Ocal\left(\frac{1}{\delta^*\cdot k^{1/4}}\right), \Ocal\left(\frac{1}{\Delta^*\cdot k^{1/4}}\right)\right\},
\end{align*}
which together with Theorem~\ref{thm:id.sufficient.cond} implies that, for any chosen $k\geq 1$,
\begin{equation*}
\P{\supp{x_{k+1}}=\supp{x^*}}\geq 1 - \eta_k^{\rda{}}, 
\end{equation*}
which completes the proof.
\end{proof}


\section{Experiments}~\label{appendix:exp}

\subsection{Discussions on the performance gaps in different methods}\label{appendix:exp.performance}

First, \proxsvrg{} performs poorly on test instances induced by the datasets phishing, rcv1, real-sim, and news20. It can be checked that these datasets cover different sample sizes and decision variable dimensions. We attribute the cause of poor performance to the inner loop length parameter, which is difficult to choose to work on all test instances. In the experiments, we set it to 1 for all cases to follow the original paper's experimental setting~\citep{xiao2014proximal}.

Second, \saga{} performed quite well on the first 32 test instances, where the memory limit is not violated, and failed on the remaining 48 test instances (marked as the darkest red) because the program terminates immediately due to memory limits being exceeded.

Third, \rda{} appears to perform poorly compared with \pstorm{} (\spstorm{}) because the prox step of \rda{} only applies to its initial point $x_0=0$ with updated search direction $-\alpha_kd_k$ (see Lemma~\ref{lemma:rda.related}(1)), whereas \pstorm{} (\spstorm{})  applies the prox step at the up-to-date iterate $x_k$. 

Finally, one can see that \spstorm{} significantly outperforms \pstorm{}. We attribute this to a combination of the stabilization that we introduced and that the step size for \pstorm{} was designed for nonconvex problems (for our tests, nonetheless, we fine-tuned the step size for \pstorm{} to be fair).

\subsection{Additional results}\label{appendix:exp.additional}

We visualize three metrics: the distance to the optimal solution $\norm{x_k-x^*}$ ($x^*$ is obtained by the \farsagroup{} algorithm), the error in the gradient evaluation $\epsilon_k$ (defined in Theorem~\ref{thm:error.in.gradient}), and the sparse structure of major iterates, 
which can be found in the first, second, and third column of \autoref{fig:convergence}, respectively.  The first metric measures the convergence speed of $\{x_k\}$, the second metric shows how fast the error in the stochastic gradient estimator $d_k$ (defined in Algorithm~\ref{algo:main} line~\ref{line:storm}) diminishing to zero, and the third metric visualizes the progress made with respect to support identification.

For demonstration, we only show results on six moderate-size datasets with randomly picked problem parameters $\Lambda=0.1$ and number of groups $\lfloor 0.5n \rfloor$. We remark that in some plots, lines that represent different algorithms could visually overlap. For example, the green line (\spstorm{}) and purple line (\saga{}) overlap in the first column for dataset phishing and rcv1.\footnote{The reason is that the numerical difference between $\norm{x_k-x^*}$ for \spstorm{} and \saga{} is of order $10^{-2}$.} We also emphasize that \saga{} does not appear in the \autoref{fig:convergence}(f) due to memory limitation.

From the first and the second column of \autoref{fig:convergence}, it can be observed that the rates at which the $x_k$ converges to $x^*$ and $\epsilon_k$ converges to $0$ seem to be bounded by $\Ocal(\sqrt{\log k/k})$, which matches our theoretical results in Theorem~\ref{thm:error.in.gradient} and Theorem~\ref{thm:iterate.complexity}. We can also observe that for the relatively large datasets rcv1 and real-sim, 1000 data passes is not enough to obtain accurate estimates of $x^*$, but a decent ratio of zeros groups is identified nonetheless.

\begin{figure*}
     \caption{Visualization of three metrics: the distance to optimal solution $\norm{x_k-x^*}$ (the first column), error in the gradient evaluation $\epsilon_k$ (the second column), and the progress of support identification on different datasets (the third column). We added a dotted reference line corresponding to $\sqrt{\log k / k}$ (for $k\geq 2$) for the plots in the first and second columns. In addition, we added a horizontal black reference line for the plots in the third column to indicate the number of zero groups at the optimal solution $x^*$. } 
     \resizebox{0.83\textwidth}{!}{
     \begin{subfigure}[b]{\textwidth}
         \includegraphics[height=3.5cm]{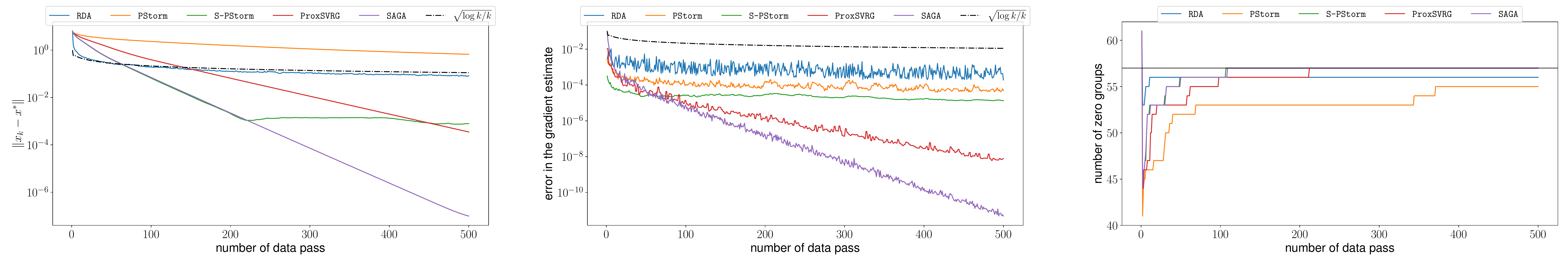}
         \caption{a9a}
     \end{subfigure}
    }
    \resizebox{0.83\textwidth}{!}{
     \begin{subfigure}[b]{\textwidth}
         \includegraphics[height=3.5cm]{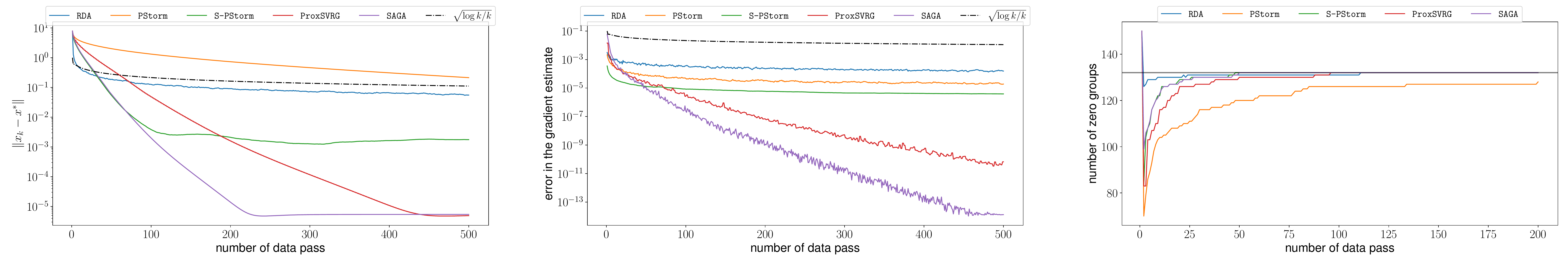}
         \caption{w8a}
     \end{subfigure}
    }
    \resizebox{0.83\textwidth}{!}{
     \begin{subfigure}[b]{\textwidth}
         \includegraphics[height=3.5cm]{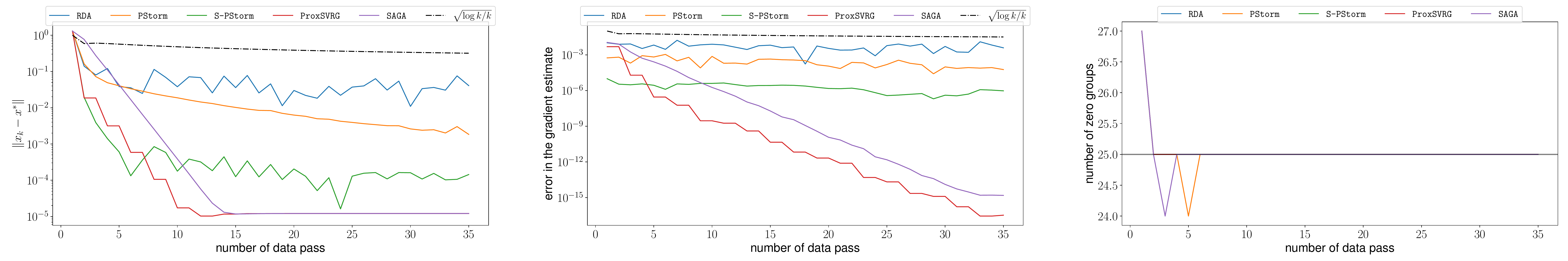}
         \caption{covtype}
     \end{subfigure}
    }    
    \resizebox{0.83\textwidth}{!}{
     \begin{subfigure}[b]{\textwidth}
         \includegraphics[height=3.5cm]{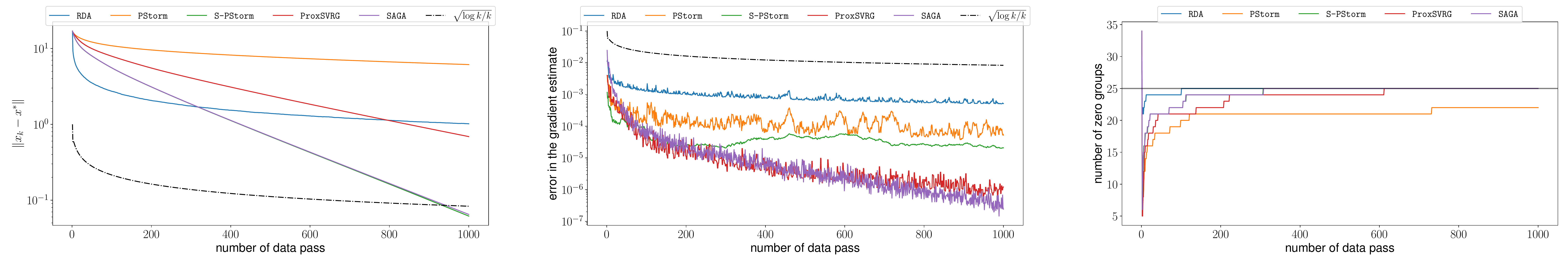}
         \caption{phishing}
     \end{subfigure}
    }
     \resizebox{0.83\textwidth}{!}{
     \begin{subfigure}[b]{\textwidth}
         \includegraphics[height=3.5cm]{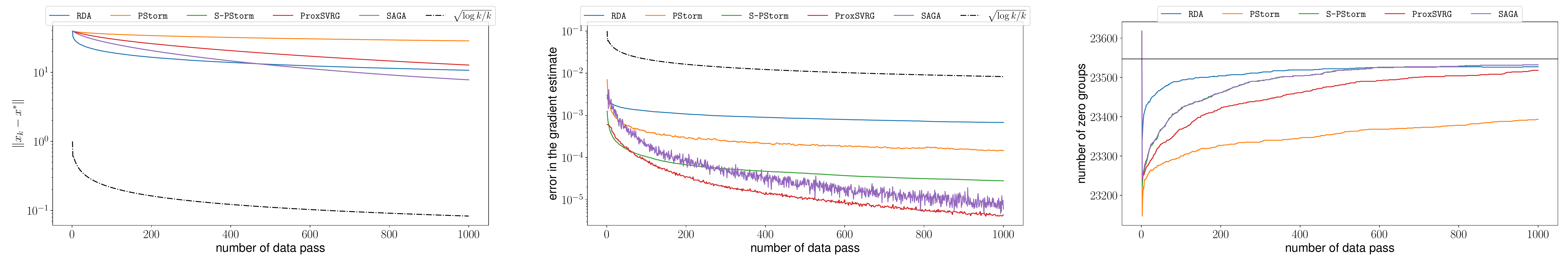}
         \caption{rcv1}
     \end{subfigure}
    }    
    \resizebox{0.83\textwidth}{!}{
     \begin{subfigure}[b]{\textwidth}
         \includegraphics[height=3.5cm]{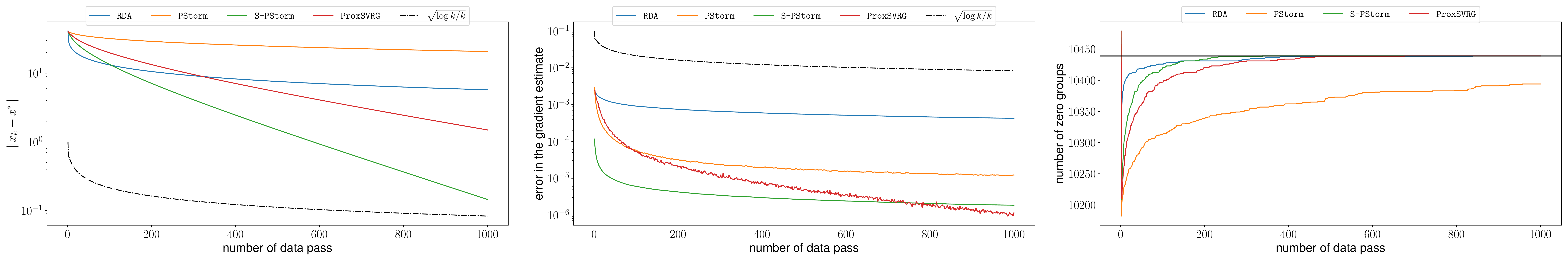}
         \caption{real-sim}
     \end{subfigure}
    }    
    \label{fig:convergence}
\end{figure*}

\newpage
Finally, we provide the raw data for the metrics of total identification (\autoref{tab:total_id}), first identification (\autoref{tab:first_id}), first consistent identification (\autoref{tab:first_cst_id}), and the last iterate support recovery (\autoref{tab:last_nz}); for an explanation of their precise meaning, revisit Section~\ref{sec:num-results-metrics}. All results (excluding \farsagroup{} which is a deterministic algorithm) are reported as the average of three independent runs. In all tables, the problem instance is formatted as (dataset name)-(value of $\Lambda$)-(ratio of \# of groups).

We remark that NaN represents that a particular method failed to identify the support within 1000 data pass. We also removed the instances that all five methods failed to identify the support. 

\begin{table}[!h]
\centering
\caption{Total number of support identifications.}
\label{tab:total_id}
\scalebox{0.8}{
\begin{tabular}{lrrrrr}
\toprule
              instance & \proxsvrg{} & \saga{} & \rda{} & \pstorm{} & \spstorm \\
\midrule
          a9a-0.1-0.25 &       976.0 &   987.0 & 1000.0 &     899.0 &    988.0 \\
           a9a-0.1-0.5 &       790.0 &   893.0 &   75.0 &       NaN &    895.0 \\
          a9a-0.1-0.75 &       932.0 &   965.0 &  998.0 &     547.0 &    966.0 \\
           a9a-0.1-1.0 &       932.0 &   965.0 &  998.0 &     547.0 &    966.0 \\
         a9a-0.01-0.25 &         NaN &     NaN &    NaN &       NaN &     93.0 \\
          a9a-0.01-0.5 &       578.0 &   788.0 &  706.0 &       NaN &    786.0 \\
         a9a-0.01-0.75 &       652.0 &   825.0 &   84.0 &       NaN &    825.0 \\
          a9a-0.01-1.0 &       652.0 &   825.0 &   84.0 &       NaN &    825.0 \\
      covtype-0.1-0.25 &       998.0 &   999.0 & 1000.0 &     998.0 &   1000.0 \\
       covtype-0.1-0.5 &      1000.0 &   999.0 & 1000.0 &     999.0 &   1000.0 \\
      covtype-0.1-0.75 &      1000.0 &   999.0 & 1000.0 &     999.0 &   1000.0 \\
       covtype-0.1-1.0 &      1000.0 &   999.0 & 1000.0 &     999.0 &   1000.0 \\
     covtype-0.01-0.25 &      1000.0 &  1000.0 & 1000.0 &    1000.0 &   1000.0 \\
      covtype-0.01-0.5 &      1000.0 &  1000.0 & 1000.0 &    1000.0 &   1000.0 \\
     covtype-0.01-0.75 &       994.0 &   991.0 &  999.0 &     837.0 &   1000.0 \\
      covtype-0.01-1.0 &       994.0 &   991.0 &  999.0 &     837.0 &   1000.0 \\
     phishing-0.1-0.25 &       792.0 &   896.0 &  991.0 &       NaN &    895.0 \\
      phishing-0.1-0.5 &       390.0 &   694.0 &  901.0 &       NaN &    695.0 \\
     phishing-0.1-0.75 &       420.0 &   710.0 &  502.0 &       NaN &    710.0 \\
      phishing-0.1-1.0 &       326.0 &   662.0 &   28.0 &       NaN &    667.0 \\
          w8a-0.1-0.25 &       960.0 &   979.0 &  997.0 &     749.0 &    980.0 \\
           w8a-0.1-0.5 &       906.0 &   952.0 &  891.0 &     120.0 &    954.0 \\
          w8a-0.1-0.75 &       886.0 &   942.0 &  951.0 &       NaN &    942.0 \\
           w8a-0.1-1.0 &       886.0 &   942.0 &  951.0 &       NaN &    942.0 \\
          w8a-0.01-0.5 &       164.0 &   581.0 &    NaN &       NaN &    580.0 \\
         w8a-0.01-0.75 &         NaN &   185.0 &    NaN &       NaN &    195.0 \\
          w8a-0.01-1.0 &         NaN &   185.0 &    NaN &       NaN &    195.0 \\
     real-sim-0.1-0.25 &         NaN &     NaN &    NaN &       NaN &    212.0 \\
      real-sim-0.1-0.5 &       326.0 &     NaN &  163.0 &       NaN &    664.0 \\
       news20-0.1-0.25 &        38.0 &     NaN &    NaN &       NaN &     26.0 \\
        news20-0.1-0.5 &         6.0 &     NaN &    NaN &       NaN &    312.0 \\
 url-combined-0.1-0.25 &         4.0 &     NaN &    NaN &       NaN &      2.0 \\
 avazu-app.tr-0.1-0.25 &         6.0 &     NaN &    4.0 &       3.0 &      3.0 \\
  avazu-app.tr-0.1-0.5 &         2.0 &     NaN &    3.0 &       2.0 &      2.0 \\
 avazu-app.tr-0.1-0.75 &         2.0 &     NaN &    2.0 &       2.0 &      2.0 \\
  avazu-app.tr-0.1-1.0 &         NaN &     NaN &    2.0 &       1.0 &      1.0 \\
avazu-app.tr-0.01-0.25 &         6.0 &     NaN &    3.0 &       NaN &      3.0 \\
 avazu-app.tr-0.01-0.5 &         2.0 &     NaN &    2.0 &       NaN &      2.0 \\
avazu-app.tr-0.01-0.75 &         2.0 &     NaN &    1.0 &       NaN &      2.0 \\
 avazu-app.tr-0.01-1.0 &         NaN &     NaN &    2.0 &       NaN &      1.0 \\
\bottomrule
\end{tabular}
}
\end{table}

\begin{table}
\centering
\caption{First support identification.}
\label{tab:first_id}
\scalebox{0.8}{
\begin{tabular}{lrrrrr}
\toprule
         instance & \proxsvrg{} & \saga{} & \rda{} & \pstorm{} & \spstorm{} \\
\midrule
          a9a-0.1-0.25 &     25.0 &     14.0 &   1.0 &  102.0 &     13.0 \\
           a9a-0.1-0.5 &    211.0 &    108.0 & 660.0 &    NaN &    106.0 \\
          a9a-0.1-0.75 &     69.0 &     36.0 &   3.0 &  454.0 &     35.0 \\
           a9a-0.1-1.0 &     69.0 &     36.0 &   3.0 &  454.0 &     35.0 \\
         a9a-0.01-0.25 &      NaN &      NaN &   NaN &    NaN &    908.0 \\
          a9a-0.01-0.5 &    423.0 &    213.0 & 264.0 &    NaN &    215.0 \\
         a9a-0.01-0.75 &    349.0 &    176.0 &  77.0 &    NaN &    176.0 \\
          a9a-0.01-1.0 &    349.0 &    176.0 &  77.0 &    NaN &    176.0 \\
      covtype-0.1-0.25 &      3.0 &      1.0 &   1.0 &    1.0 &      1.0 \\
       covtype-0.1-0.5 &      1.0 &      1.0 &   1.0 &    1.0 &      1.0 \\
      covtype-0.1-0.75 &      1.0 &      1.0 &   1.0 &    1.0 &      1.0 \\
       covtype-0.1-1.0 &      1.0 &      1.0 &   1.0 &    1.0 &      1.0 \\
     covtype-0.01-0.25 &      1.0 &      1.0 &   1.0 &    1.0 &      1.0 \\
      covtype-0.01-0.5 &      1.0 &      1.0 &   1.0 &    1.0 &      1.0 \\
     covtype-0.01-0.75 &      3.0 &      5.0 &   1.0 &    1.0 &      1.0 \\
      covtype-0.01-1.0 &      3.0 &      5.0 &   1.0 &    1.0 &      1.0 \\
     phishing-0.1-0.25 &    209.0 &    105.0 &  10.0 &    NaN &    106.0 \\
      phishing-0.1-0.5 &    611.0 &    307.0 & 100.0 &    NaN &    306.0 \\
     phishing-0.1-0.75 &    581.0 &    291.0 & 492.0 &    NaN &    291.0 \\
      phishing-0.1-1.0 &    675.0 &    339.0 & 734.0 &    NaN &    334.0 \\
          w8a-0.1-0.25 &     41.0 &     22.0 &   4.0 &  252.0 &     21.0 \\
           w8a-0.1-0.5 &     95.0 &     49.0 & 110.0 &  881.0 &     47.0 \\
          w8a-0.1-0.75 &    115.0 &     59.0 &  38.0 &    NaN &     59.0 \\
           w8a-0.1-1.0 &    115.0 &     59.0 &  38.0 &    NaN &     59.0 \\
          w8a-0.01-0.5 &    837.0 &    420.0 &   NaN &    NaN &    421.0 \\
         w8a-0.01-0.75 &      NaN &    816.0 &   NaN &    NaN &    806.0 \\
          w8a-0.01-1.0 &      NaN &    816.0 &   NaN &    NaN &    806.0 \\
     real-sim-0.1-0.25 &      NaN &      NaN &   NaN &    NaN &    789.0 \\
      real-sim-0.1-0.5 &    675.0 &      NaN & 838.0 &    NaN &    337.0 \\
       news20-0.1-0.25 &    963.0 &      NaN &   NaN &    NaN &    504.0 \\
        news20-0.1-0.5 &    995.0 &      NaN &   NaN &    NaN &    523.0 \\
 url-combined-0.1-0.25 &      9.0 &      NaN &   NaN &    NaN &      5.0 \\
 avazu-app.tr-0.1-0.25 &      3.0 &      NaN &   1.0 &    1.0 &      1.0 \\
  avazu-app.tr-0.1-0.5 &      3.0 &      NaN &   1.0 &    1.0 &      1.0 \\
 avazu-app.tr-0.1-0.75 &      3.0 &      NaN &   1.0 &    1.0 &      1.0 \\
  avazu-app.tr-0.1-1.0 &      NaN &      NaN &   1.0 &    1.0 &      1.0 \\
avazu-app.tr-0.01-0.25 &      3.0 &      NaN &   1.0 &    NaN &      1.0 \\
 avazu-app.tr-0.01-0.5 &      3.0 &      NaN &   1.0 &    NaN &      1.0 \\
avazu-app.tr-0.01-0.75 &      3.0 &      NaN &   2.0 &    NaN &      1.0 \\
 avazu-app.tr-0.01-1.0 &      NaN &      NaN &   1.0 &    NaN &      1.0 \\
\bottomrule
\end{tabular}
}
\end{table}

\begin{table}
\centering
\caption{First consistent support identification.}
\label{tab:first_cst_id}
\scalebox{0.8}{
\begin{tabular}{lrrrrr}
\toprule
          instance & \proxsvrg{} & \saga{} & \rda{} & \pstorm{} & \spstorm{} \\
\midrule
          a9a-0.1-0.25 &     25.0 &     14.0 &   1.0 &  102.0 &     13.0 \\
           a9a-0.1-0.5 &    211.0 &    108.0 &   NaN &    NaN &    106.0 \\
          a9a-0.1-0.75 &     69.0 &     36.0 &   3.0 &  454.0 &     35.0 \\
           a9a-0.1-1.0 &     69.0 &     36.0 &   3.0 &  454.0 &     35.0 \\
         a9a-0.01-0.25 &      NaN &      NaN &   NaN &    NaN &    908.0 \\
          a9a-0.01-0.5 &    423.0 &    213.0 & 299.0 &    NaN &    215.0 \\
         a9a-0.01-0.75 &    349.0 &    176.0 &   NaN &    NaN &    176.0 \\
          a9a-0.01-1.0 &    349.0 &    176.0 &   NaN &    NaN &    176.0 \\
      covtype-0.1-0.25 &      3.0 &      3.0 &   1.0 &   11.0 &      1.0 \\
       covtype-0.1-0.5 &      1.0 &      3.0 &   1.0 &    5.0 &      1.0 \\
      covtype-0.1-0.75 &      1.0 &      3.0 &   1.0 &    5.0 &      1.0 \\
       covtype-0.1-1.0 &      1.0 &      3.0 &   1.0 &    5.0 &      1.0 \\
     covtype-0.01-0.25 &      1.0 &      1.0 &   1.0 &    1.0 &      1.0 \\
      covtype-0.01-0.5 &      1.0 &      1.0 &   1.0 &    1.0 &      1.0 \\
     covtype-0.01-0.75 &     29.0 &     24.0 &   8.0 &  879.0 &      1.0 \\
      covtype-0.01-1.0 &     29.0 &     24.0 &   8.0 &  879.0 &      1.0 \\
     phishing-0.1-0.25 &    209.0 &    105.0 &  10.0 &    NaN &    106.0 \\
      phishing-0.1-0.5 &    611.0 &    307.0 & 100.0 &    NaN &    306.0 \\
     phishing-0.1-0.75 &    581.0 &    291.0 & 520.0 &    NaN &    291.0 \\
      phishing-0.1-1.0 &    675.0 &    339.0 & 997.0 &    NaN &    334.0 \\
          w8a-0.1-0.25 &     41.0 &     22.0 &   4.0 &  252.0 &     21.0 \\
           w8a-0.1-0.5 &     95.0 &     49.0 & 110.0 &  881.0 &     47.0 \\
          w8a-0.1-0.75 &    115.0 &     59.0 &  65.0 &    NaN &     59.0 \\
           w8a-0.1-1.0 &    115.0 &     59.0 &  65.0 &    NaN &     59.0 \\
          w8a-0.01-0.5 &    837.0 &    420.0 &   NaN &    NaN &    421.0 \\
         w8a-0.01-0.75 &      NaN &    816.0 &   NaN &    NaN &    806.0 \\
          w8a-0.01-1.0 &      NaN &    816.0 &   NaN &    NaN &    806.0 \\
     real-sim-0.1-0.25 &      NaN &      NaN &   NaN &    NaN &    789.0 \\
      real-sim-0.1-0.5 &    675.0 &      NaN & 838.0 &    NaN &    337.0 \\
       news20-0.1-0.25 &    963.0 &      NaN &   NaN &    NaN &      NaN \\
        news20-0.1-0.5 &    995.0 &      NaN &   NaN &    NaN &    523.0 \\
 url-combined-0.1-0.25 &      9.0 &      NaN &   NaN &    NaN &      5.0 \\
 avazu-app.tr-0.1-0.25 &      3.0 &      NaN &   1.0 &    1.0 &      1.0 \\
  avazu-app.tr-0.1-0.5 &      3.0 &      NaN &   1.0 &    1.0 &      1.0 \\
 avazu-app.tr-0.1-0.75 &      3.0 &      NaN &   1.0 &    1.0 &      1.0 \\
  avazu-app.tr-0.1-1.0 &      NaN &      NaN &   1.0 &    1.0 &      1.0 \\
avazu-app.tr-0.01-0.25 &      3.0 &      NaN &   4.0 &    NaN &      1.0 \\
 avazu-app.tr-0.01-0.5 &      3.0 &      NaN &   4.0 &    NaN &      1.0 \\
avazu-app.tr-0.01-0.75 &      3.0 &      NaN &   2.0 &    NaN &      1.0 \\
 avazu-app.tr-0.01-1.0 &      NaN &      NaN &   1.0 &    NaN &      1.0 \\
\bottomrule
\end{tabular}
}
\end{table}

\begin{table}
\centering
\caption{Last iterate sparsity.}
\label{tab:last_nz}
\scalebox{0.8}{
\begin{tabular}{lrrrrrr}
\toprule
              instance & FaRSAGroup & \proxsvrg{} & \saga{} &   \rda{} & \pstorm{} & \spstorm{} \\
\midrule
          a9a-0.1-0.25 &         26 &        26.0 &    26.0 &     26.0 &      26.0 &       26.0 \\
           a9a-0.1-0.5 &         57 &        57.0 &    57.0 &     56.0 &      56.0 &       57.0 \\
          a9a-0.1-0.75 &         86 &        86.0 &    86.0 &     86.0 &      86.0 &       86.0 \\
           a9a-0.1-1.0 &        117 &       117.0 &   117.0 &    117.0 &     117.0 &      117.0 \\
         a9a-0.01-0.25 &         20 &        19.0 &    19.0 &     18.0 &      16.0 &       20.0 \\
          a9a-0.01-0.5 &         44 &        44.0 &    44.0 &     44.0 &      38.0 &       44.0 \\
         a9a-0.01-0.75 &         65 &        65.0 &    65.0 &     66.0 &      58.0 &       65.0 \\
          a9a-0.01-1.0 &         96 &        96.0 &    96.0 &     97.0 &      89.0 &       96.0 \\
      covtype-0.1-0.25 &         11 &        11.0 &    11.0 &     11.0 &      11.0 &       11.0 \\
       covtype-0.1-0.5 &         25 &        25.0 &    25.0 &     25.0 &      25.0 &       25.0 \\
      covtype-0.1-0.75 &         38 &        38.0 &    38.0 &     38.0 &      38.0 &       38.0 \\
       covtype-0.1-1.0 &         52 &        52.0 &    52.0 &     52.0 &      52.0 &       52.0 \\
     covtype-0.01-0.25 &         10 &        10.0 &    10.0 &     10.0 &      10.0 &       10.0 \\
      covtype-0.01-0.5 &         22 &        22.0 &    22.0 &     22.0 &      22.0 &       22.0 \\
     covtype-0.01-0.75 &         33 &        33.0 &    33.0 &     33.0 &      33.0 &       33.0 \\
      covtype-0.01-1.0 &         47 &        47.0 &    47.0 &     47.0 &      47.0 &       47.0 \\
     phishing-0.1-0.25 &         12 &        12.0 &    12.0 &     12.0 &       9.0 &       12.0 \\
      phishing-0.1-0.5 &         25 &        25.0 &    25.0 &     25.0 &      22.0 &       25.0 \\
     phishing-0.1-0.75 &         43 &        43.0 &    43.0 &     43.0 &      41.0 &       43.0 \\
      phishing-0.1-1.0 &         59 &        59.0 &    59.0 &     59.0 &      55.0 &       59.0 \\
          w8a-0.1-0.25 &         57 &        57.0 &    57.0 &     57.0 &      57.0 &       57.0 \\
           w8a-0.1-0.5 &        132 &       132.0 &   132.0 &    132.0 &     132.0 &      132.0 \\
          w8a-0.1-0.75 &        208 &       208.0 &   208.0 &    208.0 &     206.0 &      208.0 \\
           w8a-0.1-1.0 &        281 &       281.0 &   281.0 &    281.0 &     279.0 &      281.0 \\
          w8a-0.01-0.5 &         79 &        79.0 &    79.0 &     77.0 &      67.0 &       79.0 \\
         w8a-0.01-0.75 &        150 &       149.0 &   150.0 &    149.0 &     129.0 &      150.0 \\
          w8a-0.01-1.0 &        214 &       213.0 &   214.0 &    213.0 &     189.0 &      214.0 \\
     real-sim-0.1-0.25 &       5211 &      5210.0 &     NaN &   5210.0 &    5187.0 &     5211.0 \\
      real-sim-0.1-0.5 &      10439 &     10439.0 &     NaN &  10439.0 &   10394.0 &    10439.0 \\
       news20-0.1-0.25 &     338697 &    338697.0 &     NaN & 338661.0 &  338674.0 &   338698.0 \\
        news20-0.1-0.5 &     677496 &    677496.0 &     NaN & 677459.0 &  677464.0 &   677496.0 \\
 url-combined-0.1-0.25 &     807983 &    807983.0 &     NaN & 807982.0 &  807974.0 &   807983.0 \\
 avazu-app.tr-0.1-0.25 &     249995 &    249995.0 &     NaN & 249995.0 &  249995.0 &   249995.0 \\
  avazu-app.tr-0.1-0.5 &     499993 &    499993.0 &     NaN & 499993.0 &  499993.0 &   499993.0 \\
 avazu-app.tr-0.1-0.75 &     749990 &    749990.0 &     NaN & 749990.0 &  749990.0 &   749990.0 \\
  avazu-app.tr-0.1-1.0 &     999988 &    999987.0 &     NaN & 999988.0 &  999988.0 &   999988.0 \\
avazu-app.tr-0.01-0.25 &     249980 &    249980.0 &     NaN & 249980.0 &  249973.0 &   249980.0 \\
 avazu-app.tr-0.01-0.5 &     499978 &    499978.0 &     NaN & 499979.0 &  499970.0 &   499978.0 \\
avazu-app.tr-0.01-0.75 &     749976 &    749976.0 &     NaN & 749976.0 &  749972.0 &   749976.0 \\
 avazu-app.tr-0.01-1.0 &     999973 &    999814.0 &     NaN & 999973.0 &  999965.0 &   999973.0 \\
\bottomrule
\end{tabular}
}
\end{table}

\end{document}